\numberwithin{equation}{section}
\newcommand{\qed}{\hfill$\blacksquare$}
\newcommand{\R}{\mathbb{R}}
\def\im{\mathop{\rm im}\nolimits}
\def\ind{\mathop{\rm ind}\nolimits}
\newcommand{\spanop}[1]{{\mathop{\rm span({#1})}}}
\newtheorem{thm}{Theorem}[section]
\newtheorem{lem}[thm]{Lemma}
\newtheorem{prop}[thm]{Proposition}
\newtheorem{define}[thm]{Definition}
\newtheorem{example}[thm]{Example}
\newtheorem{remark}[thm]{Remark}
\newtheorem{remarks}[thm]{Remarks}
\newtheorem{conjecture}[thm]{Conjecture}
\newenvironment{proof}{{\bf Proof.}  }{\hfill$\blacksquare$}
\newcommand{\g}{\gamma}
\newcommand{\e}{\epsilon}
\newcommand{\1}{\mathbf{1}}
\newcommand{\0}{\mathbf{0}}
\newcommand{\av}[1]{\left|{#1}\right|}
\renewcommand{\l}{\left}
\renewcommand{\r}{\right}
\newcommand{\be}{\begin{enumerate}}
\newcommand{\bi}{\begin{itemize}}
\newcommand{\ee}{\end{enumerate}}
\newcommand{\ei}{\end{itemize}}
\newcommand{\G}{{\Gamma}}
\newcommand{\Gp}{{\G_+}}
\newcommand{\Gm}{{\G_-}}
\renewcommand{\P}{\mathbb{P}}
\renewcommand{\phi}{\varphi}
\renewcommand{\L}{\mathcal{L}}
\newcommand{\M}{\mathcal{M}}
\newcommand{\ST}{{\mathcal{ST}}}
\newcommand{\tstar}{t^\star}
\begin{document}
\title{Spectral Theory for Networks with Attractive and Repulsive
  Interactions} \date{\today}

\author{Jared C. Bronski \\ University of Illinois \and Lee DeVille \\
  University of Illinois \\ }

\maketitle

\begin{abstract}
There is a wealth of applied problems that can be posed as a dynamical
system defined on a network with both attractive and repulsive
interactions. Some examples include: understanding synchronization
properties of nonlinear oscillator;, the behavior of groups, or
cliques, in social networks; the study of optimal convergence for
consensus algorithm; and many other examples.  Frequently the
problems involve computing the index of a matrix,~i.e. the number of
positive and negative eigenvalues, and the dimension of the kernel. In
this paper we consider one of the most common examples, where the
matrix takes the form of a signed graph Laplacian. We show that the
there are topological constraints on the index of the Laplacian matrix
related to the dimension of a certain homology group. In certain
situations, when the homology group is trivial, the index of the
operator is rigid and is determined only by the topology of the
network and is independent of the strengths of the interactions. In
general these constraints give upper and lower bounds on the number of
positive and negative eigenvalues, with the dimension of the homology
group counting the number of eigenvalue crossings. The homology group
also gives a natural decomposition of the dynamics into ``fixed''
degrees of freedom, whose index does not depend on the edge-weights,
and an orthogonal set of ``free'' degrees of freedom, whose index
changes as the edge weights change. We also present some numerical
studies of this problem for large random matrices.

\end{abstract}

\section{Introduction}\label{sec:intro}

\subsection{Problem Formulation}\label{sec:probform}
There are many applied problems that can ultimately be reduced to 
the question of understanding a dynamical problem on a network or graph. 
In these applications it is often important to understand the dynamical 
behavior of the evolution in terms of the topological properties of the graph. 

In this paper we consider a simple, connected, undirected edge-weighted 
graph $\G =(V(\G), E(\G))$ with vertex set $V(\Gamma)$ and edge set 
$E(\Gamma)$.  For each edge in $E(\Gamma)$ connecting vertex $i$ with 
vertex $j$ we associate a weight $\gamma_{ij},$ which is assumed to be 
non-zero but may take either sign. If there is no edge connecting 
vertices $i$ and $j$ the weight $\gamma_{ij}$ is understood to 
be zero.

For such a graph we define the {\em signed Laplacian matrix} $\L(\G)$ by
\begin{equation}\label{eq:Laplacian}
  \L(\G)_{ij} = \begin{cases} \g_{ij}, & i\neq j,\\
      -\sum_{k \neq i}\gamma_{ik},& i = j.\end{cases}   
\end{equation}
Note that  $\L(\G)$ is symmetric, so all eigenvalues of $\L(\G)$ are
real.
If the weights are all positive, $\gamma_{ij}>0,$ this is a standard 
graph Laplacian and as such the spectrum of ${\mathcal L}$ is 
well-understood: $\mathcal L$ is a negative semi-definite matrix, with the 
dimension of the kernel equal to the number of connected components of the 
graph $\G$. In many applications the weights are not guaranteed to be 
positive. In this case the matrix is no longer definite, and we are interested 
in determining the number of positive,
zero, and negative eigenvalues of $\L(\G)$, which we denote as
$n_+(\G), n_0(\G), n_-(\G)$ respectively.  

In particular, we would like to determine {\em topological} bounds on
these indices, i.e. conditions that depend only on the arrangement of
signs of the edge weights $\gamma_{ij}$, and not on their
magnitudes. We are aware of at least five applied problems that
motivate this question:

\begin{enumerate}

\item {\bf Stability of Fixed Points} Consider any dynamical system
  defined on the network.  Specifically given a graph $\G$, and
  symmetric coupling functions $\varphi_{ij}(\cdot) = \varphi_{ji}(\cdot)$
  associated to edges in the graph we define a dynamical system as
  follows
  \begin{equation}\label{eq:network}
    \frac{d}{dt} x_i = F_i({\mathbf x}) :=  \omega_i + \sum_{j\in\G}\varphi_{ij}(x_j-x_i).
  \end{equation}
  A well-studied example of this type of dynamical system is the
  Kuramoto oscillator~\cite{Kuramoto.86, Kuramoto.book, Kuramoto.91,
    S, Acebron.etal.05}, where we choose $\varphi_{ij}(\cdot) =
  \gamma_{ij} \sin(\cdot)$.
  
  To compute the stability index of a fixed point for the
  system~\eqref{eq:network}, i.e. a vector $\mathbf x$ with
  $F_i(\mathbf x) = 0$ for all $i$, we need to determine the index of
  the Jacobian $J$, where
  \begin{equation*}
    J_{ij} = \begin{cases} \varphi_{ij}'(x_j-x_i),& i\neq j,\\
       -\sum_{k} \varphi_{ik}'(x_k-x_i), & i = j.\end{cases}
  \end{equation*}
  The Jacobian $J$ is a graph Laplacian of the
  form~\eqref{eq:Laplacian}; thus, determining the stability indices
  for fixed points of~\eqref{eq:network} is equivalent to the problem
  studied here~\cite{Mirollo.Strogatz.2005, VO1, VO2, MS1, MS2,
    Bronski.DeVille.Park.2012}. When studying this dynamical system,
  the first object of study is always the stable points.  But, for
  example, if we consider such a system perturbed by a small
  stochastic process, then gaining a qualitative understanding of the
  dynamics requires that we identify all of the 1-saddles, i.e.  those
  points which are unstable but with exactly one unstable direction
  (e.g. see~\cite{NL-DV-2012}).

  The $\varphi'_{ij}$ terms in $J$ can be of either sign, making it
  natural to consider the case of a graph Laplacian with arbitrary
  signs on the weights. For a generic choice of coupling functions,
  $\varphi'_{ij}(x_i-x_j)$ is non-zero for all $i,j$, implying that
  the graph determining $J$ and the graph defined by the original
  interactions in~\eqref{eq:network} have the same underlying
  topology.

%
%

\item {\bf Stability of Neural Networks} If we consider any neural
  network system with both ``positive'' and ``negative feedbacks'',
  then the stability analysis reduces to a eigenvalue problem similar
  to ~\eqref{eq:Laplacian}. In biological applications the interaction
  strengths are both difficult to measure experimentally and (due to
  neural plasticity) very changeable. This makes it impractical to
  estimate the magnitudes of $\gamma_{ij}$ with any degree of
  reliability.  On the other hand the nature of the interaction
  (excitatory or inhibitory) is anatomical and, in general, will not
  change. Thus we have reliable experimental data on the signs of
  various connections, but very little reliable information on the
  magnitudes. This makes the idea of understanding the extent to which
  the dynamics is determined by the topology of the network a very
  attractive one.

\item {\bf Convergence of Consensus Algorithms} It was pointed out by
  Xiao and Boyd~\cite{Xiao.Boyd.04} that for some graphs, the optimal
  choice of weights for the convergence of a consensus algorithm uses
  negative weights.  In particular, given a weighted graph $\G =
  \{\gamma_{ij}\}$, define the {\em linear discrete-time consensus
    algorithm} given by
  \begin{equation}\label{eq:consensus}
    x_i(t+1) := \sum_j \gamma_{ij} x_j(t).
  \end{equation}
  The question posed in~\cite{Xiao.Boyd.04} is how one might optimize
  the choice of $\gamma_{ij}$, given the underlying graph topology and
  the constraint that $\1$ be a stable fixed point
  of~\eqref{eq:consensus}, to obtain the most rapid convergence to
  consensus (to wit, to make the Lyapunov exponent
  of~\eqref{eq:consensus} as small as possible).  It was observed
  there that there exist examples of graphs where the optimal choice
  involves negative weights---that is to say, that there are choices
  of weights that make~\eqref{eq:consensus} converge more rapidly than
  the most rapidly-mixing Markov chain.  This observation inspired a
  deluge of work (examples include~\cite{Hatano.Mesbahi.05,
    Olfati-Saber.etal.07, Kashyap.Basar.Srikant.07, Xiao.Boyd.Kim.07})
  on this fast convergence problem and it has been observed that the
  need for negative weights is typical in many contexts.

  Of course, one obvious constraint on determining the optimal choice
  of weights is that $n_+(\G) = 0$ so that we obtain convergence at
  all.  In the work mentioned above, this was always obtained by
  solving a semidefinite programming problem over the set of all
  weights associated to a particular unweighted graph $\G$.  The
  results in the current work give topological
  (i.e. weight-independent) bounds for $n_+(\G)$.

\item {\bf Clustering in Social Networks} Questions of this sort arise
  in the study of social networks.  In the classic work of Hage and
  Harary~\cite{Hage.Harary.book}, matrices of the
  form~\eqref{eq:Laplacian} modeled the interaction of tribal groups
  within an alliance in New Guinea.\footnote{It should be noted that
    Hage and Harary were concerned with the question of balance in
    signed graphs, a very different question from the ones we consider
    here.} In this work the underlying graph has sixteen vertices,
  representing the different tribal units, with edges represent
  relations between different tribal groups.  These relations can be
  friendly (``rova'') or antagonistic (``hina'') corresponding to
  $\gamma_{ij}>0$ and $\gamma_{ij}<0$ respectively. Another
  anthropological example is the SlashDot Zoo, a social network
  associated to the website SlashDot~\cite{SlashDot}.  On this website
  participants can label each other as friend or foe.\footnote{In this
    example the edges are directed, leading to a non-symmetric
    Laplacian matrix.  As we do not consider non-symmetric graphs in
    this work, we will need to consider a symmetrized version of the
    SlashDot Zoo in the applications below.  However, many social
    networks (e.g. Facebook) are symmetric social networks by
    definition.}  In the context of social network models the index
  $n_+(\G)$ indicates the tendency of the network to separate into
  mutually antagonistic subgroups.

\item{\bf Resistor Networks with Negative Resistance} One intuitively
  appealing way to think about a signed graph Laplacian is as a
  network of resistors where some of the edges have negative
  resistance.  While perfect negative resistors cannot be implemented
  with passive components there are many nonlinear components whose
  current-voltage curves are non-monotone, and have a region where the
  current is a decreasing function of the voltage. In the region where
  the current is a decreasing function of current the component has
  negative (differential) resistance.  Some examples of such
  components include tunnel and Gunn diodes, neon lamps, and certain
  kinds of tubes. Circuits built from these components often exhibit
  multi-stability, and the stability of a given solution is governed
  by a system of the form \eqref{eq:Laplacian}.  Alternatively a
  negative resistance can be implemented using active circuit elements
  with gain, such as an operational amplifier (op-amp).
\end{enumerate}

On a somewhat different note we should remark that signed graphs and
signed Laplacians arise very naturally in knot theory.  One classical
result of this kind is a procedure for associating a quadratic form to
a knot originally due to Goeritz~\cite{Georitz.1933,
  Gordon.Litherland.1978}. In Goeritz's construction one considers a
planar projection of the knot, with regions of plane alternately
colored black and white. Two regions of given color are connected by
an edge if they share a crossing. The weight attached to the edge is
$+1$ if the crossing is left-handed and $-1$ if the crossing is
right-handed.  The reduced determinant --- the product over the
non-zero eigenvalues of the associated graph Laplacian --- can be
shown to be a knot invariant.  Related results include a construction
by Kauffman of a Tutte polynomial for signed graphs~\cite{kauffman.89}
which specializes to known invariants like the Jones~\cite{jones.85}
and Kauffman bracket polynomials~\cite{kauffman.87}. This construction
was further generalized to matroids by
Zaslavsky~\cite{zaslavsky.92}. Signed graphs and their Laplacians have
also been studied in the graph theory community independent of their
connection to knot theory (see~\cite{Zaslavsky.82, Zaslavsky.coloring,
  Zaslavsky.02} and following, also~\cite{Hou.05}) but the the reader
should be aware that there are several different generalizations of
the Laplacian to the case where negatively weighted edges are
allowed. In one variation definition the diagonal entries are taken to
be minus the sum of the absolute values of the edge weights. In this
case the matrix is typically not zero-sum, and is negative
semi-definite.  Clearly the spectral questions are different and there
is no obvious correspondence between the two.

To make clear the kinds of questions we want to ask, and the kind of phenomena we would like 
to understand, we begin with an example of two networks. 

\begin{figure}[ht]
\begin{centering}
\includegraphics[height=2.5in]{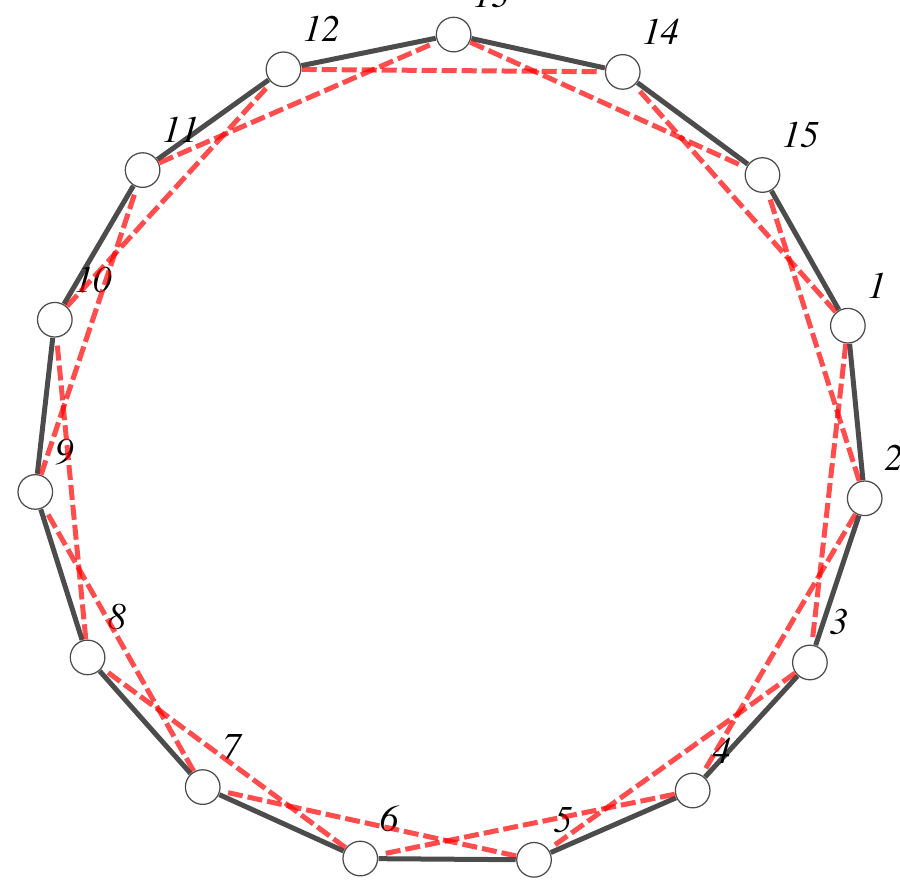} \includegraphics[height=2.5in]{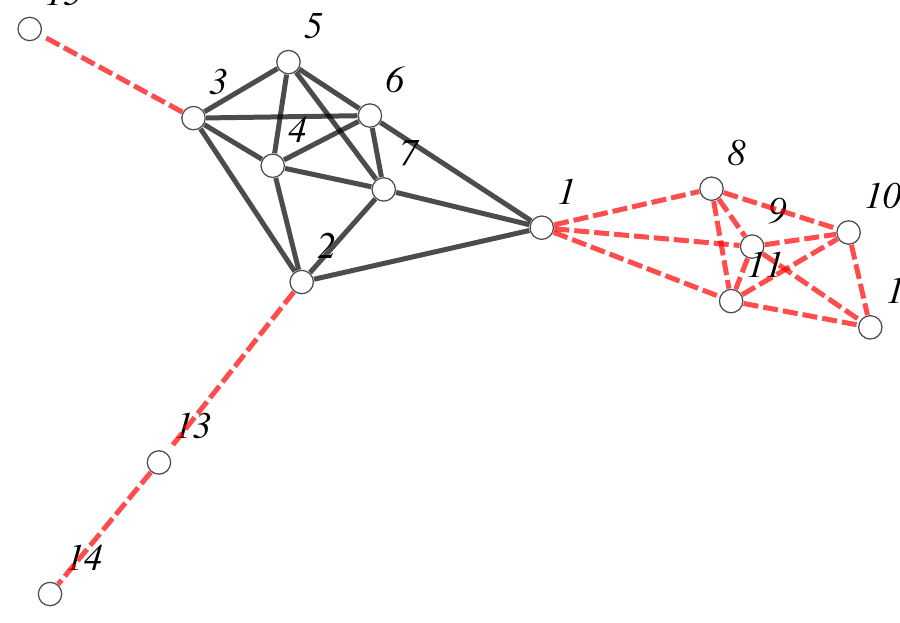}
\caption{Two graphs on 15 nodes.  Here we are represented positively
  weighted edges with solid black lines, and negatively weighted edges
  by dashed red lines. }
\label{fig:motivate}
\end{centering}
\end{figure}
 
\begin{example}
  Figure \ref{fig:motivate} depicts two signed graphs representing
  networks.  in this figure dashed lines (red online) represent
  negative/excitatory/hina interactions and the solid line represent
  positive/inhibitory/rova interactions. Each of these graphs has
  fifteen vertices, fifteen hina edges and fifteen rova edges. The
  dynamics, however, are typically very different. We claim that there
  is a marked difference between the way mathematicians tend to view
  these problems and the way other network scientists view them, and
  that these differences are summarized in the graphs above. The first
  graph is more symmetric, having as its automorphism group (in the
  absence of weights) the dihedral group $D_{15}$. The automorphism
  group of the second graph is trivial, consisting of only the
  identity. Since symmetry reduction is one of the most common and
  most powerful techniques in applied mathematics this suggests that
  the dynamics of the first network should be easier to
  understand. Many network scientists, however, would claim that the
  second network is simpler and easier to understand. They would argue
  that this network can clearly be separated into four functional
  units. The largest functional unit consists of vertices one through
  seven, which interact through mutual inhibition. Hanging off of this
  structure are three small units: the first consisting of vertices
  one and eight through twelve; the second of vertices two, thirteen
  and fourteen; and the third of vertices three and fifteen. These
  vertices in these smaller units act on each other via excitation and
  couple to the main unit through the shared vertices. The goal of
  this paper is to mathematize this intuition about the structure of
  the network. We will show that, at least for the kinds of questions
  asked in this paper, the second point of view is the more fruitful
  and amounts to an observation about a particular homology group of
  the graph. The first network has the property that a certain
  homology group has the maximum possible dimension. This, as we will
  show, implies that the spectrum of the Laplacian is essentially
  arbitrary: it has one zero eigenvalue, and the signs of the other
  fourteen eigenvalues can be chosen arbitrarily with an appropriate
  choice of weights. In the second network, on the other hand, the
  analogous homology group is trivial. This implies that the spectrum
  is rigid: the Laplacian always has eight negative eigenvalues, one
  zero eigenvalue, and six positive eigenvalues regardless of the
  choice of weights on the edges.\footnote{Note that there is no
    obvious relationship between the size of the automorphism group
    and the size of the homology group. In these examples they are
    both large or both small, but it is easy to construct examples
    where one is small and the other large.}
\end{example}

In this paper we give the best possible bounds on
$n_{-}(\G),n_{0}(\G),n_{+}(\G)$ involving only topological information
--- connectivity of the graph and the sign information on the edge
weights. For a graph with $N$, vertices the difference between the
upper and lower bounds is an integer that can vary between $0$ and
$N-1$, depending on the topology of the graph.  This integer
represents the dimension of a certain homology group, and counts the
number of possible eigenvalue crossings from the left to the right
half-plane. The examples above represent extreme cases. In the first
example the homology group has fourteen generators and there are
fourteen possible eigenvalue crossings. In the second example the
homology group is trivial, consisting only of the zero element, and
there are no eigenvalue crossings. This homological construction gives
a natural splitting of the vector space into a ``fixed'' subspace,
where there cannot be an eigenvalue crossing, and a ``free'' subspace,
where all of the eigenvalue crossings occur. We also show that these
bounds are strictly better than those implied by the Gershgorin
theorem. Finally we will conclude with some numerical experiments and
examples.

\section{Main Theorem}\label{sec:main}

\subsection{Preliminaries}\label{sec:gt}

The main theorem, Theorem \ref{thm:main}, gives tight upper and 
lower bounds on the number of positive, negative, and zero eigenvalues. 
To state the main theorem of the paper, we first present a few
definitions. 

\begin{define}
 Given a graph $\G$, we define the two subgraphs $\Gp$
  (resp.~$\Gm$) to be the subgraphs with the same vertex set as $\G$
  ($V(\G)=V(\Gp)=V(\Gm)$) together with the edges of positive
  (resp. negative) weights:
  \begin{equation*}
    E(\Gp) = \{ \e \in E(\G) | \gamma_{ij} >0\} ,\quad E(\Gm) = \{ \e \in E(\G) | \gamma_{ij} <0\}.
  \end{equation*}
  Moreover, these graphs inherit the weighting from the original $\G$,
  i.e.
  \begin{equation*}
    \l(\Gp\r)_{ij} = \max(\gamma_{ij},0),\qquad \l(\Gm\r)_{ij} = \min(\gamma_{ij},0).
  \end{equation*}
We further adopt the following definitions and notations:
\begin{itemize}

\item We use the notation $c(\cdot)$ to represent the number of
  components of a graph. 

\item We let $\Gamma_+^{(i)}$, $i \in (1\ldots c(\Gp))$ denote the
  $i^{th}$ component of $\Gp$, and similarly $\Gamma_-^{(i)}$.

\item Given a graph $\G$ we let $\ST(\G)$ denote the set of all
  spanning trees of $G$. More generally we let $\ST_k(\G)$ denote the
  set of all spanning trees of $\G$ having exactly $k$ edges in
  $\Gm$. Note that $\cup_{k=0}^{N-1} {\mathcal ST}_k = {\mathcal ST}$
  and ${\mathcal ST}_k \cap {\mathcal ST}_{k'} = \emptyset$ if $k \neq
  k'$.

\item We define the {\em flexibility} of a weighted graph as the
  number
  \begin{equation*}
    \tau(\G) := \av{V(\G)} - c(\Gm) - c(\Gp) + 1.
  \end{equation*}
  If $\tau(\G) = 0$, then we say that $\G$ is {\em rigid}.  We show
  below that the flexibility is always a non-negative number.

\item As mentioned above, for any weighted graph $\G$, we define the three
  indices $n_0(\G), n_-(\G), n_+(\G)$ as the number of zero, negative,
  and positive eigenvalues of $\L(\G)$. We will occasionally use the notation ${\rm ind}(\Gamma)$ 
to represent the triple ${\rm ind}(\Gamma) = \left(n_-(\G), n_0(\G), n_+(\G)\right).$

\end{itemize}
\end{define}

\begin{remark}
  Any graph Laplacian as defined in~\eqref{eq:Laplacian} has all row
  sums equal to zero, so that $\L(\G)\1 = \0$, and one necessarily has
  $n_0(\G)\ge 1$.  It is well-known\cite{Cvetkovic.etal.book,
    Chung.book} that if all the weights $\g_{ij}\ge 0$, then the graph
  Laplacian is negative semi-definite, with $n_0(\G)=c(\G)$ and thus
  $n_- = |V(\G)| - c(\G).$ In particular, if $\G$ is connected with
  positive weights, then
  \begin{equation*}
    n_+(\G) = 0,\quad n_0(\G) = 1,\quad n_-(\G) = \av{V(\G)} - 1.
  \end{equation*}
  This is no longer true when the edge-weights are allowed to be
  negative --- the Laplacian matrix of a connected graph can have
  multiple zero and positive eigenvalues.  
\end{remark}

\begin{lem}\label{lem:flexibility}
Every signed graph $\G$ satisfies the inequality
\begin{equation}\label{eq:deflem}
  c(\Gp) + c(\Gm) \leq |V(\G)|+c(\G).
\end{equation}
If $\G$ is connected, writing $|V(\G)|=N$ gives
\begin{equation*}
  c(\Gp) + c(\Gm) \leq N + 1.
\end{equation*}
From this, it follows that the flexibility $\tau(\G)$ of any graph is a non-negative
integer.
\end{lem}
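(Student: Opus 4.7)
The plan is to prove the inequality by a counting argument on spanning forests of the subgraphs $\Gp$ and $\Gm$, and then deduce non-negativity of the flexibility as a corollary.

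First I would rewrite the target inequality in the equivalent form
\begin{equation*}
  (|V(\G)| - c(\Gp)) + (|V(\G)| - c(\Gm)) \;\geq\; |V(\G)| - c(\G).
\end{equation*}
The right-hand side is the number of edges in any spanning forest of $\G$, while each summand on the left is the number of edges in a spanning forest of $\Gp$ or $\Gm$ respectively. So the inequality is really a statement that a spanning forest of $\Gp$ together with a spanning forest of $\Gm$ has at least as many edges as a spanning forest of $\G$.

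To make this precise, I would pick a spanning forest $F_+$ of $\Gp$ and a spanning forest $F_-$ of $\Gm$, and set $F := F_+ \cup F_-$ (edge-disjoint by construction, since each edge of $\G$ has a definite sign). The key structural claim is $c(F) = c(\G)$. One direction is free: $F$ is a spanning subgraph of $\G$, so $c(F) \geq c(\G)$. For the other direction, take any edge $\{i,j\} \in E(\G)$; it lies in either $\Gp$ or $\Gm$, and in either case its endpoints lie in the same component of that subgraph, hence are connected within $F_+$ or $F_-$, and therefore within $F$. Consequently every $\G$-path between two vertices can be replaced by an $F$-walk between the same vertices, yielding $c(F) \leq c(\G)$.

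Once $c(F) = c(\G)$ is established, the proof is a one-line bookkeeping step: any graph $H$ on $|V(\G)|$ vertices satisfies $|E(H)| \geq |V(\G)| - c(H)$, applied to $H=F$ gives
\begin{equation*}
  (|V(\G)|-c(\Gp)) + (|V(\G)|-c(\Gm)) \;=\; |E(F)| \;\geq\; |V(\G)| - c(F) \;=\; |V(\G)| - c(\G),
\end{equation*}
which rearranges to \eqref{eq:deflem}. The special case $c(\G)=1$ then gives $c(\Gp)+c(\Gm) \leq N+1$, and substituting into the definition of $\tau(\G)$ yields $\tau(\G) \geq 0$; integrality is automatic since every term in the definition is an integer. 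The only genuinely nontrivial step is the identity $c(F) = c(\G)$, which is where the edge-disjoint decomposition $E(\G) = E(\Gp) \sqcup E(\Gm)$ is essential — without it the union of spanning forests could fail to reconnect components of $\G$.
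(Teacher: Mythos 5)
Your proof is correct. It differs from the paper's argument in a way worth noting. The paper proves the connected case by forming the quotient graph $\widetilde{\G}$ whose vertices are the components of $\Gp$ and whose edges record the negative edges joining them, taking a spanning tree of $\widetilde{\G}$ (which has $c(\Gp)-1$ edges), lifting it to a subforest of $\Gm$, and concluding $c(\Gm)\le |V|-(c(\Gp)-1)$; the disconnected case is then handled separately by summing over components. Your argument is symmetric in $\Gp$ and $\Gm$: you take spanning forests $F_\pm$ of each, observe that their edge-disjoint union $F$ satisfies $c(F)=c(\G)$ (the nontrivial inclusion following because every edge of $\G$ has its endpoints already joined inside $F_+$ or $F_-$), and then read off the inequality from $|E(F)|\ge |V(\G)|-c(F)$. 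This buys you the general inequality \eqref{eq:deflem} in one stroke, with no case split on connectedness, and it makes transparent that the inequality is exactly the statement that two spanning forests covering all the edge-signs must together span $\G$. What the paper's asymmetric route buys in exchange is the quotient/contraction construction itself, which reappears later in the paper (in the analysis of $S_{\rm fixed}$ via contracting on the negative edges, and in the eigenvalue asymptotics of Section~\ref{sec:spectrum}), so the authors get to introduce that machinery early. Either proof is complete; yours is arguably the cleaner self-contained argument for this lemma in isolation.
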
   

\begin{proof}
  We first note that if $\Gm$ contains a subforest $T$ with $\ell$ edges,
  then $c(\Gm) \le \av{V} - \ell$.

  We now define $\widetilde{\G} = (\widetilde{V},\widetilde{E})$ to a
  quotient graph
  of $\G$, where $\widetilde{V}$ are the connected components of $\Gp$
  and $(a,b)\in\widetilde{E}$ iff there is at least one edge in $\Gm$
  from a vertex in component $a$ to a vertex in component $b$.

  Since $\G$ is connected, so is $\widetilde{\G}$.  Consider any
  spanning tree $\mathcal{T}$ of $\widetilde{\G}$.  By definition,
  this contains $c(\Gp) - 1$ edges, since it is a tree on $c(\Gp)$
  vertices.  Now consider this tree ``lifted'' into $\G$ where, for
  every edge in $\widetilde{\G}$ of the form $a\leftrightarrow b$,
  choose one edge in $E$ that connects component $a$ to component $b$.
  This must be a subforest of $\Gm$, since it cannot contain any
  cycles.  Therefore $c(\Gm) \le \av{V} - (c(\Gp) - 1)$ and we are
  done.

  Finally, if $\G$ is not connected, let $\G^{(1)},\dots,\G^{(c(\G))}$
  be the connected components of $\G$, and define $\G_+^{(i)},
  \G_-^{(i)}$ in the obvious manner.  By assumption, we have that
  \begin{equation*}
    c(\G_+^{(i)}) + c(\G_-^{(i)}) \le \av{\G^{(i)}} + 1
  \end{equation*}
  It is not hard to see that 
  \begin{equation*}
    \sum_{i=1}^{c(\G)} c(\G_+^{(i)}) = c(\Gp),\quad \av{\G} = \sum_{i=1}^{c(\G)}\av{\G^{(i)}},
  \end{equation*}
  and from this~\eqref{eq:deflem} follows.  
\end{proof}

\begin{remark}
  Since $\tau(\Gamma)$ is a counting number, the obvious next thing to
  determine is what it is that it counts.  We will show in
  Section~\ref{sec:MV} from the consideration of a particular
  Mayer-Vietoris sequence that $\tau(\G)$ is the dimension of a
  certain homology group.  This will be useful because it gives a
  relationship between the flexibility of a graph and cycles of a
  certain type.  However, it is convenient to have the above proof,
  which is self-contained and purely graph-theoretic, at the current
  time.
\end{remark}

The main machinery that we will use in this paper is the celebrated
Kirchhoff matrix tree theorem~\cite{kirchhoff.1847,kirchhoff.1847E,Chaiken.82}.   To state it,
we first present some notation, which essentially follows that of Tutte\cite{Tutte}:

\begin{define}
  \begin{itemize}

  \item Let ${T}$ be a tree, then we define $\pi({T})$ to be the product over the edge weights in the tree
  \begin{equation}\label{eq:defofpi}
    \pi({T}) = \prod_{(i,j)\in E({T})} \gamma_{ij}.
  \end{equation}

\item Let $\G$ be a weighted graph with $\av{V(\G)}=N$, and $\L(\G)$
  be its graph Laplacian.  We know that $\L(\G)$ has a zero
  eigenvalue, and therefore $\det(\L(\G))= 0$.  Order the $n$
  eigenvalues of $\L(\G)$ so that $\lambda_1=0$, then we define
  \begin{equation}\label{eq:defofM}
    \M(\G) = \frac{(-1)^{N-1}}N  \prod_{i=2}^N \lambda_i.
  \end{equation}
In other words $\M(\G)$ is (up to the multiplicative prefactor) the linear term 
in the characteristic polynomial of the Laplace matrix. 
Note that $\M(\G) \neq 0$ iff $0$ is a simple
  eigenvalue of $\L(\G)$.  More generally, the multiplicity of a zero of
  $\M(\G)$ is one fewer than the multiplicity of zero in the
  characteristic polynomial of $\L(\G)$. $\M(\G)$ is a reduced
  determinant which is zero iff $\L(\G)$ has a non-simple zero
  eigenvalue.
\end{itemize}
\end{define}

With this notation the Kirchhoff matrix tree theorem can be stated as follows:

\begin{lem}[Weighted Matrix Tree Theorem]\label{lem:mtt}
  Let $\G$ be a connected, weighted graph, and $\ST(\G)$ the
  set of all spanning trees of $\G$.  Then
  \begin{equation}\label{eq:MTT}
    \M(\G) = \sum_{T\in\ST(\G)} \pi(T).
  \end{equation}
\end{lem}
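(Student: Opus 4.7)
The plan is to prove this via the Cauchy--Binet formula applied to an incidence matrix factorization of $\L(\G)$, which is the standard modern proof of the (unsigned) Matrix Tree Theorem, adapted to our sign conventions and the fact that some edge weights are negative. The key observation is that the factorization $\L(\G) = -B D B^T$ goes through verbatim for signed weights, and the orientation chosen for the incidence matrix is completely independent of the signs of the $\gamma_{ij}$.

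First I would pick an arbitrary orientation on each edge of $\G$ and let $B$ be the $N\times \av{E(\G)}$ signed incidence matrix, with $B_{v,e}=+1$ if $v$ is the head of $e$, $-1$ if $v$ is the tail, and $0$ otherwise. Setting $D = \mathrm{diag}(\gamma_e)$, a direct check gives $\L(\G) = -B D B^T$, because each edge $e=(i,j)$ contributes $-\gamma_{ij}(e_i-e_j)(e_i-e_j)^T$, which matches~\eqref{eq:Laplacian} edge by edge. Next I would relate $\M(\G)$ to a principal minor: expanding $\det(\lambda I - \L(\G)) = \lambda \prod_{i=2}^N(\lambda-\lambda_i)$ and reading off the coefficient of $\lambda$ gives
\begin{equation*}
  \prod_{i=2}^N \lambda_i \;=\; (-1)^{N-1}\bigl[\lambda^1\bigr]\det(\lambda I - \L(\G)).
\end{equation*}
The coefficient of $\lambda$ in the characteristic polynomial is $(-1)^{N-1}$ times the sum of $(N-1)\times(N-1)$ principal minors of $\L(\G)$, and since $\L(\G)\1 = \0$ forces all such minors to be equal to a common value $\kappa$ (the shared cofactor of any row-sum-zero matrix), one gets $\M(\G) = (-1)^{N-1}\kappa$.

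Then I would compute $\kappa$ via Cauchy--Binet. Let $B_1$ denote $B$ with its first row deleted; then the $(1,1)$-principal minor of $-\L(\G)$ equals $\det(B_1 D B_1^T)$, so
\begin{equation*}
  \kappa \;=\; (-1)^{N-1}\det(B_1 D B_1^T) \;=\; (-1)^{N-1}\sum_{\substack{S\subseteq E(\G)\\|S|=N-1}} \det(B_1[\,\cdot\,,S])^2 \prod_{e\in S}\gamma_e.
\end{equation*}
I would then invoke the classical incidence-matrix lemma: for any $S\subseteq E(\G)$ with $|S|=N-1$, $\det(B_1[\,\cdot\,,S]) = \pm 1$ if $S$ forms a spanning tree of $\G$, and $0$ otherwise (this follows from a standard induction on leaves of the subgraph induced by $S$, or equivalently from the fact that $B_1$ has full row rank restricted to a set of columns iff those columns span a tree that connects all vertices). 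Substituting and combining the two factors of $(-1)^{N-1}$ gives
\begin{equation*}
  \M(\G) \;=\; \sum_{T\in\ST(\G)} \prod_{e\in T}\gamma_e \;=\; \sum_{T\in\ST(\G)} \pi(T),
\end{equation*}
which is~\eqref{eq:MTT}.

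The routine but error-prone step — and the main thing I would double-check — is the sign bookkeeping, because the paper's Laplacian convention has positive off-diagonals and a negative diagonal, opposite to the usual combinatorial convention, and the definition~\eqref{eq:defofM} of $\M(\G)$ carries an extra factor of $(-1)^{N-1}/N$. Tracking these two minus signs together with the $(-1)^{N-1}$ coming from the coefficient of $\lambda$ in the characteristic polynomial is what makes the final formula come out weight-positive. The allowance of negative $\gamma_e$ causes no trouble for the Cauchy--Binet step itself, since $D$ is a diagonal matrix of scalars and the factorization is purely algebraic; the signs of the weights only reappear in the final product $\pi(T)$.
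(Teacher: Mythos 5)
Your proof is correct, and it is genuinely different from what the paper does: the paper offers no proof at all for Lemma~\ref{lem:mtt}, simply citing Theorem VI.29 of Tutte, whose argument (like the Deletion--Contraction Theorem the authors invoke later) is combinatorial and inductive. Your route is the standard linear-algebraic one via the factorization $\L(\G) = -BDB^T$ and Cauchy--Binet, and its chief virtue here is exactly the one you identify: negative weights cost nothing, since $D$ is just a diagonal matrix of scalars and the incidence-matrix lemma ($\det(B_1[\,\cdot\,,S])=\pm1$ iff $S$ is a spanning tree) is weight-independent. I verified your sign bookkeeping against the paper's conventions and it checks out: with $\L(\G)=-L$ for the usual combinatorial Laplacian $L=BDB^T$, the coefficient of $\lambda$ in $\det(\lambda I - \L)$ is $(-1)^{N-1}\cdot N\kappa$, the factor of $N$ cancels the $1/N$ in~\eqref{eq:defofM}, and the two remaining factors of $(-1)^{N-1}$ (one from $\M$'s definition, one from $\kappa=\det(-B_1DB_1^T)=(-1)^{N-1}\det(B_1DB_1^T)$) cancel to give~\eqref{eq:MTT} with the weight-product on the right. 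One small point of hygiene: the claim that all $(N-1)\times(N-1)$ principal minors coincide needs both the row sums and the column sums to vanish (equivalently, it needs $\mathrm{adj}(\L)$ to have both its columns and its rows proportional to $\1$); here that is automatic because $\L$ is symmetric, but you should say so rather than attribute it to the row-sum condition alone. A self-contained proof along your lines would arguably strengthen the paper, since the reader must otherwise reconcile Tutte's sign conventions with the nonstandard ones used here.
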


\begin{remark}
  This is Theorem VI.29 in the text of Tutte~\cite{Tutte}: a proof is
  provided there.  Notice that if all of the edge weights are
  non-negative, then the sum in~\eqref{eq:MTT} is a sum of positive
  terms. This is an alternate proof that the kernel of a graph
  Laplacian with positive weights is simple for a connected graph.
  However, once we allow negative weights, the sum on the right-hand
  side can have cancellations, and this is the major difficulty in
  understanding the spectral properties of graphs with negative
  weights.
\end{remark}

\subsection{Statement and proof of main theorem}\label{sec:proof}

We begin by stating one of the main results of this paper.

\begin{thm}\label{thm:main}
  Let $\G$ be a connected signed graph, and
  $n_{-}(\G),n_0(\G),n_+(\G)$ be the number of negative, zero, and
  positive eigenvalues respectively. Then for any choice of weights
  one has the following inequalities:
  \begin{equation}\label{eq:bounds}
    \begin{split}
      c(\Gp)-1 \le &n_+(\G) \le  N-c(\Gm), \\
      c(\Gm)-1 \le &n_-(\G)  \leq  N-c(\Gp), \\
      1\leq &n_0(\G) \leq  N+2-c(\Gm)-c(\Gp).  
    \end{split}
  \end{equation}

Further these bounds are tight: for any given graph there exist 
open sets of weights giving maximal number of negative eigenvalues 
\begin{align*}
  n_+(\G) &= c(\Gp)-1, \\
  n_-(\G) &= N-c(\Gp),\\
  n_0(\G) &= 1.
\end{align*}  
as well as open sets of weights giving the maximal number of positive eigenvalues
\begin{align*}
  n_+(\G) &= N-c(\Gm), \\
  n_-(\G) &= c(\Gm)-1,\\
  n_0(\G) &= 1.
\end{align*}  
\end{thm}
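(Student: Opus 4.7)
The plan is to prove the three lower bounds in \eqref{eq:bounds}, derive the three upper bounds from the identity $n_+(\G)+n_-(\G)+n_0(\G)=N$, and finally exhibit open sets of weights achieving each extremum via a perturbative scaling argument.

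For the lower bounds, the bound $n_0(\G)\ge 1$ is immediate because $\L(\G)\1=\0$. For $n_+(\G)\ge c(\Gp)-1$, I would work on the subspace $W:=\ker(\L(\Gp))$, which has dimension $c(\Gp)$ and consists of vectors constant on each component of $\Gp$. Since $\L(\Gp)$ is negative semi-definite and $\L(\Gm)$ is positive semi-definite, any $v\in W$ satisfies
\begin{equation*}
v^T\L(\G)v \;=\; v^T\L(\Gp)v + v^T\L(\Gm)v \;=\; v^T\L(\Gm)v \;\ge\; 0.
\end{equation*}
The kernel of this quadratic form on $W$ equals $\ker(\L(\Gp))\cap\ker(\L(\Gm))$, namely the space of vectors constant on every component of $\Gp\cup\Gm=\G$; by connectedness of $\G$ this intersection is one-dimensional, spanned by $\1$. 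Hence $W$ contains a $(c(\Gp)-1)$-dimensional subspace on which $v^T\L(\G)v>0$, and the Courant--Fischer min-max principle gives $n_+(\G)\ge c(\Gp)-1$. The symmetric argument with $\Gp$ and $\Gm$ swapped yields $n_-(\G)\ge c(\Gm)-1$. Combining these three lower bounds with $n_++n_-+n_0=N$ yields all three upper bounds in \eqref{eq:bounds} immediately.

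For tightness at the first extremum, scale every negatively weighted edge by a small $\epsilon>0$, writing $\L_\epsilon=\L(\Gp)+\epsilon\,\L(\Gm)$. At $\epsilon=0$, $\L(\Gp)$ has $N-c(\Gp)$ strictly negative eigenvalues and a $c(\Gp)$-dimensional kernel $W$. For small $\epsilon>0$ continuity preserves the $N-c(\Gp)$ negative eigenvalues, and standard degenerate first-order perturbation theory splits the zero eigenvalues according to the spectrum of $\L(\Gm)$ restricted to $W$. By the kernel analysis above, this restricted operator is positive semi-definite with exactly one zero eigenvalue (corresponding to $\1$, which is an exact eigenvector of $\L_\epsilon$ for every $\epsilon$) and $c(\Gp)-1$ strictly positive eigenvalues. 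Thus for all sufficiently small $\epsilon>0$ we have $\mathrm{ind}(\G)=(N-c(\Gp),\,1,\,c(\Gp)-1)$, and local constancy of these indices away from eigenvalue crossings guarantees this persists on an open neighborhood in weight-space. Scaling the positive weights instead realizes the other extremum $(c(\Gm)-1,\,1,\,N-c(\Gm))$.

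The main subtlety is verifying that the restricted quadratic form on $W$ really has one-dimensional kernel; this hinges on the kernel-intersection identity $\ker(\L(\Gp))\cap\ker(\L(\Gm))=\mathrm{span}(\1)$, which is the same graph-theoretic fact underlying the flexibility bound in Lemma~\ref{lem:flexibility} and hence cleanly ties this theorem to the topological content developed earlier. Beyond that, everything reduces to routine linear algebra (min-max and standard first-order perturbation theory).
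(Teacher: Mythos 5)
Your proof is correct, but it reaches the inequalities by a genuinely different and more elementary route than the paper. The paper's proof runs the homotopy $\G(t)=\Gp+t\Gm$ through the Kirchhoff matrix tree theorem: it shows the crossing polynomial $\M(\G(t))$ has nonzero coefficients exactly in degrees $c(\Gp)-1$ through $N-c(\Gm)$ (Lemma~\ref{lem:poly}), that its roots are real and nonnegative because they are eigenvalues of the generalized problem $\L_+v=-t\L_-v$ (Lemma~\ref{lem:rr}), and that root multiplicities count transversal eigenvalue crossings (Lemma~\ref{lem:transverse}); the bounds then follow by tracking how many of the $\tau$ crossings have occurred by $t=1$. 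You instead get the lower bounds $n_+\ge c(\Gp)-1$ and $n_-\ge c(\Gm)-1$ directly from Courant--Fischer applied to the quadratic form on $\ker(\L(\Gp))$ (resp.\ $\ker(\L(\Gm))$), using only the kernel-intersection fact $\ker(\L(\Gp))\cap\ker(\L(\Gm))=\mathrm{span}(\1)$, and then obtain the upper bounds for free from $n_++n_-+n_0=N$ --- no matrix tree theorem, no real-rootedness, no monotonicity. Your tightness argument (degenerate first-order perturbation of $\L(\Gp)+\epsilon\L(\Gm)$ at $\epsilon=0$, plus local constancy of the index where $n_0=1$) is essentially the paper's small-$t$ analysis in Lemma~\ref{lem:transverse} and the first step of its proof of the theorem. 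What your route buys is a short, self-contained variational proof of the stated theorem; what the paper's heavier machinery buys is the additional structure used throughout the rest of the article --- monotonicity of the eigenvalues in $t$, the identification of the bifurcation values $t^\star$ with roots of $\M(\G(t))$, the exact count of $\tau$ crossings, and the deletion--contraction and asymptotic refinements --- none of which your argument produces.
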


\begin{remarks}
Notice that in each inequality in~\eqref{eq:bounds} the difference between the upper and
lower bound is exactly the flexibility of the graph $\tau(\G)$ .  This shows that
$\tau(\G)$ counts the number of potential eigenvalue crossings. More precisely the
theorem shows that there are $c(\Gp)-1$ eigenvalues which are always
negative, $c(\Gm)-1$ eigenvalues which are always positive, and
$\tau(\G) = N+1-c(\Gm)-c(\Gp)$ eigenvalues depend on the choice of
weights. For rigid graphs there are no eigenvalue crossings and the
index is fixed regardless of the choice of weights (thus inspiring the
terminology ``rigid''). 

In general the index will be constant with $n_0(\G)=1$ on open sets,
separated by codimension one sets where   $n_0(\G)=2$. One expects 
that  $n_0(\G)>2$ only on sets of higher codimension. 

In the context of network models such as~\eqref{eq:network} described
above, the theorem shows that a necessary condition for stability of a
phase-locked state is that between any two oscillators there exists a
path such $\phi^\prime_{ij}(x_i-x_j)>0$ for all edges on the
path. While the necessity of such a condition seems physically obvious
we are unaware of a previous proof of this.
\end{remarks}

The main idea of the proof is to consider a one-parameter family of
weighted Laplacian matrices. We then compute a polynomial associated
to the graph, the zeroes of which detects eigenvalue crossings. We
then show that this polynomial has exactly $\tau$ roots in the
positive half-line, and that the multiplicity of these roots 
is equal to the number of eigenvalues crossing from the left to the right half-plane 
at that parameter value

\begin{define}
 Given a weighted graph $\G$ we define a one-parameter family of
 weighted graphs $\G(t)$ as follows: the weights of $\G(t)$ are
 related to those of $\G$ by
\begin{equation*}
  \gamma_{ij}(t) := \begin{cases} \gamma_{ij}, & \gamma_{ij}>0,\\ t\cdot\gamma_{ij}, & \gamma_{ij}<0,\end{cases}
\end{equation*}
or, more compactly, $\G(t) = \Gp + t\Gm$.  Obviously $\G = \G(1)$.

We also recall Lemma~\ref{lem:mtt}, the weighted matrix tree theorem,
and define
\begin{equation}\tag{\ref{eq:MTT}} 
  \M(\G(t)) = \sum_{T\in\ST(\G)} \pi(T).
\end{equation}
\end{define}

The next observation is that  $\M(\G(t))$ is a polynomial of a very special form:

\begin{lem}
\label{lem:poly}
$\M(\G(t))$ is a polynomial in $t$ which takes the following form 
 \begin{equation}\label{eq:pg}
  \M(\G(t)) = \sum_{k=c(\Gp)-1}^{N-c(\Gm)} a_k (-t)^k,
\end{equation}
where the coefficients $a_k$ are given by 
\[
 a_k = \sum_{T \in \ST_k(\G)} |\pi(T)|.
\] 
All of the $a_k$ appearing in~\eqref{eq:pg} are nonnegative; moreover,
the first and last coefficients, $a_{c(\Gp)-1}$ and $a_{N-c(\Gm)}$,
are strictly positive.
\end{lem}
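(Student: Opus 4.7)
The plan is to apply the weighted matrix tree theorem (Lemma~\ref{lem:mtt}) to $\G(t)$ and then track how each spanning tree contributes as a function of $t$. For any spanning tree $T\in\ST(\G)$, the edges of $T$ split as $E(T)=(E(T)\cap E(\Gp))\sqcup(E(T)\cap E(\Gm))$; call the size of the second set $k=k(T)$, so $T\in\ST_k(\G)$. Under the substitution $\gamma_{ij}\mapsto\gamma_{ij}(t)$, the positive edges are unchanged while each negative edge is multiplied by $t$, and each negative $\gamma_{ij}$ equals $-|\gamma_{ij}|$. Thus
\begin{equation*}
\pi_t(T)\;=\;\prod_{(i,j)\in E(T)\cap E(\Gp)}\!\!\gamma_{ij}\;\cdot\;\prod_{(i,j)\in E(T)\cap E(\Gm)}\!\!(t\,\gamma_{ij})\;=\;(-t)^{k(T)}\,|\pi(T)|,
\end{equation*}
where $|\pi(T)|=\prod_{(i,j)\in E(T)}|\gamma_{ij}|>0$. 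Summing over all spanning trees and grouping by $k$ gives $\M(\G(t))=\sum_k(-t)^k\sum_{T\in\ST_k(\G)}|\pi(T)|$, which is the claimed expression once the range of $k$ is established.

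Next I would determine for which $k$ the set $\ST_k(\G)$ can be nonempty. A spanning tree has $N-1$ edges, so writing $k$ negative and $N-1-k$ positive edges, the positive edges used form a spanning forest of $\Gp$ and hence number at most $N-c(\Gp)$; this gives $N-1-k\le N-c(\Gp)$, i.e.\ $k\ge c(\Gp)-1$. Symmetrically, the negative edges form a forest in $\Gm$, yielding $k\le N-c(\Gm)$. These two inequalities produce exactly the index range in~\eqref{eq:pg}, and nonnegativity of the $a_k$ is immediate from the formula $a_k=\sum_{T\in\ST_k(\G)}|\pi(T)|$ since edge weights are nonzero.

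The remaining (and most substantive) step is to show that both extremal sets $\ST_{c(\Gp)-1}(\G)$ and $\ST_{N-c(\Gm)}(\G)$ are actually nonempty, so that $a_{c(\Gp)-1}$ and $a_{N-c(\Gm)}$ are strictly positive. This is where I would re-use the quotient-graph construction from the proof of Lemma~\ref{lem:flexibility}. Specifically, to realize the lower extreme, I build a spanning tree by first choosing a spanning tree inside each connected component $\Gamma_+^{(i)}$ of $\Gp$ (using $N-c(\Gp)$ positive edges in total), then forming the quotient graph $\widetilde{\G}$ whose vertices are the components of $\Gp$ and whose edges come from $\Gm$; since $\G$ is connected, so is $\widetilde{\G}$, and any spanning tree of $\widetilde\G$ lifts to a collection of $c(\Gp)-1$ negative edges of $\G$ that, together with the intra-component positive trees, form a spanning tree of $\G$ with exactly $c(\Gp)-1$ negative edges. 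The construction for the upper extreme is symmetric: build spanning trees inside each component of $\Gm$ using $N-c(\Gm)$ negative edges, then lift a spanning tree of the analogous quotient graph to get $c(\Gm)-1$ positive edges connecting them.

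The main obstacle is really just this last nonemptiness step, and it is already essentially packaged by the Mayer--Vietoris-style lifting argument from Lemma~\ref{lem:flexibility}; every other piece (the matrix tree theorem, the $(-t)^k$ factorization, and the bounds on $k$) is formal. Once the two extremal spanning trees are exhibited, the strict positivity of $a_{c(\Gp)-1}$ and $a_{N-c(\Gm)}$ follows because each $|\pi(T)|$ is a product of nonzero numbers and the sum is over nonnegative contributions, completing the proof.
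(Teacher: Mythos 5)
Your proposal is correct and follows essentially the same route as the paper: expand $\M(\G(t))$ via the matrix tree theorem so that each spanning tree with $k$ negative edges contributes $(-t)^k\av{\pi(T)}$, bound $k$ by forest-counting in $\Gp$ and $\Gm$, and establish strict positivity of the extremal coefficients by explicitly constructing spanning trees with exactly $c(\Gp)-1$ (resp.\ $N-c(\Gm)$) negative edges. Your use of the quotient-graph lifting from Lemma~\ref{lem:flexibility} just makes explicit the connecting step that the paper states informally, so there is no substantive difference.
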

 
\begin{proof}
  Clearly $\M(\G(t))$ is a polynomial in $t$, since it is given by
  sums and products of terms each of which is at most linear in
  $t$. Since there is one power of $t$ associated with each negatively
  weighted edge and $\ST = \cup_{k=0}^{N-1}\ST_k $ it follows that
  \[
  \M(\G(t)) = \sum_{k=0}^{N-1} a_k (-t)^k. 
  \]
  with $a_k$ defined as above. 

  Next we note that $\ST_k$ is empty for $k < c(\Gp)-1$ and non-empty
  for $k=c(\Gp)-1$. To see this note that, since $\Gp$ has $ c(\Gp)$
  components we need at least $c(\Gp)-1$ negative edges to connect
  them, and that there is at least one way to construct a spanning
  tree with $c(\Gp)-1$ negative edges: we first construct a spanning
  tree on each of the $c(\Gp)-1$ components of $\Gp$ using positive
  edges, and then connect them with $c(\Gp)-1$ negative edges. This
  can always be done since the graph is assumed to be connected. Since
  $a_{c(\Gp)-1}$ is a sum over a non-empty set of positive terms it is
  positive. The upper bounds follow from the dual argument: reversing
  the roles of $\Gp$ and $\Gm$ shows that any spanning tree must have
  at least $c(\Gm)-1$ positive edges. Since any spanning tree has
  exactly $N-1$ edges there are at most $N-1-(c(\Gm)-1)=N-c(\Gm)$
  negative edges.

\end{proof}

\begin{remark}
  We note that the polynomial $\M(\G(t))$ is strongly reminiscent of
  other graph polynomials such as the chromatic, rank and Tutte
  polynomials, which have definitions in terms of sums over spanning
  trees. We will show later in the paper that $\M(\G(t))$ satisfies a
  contraction-deletion relation similar to that satisfied by other
  graph polynomials.
\end{remark}

\begin{lem}\label{lem:rr}
 The roots of the polynomial $\M(\G(t))$ are real and non-negative.
\end{lem}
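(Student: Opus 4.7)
The plan is to reinterpret the roots of $\M(\G(t))$ as generalized eigenvalues of a symmetric positive-semidefinite matrix pencil on $W := \1^\perp$, and then conclude via simultaneous diagonalization that these eigenvalues are real and non-negative. Since $\L(\G(t))\1 = \0$ and $\L(\G(t))$ is symmetric, in any orthonormal basis containing $\1/\sqrt{N}$ the matrix splits as a $0$-block plus the restriction $\L(\G(t))|_W$, so up to a nonzero multiplicative constant $\M(\G(t))$ equals $\det(\L(\G(t))|_W)$. Decompose $\L(\G(t)) = -A + tB$ where $A := -\L(\Gp)$ and $B := \L(\Gm)$. Both are symmetric and positive semi-definite because they are positively-weighted graph Laplacians up to sign: $\langle v, Av\rangle = \sum_{(i,j)\in E(\Gp)} \gamma_{ij}(v_i-v_j)^2$ and analogously for $B$. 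The zeros of $\M(\G(t))$ are therefore the values of $t$ for which $(A - tB)|_W$ is singular.

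The central step is to check that $(A+B)|_W$ is positive definite. The kernel of $A$ consists of indicator functions of the components of $\Gp$ and analogously for $B$; so any vector in $\ker A \cap \ker B$ is constant along every edge of $\G$, and connectedness of $\G$ forces it into $\spanop{\1}$. Hence $\ker A \cap \ker B \cap W = \{\0\}$ and $(A+B)|_W \succ 0$. Setting $S := ((A+B)|_W)^{1/2}$, together with $\tilde A := S^{-1} A|_W S^{-1}$ and $\tilde B := S^{-1} B|_W S^{-1}$, I obtain two symmetric positive-semidefinite operators on $W$ satisfying $\tilde A + \tilde B = I_W$. Because they sum to the identity they commute, and therefore share an orthonormal eigenbasis in which $\tilde A$ has eigenvalues $\alpha_i \in [0,1]$ and $\tilde B$ has eigenvalues $1 - \alpha_i$.

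Changing variables via $v = S^{-1}w$ transforms $(A-tB)|_W v = 0$ into $(\tilde A - t \tilde B) w = 0$, which in the common eigenbasis gives the factorization
\[
\det\bigl((A-tB)|_W\bigr) = \det(S)^2 \prod_{i=1}^{N-1} \bigl(\alpha_i - t(1-\alpha_i)\bigr).
\]
The roots are $t_i = \alpha_i/(1-\alpha_i) \ge 0$ whenever $\alpha_i < 1$, while the indices with $\alpha_i = 1$ contribute a nonzero constant factor (``generalized eigenvalues at infinity'') and lower the polynomial degree without producing a finite root. Thus every finite root of $\M(\G(t))$ is real and non-negative. As a consistency check, the $c(\Gp)-1$ indices with $\alpha_i = 0$ produce the overall factor $t^{c(\Gp)-1}$ predicted by Lemma~\ref{lem:poly}, and the $c(\Gm) - 1$ indices with $\alpha_i = 1$ drop the degree from $N-1$ to $N - c(\Gm)$, leaving exactly $\tau(\G)$ strictly positive roots counted with multiplicity. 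The main obstacle I anticipate is establishing the positive definiteness of $(A+B)|_W$; this is where the connectedness hypothesis on $\G$ is essential, and without it the common kernel would meet $W$ nontrivially and the simultaneous-diagonalization argument would break down.
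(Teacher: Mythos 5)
Your proof is correct, and while it rests on the same underlying fact as the paper's --- that the pencil $(\L(\Gp),\L(\Gm))$ restricted to $\1^{\perp}$ admits a strictly definite linear combination because $\G$ is connected --- the execution is genuinely different and, in one respect, tidier. The paper phrases the problem as the generalized symmetric eigenvalue problem $\L_{+}v=-t\L_{-}v$ on $\1^{\perp}$, proves reality by an equivariant $GL(2,\R)$ change of pencil that reduces to the case where one matrix of the pair is definite, and then obtains non-negativity of the roots \emph{separately}, from the alternating signs of the coefficients $a_k(-t)^k$ established in Lemma~\ref{lem:poly}. Your congruence by $S^{-1}=((A+B)|_W)^{-1/2}$, which produces two commuting positive-semidefinite operators summing to $I_W$, is a self-contained proof of the same gsep fact that delivers reality and non-negativity in a single stroke, since the finite generalized eigenvalues come out explicitly as $\alpha_i/(1-\alpha_i)\ge 0$. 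Your accounting of the degenerate indices is also a nice bonus: the $\alpha_i=0$ eigenvalues (multiplicity $c(\Gp)-1$, from $\dim(\ker A\cap\1^\perp)$) and the $\alpha_i=1$ eigenvalues at infinity (multiplicity $c(\Gm)-1$) reproduce the lowest and highest powers of $t$ in Lemma~\ref{lem:poly} and show directly that there are exactly $\tau(\G)$ positive roots counted with multiplicity, a fact the paper assembles from Lemma~\ref{lem:poly} together with this lemma. The one point worth stating explicitly, which you use implicitly, is that $\M(\G(t))=\frac{(-1)^{N-1}}{N}\det\bigl(\L(\G(t))|_{\1^\perp}\bigr)$ as polynomials in $t$; this is immediate from the definition of $\M$ since $\1$ lies in the kernel for every $t$.
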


\begin{proof}
 This follows from the observation that the roots of the polynomial are exactly the 
eigenvalues of a generalized symmetric eigenvalue problem. Note that a root of the polynomial 
 $\M(\G(t))$ corresponds to a solution of 
\[
 {\mathcal L}_+ v = -t {\mathcal L}_- v  
\]
where $v$ can be assumed to be orthogonal to $(1,1,1,\ldots,1).$ A
standard result in the theory of the generalized symmetric eigenvalue
problem (gsep) is that a sufficient condition for the problem to have
all real eigenvalues is that there exists a linear combination of
$L_+$ and $L_-$ that is strictly positive definite. For completeness
we give a short proof of this here. First note that the gsep $A v =
\lambda B v$ has real eigenvalues if either $A$ or $B$ is strictly
positive definite. If $B$ is strictly positive definite then the above
problem is self-adjoint under the inner product $\langle v,v\rangle=
v^t B v$, proving reality of the eigenvalues. If $A$ is strictly
positive definite a similar calculation holds with $\lambda$ replaced
by $\lambda^{-1}.$ Next note that there is an equivariant action of
$GL(2,R)$ on the generalized symmetric eigenvalue problem, as follows:
if $(A,B)$ is a pair with eigenvalue $\lambda$ $A v = \lambda B v $
then for any scalars $\alpha,\beta,\gamma,\delta$ with
$\alpha\delta-\beta\gamma\neq 0$ the pair $(\alpha A + \beta B, \gamma
A + \delta B)$ has an eigenvalue $\mu =\frac{\alpha \lambda + \beta
}{\gamma+\delta \lambda}$ and the same eigenvector $v$
\begin{equation}\label{eq:ab}
  (\alpha A + \beta B) v = \mu (\gamma A + \delta B) v. 
\end{equation}
Thus if there exists a positive linear combination of $A$ and $B$ then
the gsep $Av = \lambda B v$ has only real eigenvalues. In the case of
$L_+$ and $L_-$ it is clear that (for instance) $L_+ - L_-$ is
strictly positive definite on $(1,1,1,\ldots,1)^\perp$, as it is the
graph Laplacian for a connected graph with positive weights. Therefore
${\mathcal L}_+ v = -t {\mathcal L}_- v $ has only real roots.

For more information on the gsep see the review paper of Parlett\cite{Parlett.1991}, 
Theorem 1 in the  paper of Crawford\cite{Crawford.1976,Crawford.Errata.1978} or 
chapter IX \S 3 of the text of Greub\cite{Greub}, for a proof (due to Milnor) of a somewhat 
stronger 
result.

That none of the eigenvalues can be negative is clear from the form of the polynomial: since 
the coefficients alternate in sign $p(t)$ is obviously non-zero for $t<0$. 
    
\end{proof}


\begin{remark}\label{rem:logconcave}
 The fact that $P_\G(t)$ has only real roots implies, via Newton's inequality, 
that the sequence $\{a_k\}_{k=0}^N$ is log-concave
 \[
  a_{k+1} a_{k-1} \leq a_k^2.
 \]
In the special case where the weights of the edges are all $\pm 1$ then the
coefficients $a_k$ are integers which count the number of spanning trees 
of $\G$ having exactly $k$ edges in $\Gm.$  
A large number of other combinatorial sequences share this property. 
See the review papers of Stanley\cite{Stanley} or Brenti\cite{Brenti} for details. The analogous 
problem of the log-concavity of the coefficients of the chromatic polynomial, a much more 
difficult problem, was a long-standing conjecture that has recently been 
established by Huh\cite{Huh,Huh.Katz.12}. 

\end{remark}

Next we show that the multiplicity of the zeroes of the polynomial 
$\M(\G(t))$ is equal to the dimension of the kernel of 
${\mathcal L}(t)|_{(1,1,1,\ldots,1)^\perp}.$ First a preliminary lemma:

\begin{lem}
 The eigenvalues $\lambda_i(t)$ corresponding to eigenvectors orthogonal to $(1,1,1,\ldots,1)$  are non-decreasing 
functions of $t$ that cross zero transversely: if $\lambda_i(t)=0$ then $\lambda_i^\prime(t) > 0.$
\label{lem:transverse}
\end{lem}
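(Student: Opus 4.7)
The plan is to exploit two structural facts: the family $\mathcal{L}(t) = \mathcal{L}_+ + t\mathcal{L}_-$ is affine in $t$, and its two pieces have opposite definiteness, with $\mathcal{L}_+$ negative semi-definite and $\mathcal{L}_-$ positive semi-definite. The second observation is immediate from~\eqref{eq:Laplacian}: $-\mathcal{L}_-$ has the form of a standard graph Laplacian with positive weights $|\gamma_{ij}|$ on the edges of $\Gamma_-$, and so the remark following Lemma~\ref{lem:flexibility} tells us that $\mathcal{L}_\pm\1=\0$, $\pm\mathcal{L}_\pm\preceq 0$, and $\ker\mathcal{L}_\pm$ consists precisely of the vectors that are constant on each connected component of $\Gamma_\pm$.

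Monotonicity is then a one-line computation, combined with the Rellich--Kato theorem for real-analytic families of real-symmetric matrices, which guarantees that the eigenvalues can be organized as real-analytic functions $\lambda_i(t)$ carried by a real-analytic orthonormal frame of eigenvectors $v_i(t)$. Differentiating $\lambda_i(t) = \langle v_i(t),\mathcal{L}(t)v_i(t)\rangle$ and using $\|v_i(t)\|=1$ yields the Hellmann--Feynman identity
\begin{equation*}
  \lambda_i'(t) = \langle v_i(t),\, \mathcal{L}_- v_i(t)\rangle \ge 0,
\end{equation*}
and since $\1^\perp$ is invariant under every $\mathcal{L}(t)$ we may restrict attention to the $N-1$ branches whose eigenvectors lie in $\1^\perp$, giving the claimed monotonicity.

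For transversality, suppose $\lambda_i(t_0)=0$ with unit eigenvector $v\in\1^\perp$, and assume for contradiction that $\lambda_i'(t_0) = \langle v,\mathcal{L}_- v\rangle = 0$. Positive semi-definiteness of $\mathcal{L}_-$ then forces $v\in\ker\mathcal{L}_-$, after which the eigenvalue equation $\mathcal{L}_+ v = -t_0\mathcal{L}_- v$ gives $v\in\ker\mathcal{L}_+$ as well. Consequently $v$ is simultaneously constant on every connected component of $\Gamma_+$ and on every connected component of $\Gamma_-$. Since $\Gamma$ is connected, any two vertices are joined by a path each edge of which lies in either $\Gamma_+$ or $\Gamma_-$, and along any such edge $v$ must take the same value at both endpoints; hence $v$ is globally constant, i.e.\ proportional to $\1$, contradicting $v\perp\1$ and $v\ne\0$. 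The only mild technicality I anticipate is invoking Rellich--Kato so that $\lambda_i'(t_0)$ is meaningful even at points where the eigenvalue is non-simple, but for a real-analytic symmetric family this analytic branch structure is standard and the rest of the argument is purely linear-algebraic.
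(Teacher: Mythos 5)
Your proof is correct and follows essentially the same route as the paper's: monotonicity from the positive semi-definiteness of the $t$-derivative $\mathcal{L}_-$, and transversality by showing that a vanishing derivative at a zero crossing would force an eigenvector into $\ker(\mathcal{L}_+)\cap\ker(\mathcal{L}_-)$, hence constant on components of both $\Gp$ and $\Gm$ and therefore proportional to $\1$ by connectedness. The only cosmetic difference is that you reach the derivative formula via Rellich analytic branches and Hellmann--Feynman, whereas the paper cites Kato's degenerate perturbation theory to identify $\lambda_i'(t^*)$ with an eigenvalue of the perturbation restricted to $\ker(\mathcal{L}(t^*))$; these are the same computation.
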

\begin{proof}
 The fact that $\lambda_i(t)$ are non-decreasing follows immediately from the fact that ${\mathcal L}_+$
is a positive semi-definite matrix. The transversality follows from a perturbation argument and 
elementary topological considerations as follows. If $\lambda_i(t)$ vanishes at $t=t^*$ then from 
degenerate perturbation theory\cite{Kato} we have that $\lambda_i^\prime(t^*)$ is equal to one of the 
eigenvalues of the matrix 
\[
 {\mathcal L}_+ |_{\ker({\mathcal L}(t^*))}
\]
The matrix ${\mathcal L}_+$ is positive semi-definite, and thus
$\lambda_i^\prime(t^*)\geq 0.$ To see the strict inequality we note
that in order for ${\mathcal L}_+ |_{\ker({\mathcal L}(t^*))}$ to have
a zero eigenvalue there is necessarily a vector in $\ker({\mathcal
  L}_+) \cap \ker({\mathcal L}_-).$ Such a vector would be constant on
components of $\Gp$ { and } $\Gm$ and thus, by the connectedness
assumption, on all of $\G$. The only vectors in $\ker({\mathcal L}_+)
\cap \ker({\mathcal L}_-)$ are thus multiples of $(1,1,1,\ldots,1),$
and so any other zero eigenvalue must cross through the origin
transversely.
\end{proof}

\begin{lem}
The dimension of the kernel of ${\mathcal L}(t^*)$ restricted to $(1,1,1,\ldots,1)^\perp$ 
is equal to the multiplicity of $t^*$ as a root of $\M(\G(t)).$  
\end{lem}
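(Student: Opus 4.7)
The plan is to identify $\M(\G(t))$ with (a nonzero constant multiple of) the product of the non-trivial eigenvalues of $\L(t)$, and then read off its order of vanishing at $t=t^*$ from the transversality supplied by Lemma~\ref{lem:transverse}.

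First, I would invoke Rellich's theorem for one-parameter analytic families of real symmetric matrices, applied to $\L(t)=\L_++t\L_-$, to obtain a real-analytic labeling $\mu_1(t),\ldots,\mu_N(t)$ of the eigenvalues of $\L(t)$ (together with analytic orthonormal eigenvectors) on a neighborhood of $t^*$. Because $\L(t)\1=\0$ for every $t$, one branch can be chosen to be identically zero; call it $\mu_1(t)\equiv 0$. The product of the remaining eigenvalues is a symmetric function of them, so from the definition of $\M$ we obtain the identity
\[
\M(\G(t))\;=\;\frac{(-1)^{N-1}}{N}\prod_{i=2}^{N}\mu_i(t),
\]
independent of the way the analytic branches are labeled.

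Next, set $k:=\dim\ker\L(t^*)|_{\1^\perp}$. By construction, exactly $k$ of the analytic branches $\mu_2(t),\ldots,\mu_N(t)$ vanish at $t=t^*$. By Lemma~\ref{lem:transverse}, each such branch satisfies $\mu_i'(t^*)>0$, so each contributes a simple zero at $t^*$, while the remaining $N-1-k$ branches stay nonzero there. Summing orders, $\prod_{i=2}^N\mu_i(t)$, and hence $\M(\G(t))$, vanishes to order exactly $k$ at $t^*$, which is the desired equality.

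The one delicate point is the appeal to analytic eigenvalue branches: the sorted eigenvalues of a smooth symmetric family are in general only Lipschitz at crossings, so naive differentiation can fail. Rellich's theorem circumvents this precisely because $\L(t)$ is analytic (in fact affine) in $t$. A bypass that avoids perturbation theory altogether is to observe that the coefficient of $\lambda$ in the characteristic polynomial $\det(\lambda I-\L(t))$ equals $N\M(\G(t))$, and then to argue directly from the semisimplicity of $0$ as an eigenvalue of $\L(t^*)$ (it is diagonalizable) together with Lemma~\ref{lem:transverse} to reach the same conclusion.
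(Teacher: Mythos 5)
Your proof is correct and takes essentially the same approach as the paper's: both identify $\M(\G(t))$ (up to the constant prefactor) with the product of the $N-1$ eigenvalue branches of $\L(t)$ on $\1^\perp$ and use the transversality of Lemma~\ref{lem:transverse} to conclude that each branch vanishing at $t^*$ contributes exactly a simple zero. The only difference is that you make explicit, via Rellich's theorem, the analyticity of the eigenvalue branches --- a point the paper's one-line proof leaves implicit.
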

\begin{proof}
The polynomial $\M(\G(t))=\prod_{i=1}^{N-1}\lambda_i(t)$ where, from the above, the 
$\lambda_i(t)$ have only simple roots. Thus the multiplicity of a root of $\M(\G(t))$ 
is equal to the number of $\lambda_i(t)$ that vanish there. 
\end{proof}

We are now in a position to prove Theorem \ref{thm:main}. \\

{\noindent\bf Proof of Theorem \ref{thm:main}.}  The matrix $\L(\Gm)$ is
  positive semi-definite so that the eigenvalues of $\L(\G(t))$ are
  non-decreasing functions of $t$. For $t=0$ the matrix ${\mathcal
    L}(t)$ is a graph Laplacian with $c(\Gp)$ components, so it has a
  $c(\Gp)$ dimensional kernel and an $N-c(\Gp)$ dimensional negative
  definite subspace. By lemma \ref{lem:transverse} $c(\Gp)-1$ of these
  zero eigenvalues cross transversely into the positive half-line, so
  for $t$ small and positive the index of $\L(\G(t))$ is
  $(N-c(\Gp),1,c(\Gp)-1).$ By lemma \ref{lem:rr} the crossing
  polynomial has exactly $\tau$ roots on the open positive half-line,
  and each root of the crossing polynomial corresponds to an
  eigenvalue crossing from the left half-line to the right, so one has
  exactly $\tau$ eigenvalue crossings. This gives
\begin{align*}
 &c(\Gp)-1 \leq n_+(\Gamma) \leq N - c(\Gm) \\
& c(\Gm)-1 \leq n_-(\Gamma) \leq N - c(\Gp).
\end{align*}
When $t$ is small and positive the lower holds for $ n_+(\Gamma)$ and the upper bound for 
$ n_-(\Gamma)$, and vice-versa when $t$ is large.
\qed

In the next section we consider more detailed asymptotics on the eigenvalues in the limits 
$t\ll 1$ and $t \gg 1$.

\subsection{Topological characterization of the flexibility}\label{sec:MV}

The quantity $\tau$ is a measure of the flexibility or rigidity of the
network dynamics: it is a measure of the number of eigenvalues that
can cross from the right half-plane to the left half-plane as the
weights of the connections are varied. It turns out that $\tau$ admits
a simple interpretation in terms of the topology of the graph via the
Mayer-Vietoris sequence.  This, in turn, will provide a great deal of
insight into the physics of the problem, allowing one to identify
important structures in the network. We begin with a definition:
 
\begin{define}
Consider the following map $\partial$ from $H_1(\Gamma)$, the space of
cycles in the graph, to $\R^N$. For each closed cycle $\gamma \in
H_1(\Gamma)$ we associate the following vector $v^\gamma =
\partial(\gamma) \in \R^N:$
\begin{itemize} 
\item For each time time the cycle enters vertex $i$ from a
  negatively weighted edge and exits through a positively weighted
  edge, $v^\gamma_i$ increases by one.
\item For each time time the cycle enters vertex $i$ from a
  positively weighted edge and exits through a negatively weighted
  edge, $v^\gamma_i$ decreases by one.
\item For vertices not on the cycle, or for vertices where the
  cycle enters and exits through edges of like weights, $v^\gamma$ is
  zero.
\end{itemize}
\end{define}

It is clear that this map is linear, and that the image is an additive
group.  It should also be clear that cycles that remain entirely in
edges of one type are in the kernel of this map and that only cycles
with both types of edges give rise to nontrivial vectors $v^\gamma.$
We refer to these cycles as ``cycles of mixed type''.

\begin{example}\label{exa:homology}
The following illustrates this map for two different graphs. The first
graph has two cycles of mixed type. The first ($\gamma_1$) is $3 \rightarrow 1
\rightarrow 2 \rightarrow 3$ and the second ($\gamma_2$) is $3 \rightarrow 5
\rightarrow 4 \rightarrow 3.$ This gives the additive group as linear
combinations of $v^{\gamma_1}=(-1,1,0,0,0)$ and $v^{\gamma_2}=(0,0,0,1,-1).$
\begin{figure}[ht]
\begin{centering}
\includegraphics[width=3in]{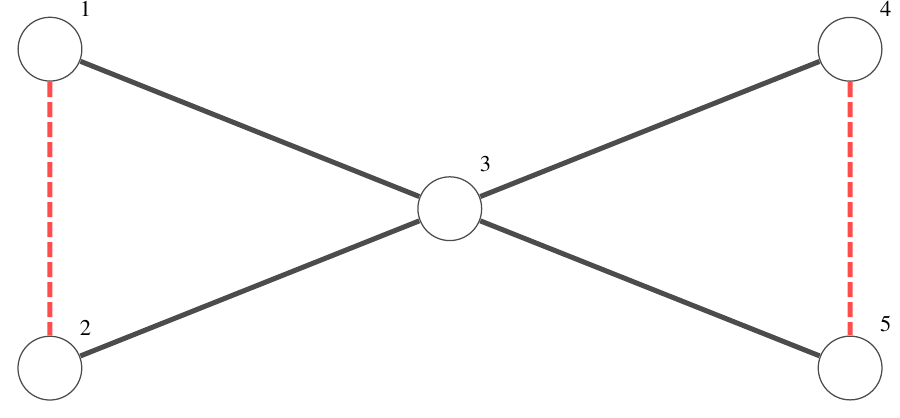} \includegraphics[width=3in]{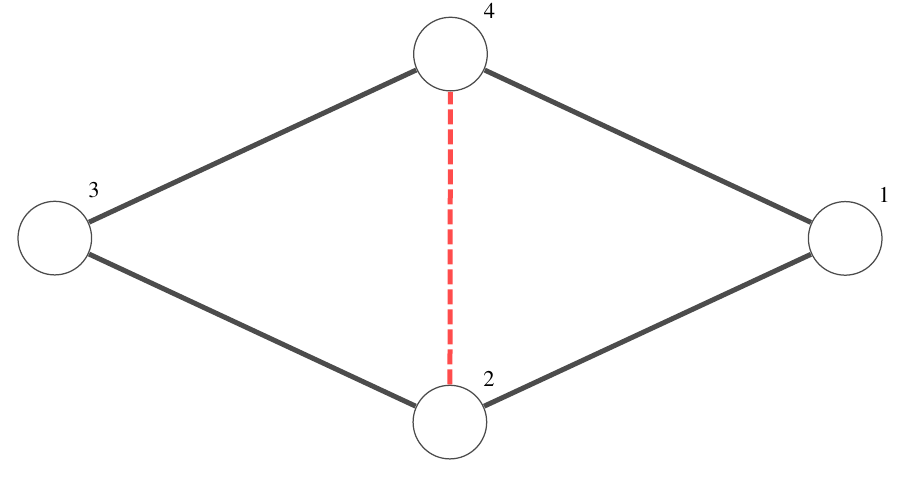}
\caption{The two graphs referenced in Example~\ref{exa:homology}.}
\label{fig:homology}
\end{centering}
\end{figure}
The second graph has only one circuit of mixed type. The cycles
$4\rightarrow 2\rightarrow 1 \rightarrow 4$ and $ 4\rightarrow
3\rightarrow 2\rightarrow 4$ sum to $4 \rightarrow 3 \rightarrow 2
\rightarrow 1 \rightarrow 4$. Since this is a circuit of all positive
edges it is in the kernel of $\partial$ and thus the second circuit is
the inverse of the first. This gives the additive group as vectors of
the form $k(0,1,0,-1)$.
\end{example} 

\begin{lem}
The flexibility 
\[
\tau = N + 1 - c(\Gamma_+)-c(\Gamma_-) 
\]
is equal to the the dimension of the group of cycles of mixed type. 
\end{lem}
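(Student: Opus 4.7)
The plan is to identify $\partial$ with the Mayer--Vietoris connecting homomorphism for the cover $\G = \Gp \cup \Gm$, whose intersection $\Gp \cap \Gm$ is just the discrete vertex set $V(\G)$ (the two subgraphs share no edges). Equivalently, $\partial$ is the composition $\gamma \mapsto \gamma_+ \mapsto \partial_+ \gamma_+$, where each 1-cycle $\gamma \in H_1(\G)$ is split into its positive-edge and negative-edge sub-chains $\gamma = \gamma_+ + \gamma_-$ (with the orientation inherited from $\gamma$) and $\partial_+$ denotes the usual simplicial boundary in $\Gp$. Once this identification is in hand, $\dim \text{Image}(\partial) = \tau$ follows from rank-nullity and the Euler-characteristic formula for graphs.

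First I would verify the identification on simple cycles: $(\partial_+ \gamma_+)_i$ equals the number of incoming positive edges of $\gamma$ at $i$ minus the number of outgoing positive edges, which is $\pm 1$ exactly at the vertices where $\gamma$ switches between a positive and a negative edge, and zero otherwise. This matches the paper's combinatorial rule for $v^\gamma$ up to an overall global sign, and extends by linearity to all of $H_1(\G)$.

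Next I would compute the kernel. Since $\gamma$ is a 1-cycle, $\partial \gamma = 0$ forces $\partial_+ \gamma_+ = -\partial_- \gamma_-$, so $\partial_+ \gamma_+ = 0$ iff both $\gamma_+$ and $\gamma_-$ are themselves 1-cycles, in $\Gp$ and $\Gm$ respectively. Hence
\[
 \ker(\partial) = H_1(\Gp) \oplus H_1(\Gm),
\]
embedded in $H_1(\G)$ by inclusion.

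Using $\dim H_1(X) = \av{E(X)} - \av{V(X)} + c(X)$ with $\G$ connected, $V(\Gp) = V(\Gm) = V(\G)$ of size $N$, and the disjoint decomposition $E(\G) = E(\Gp) \sqcup E(\Gm)$, rank-nullity yields
\begin{align*}
\dim \text{Image}(\partial) &= \dim H_1(\G) - \dim H_1(\Gp) - \dim H_1(\Gm) \\
&= (\av{E(\G)} - N + 1) - (\av{E(\Gp)} - N + c(\Gp)) - (\av{E(\Gm)} - N + c(\Gm)) \\
&= N + 1 - c(\Gp) - c(\Gm) = \tau.
\end{align*}
The main obstacle is the first step: the paper's definition of $v^\gamma$ is given only for simple traversed cycles, and I would need to verify cleanly that it extends linearly to $H_1(\G)$ and agrees with the algebraic boundary $\partial_+ \gamma_+$ on cycles that revisit vertices, which amounts to careful bookkeeping of local contributions at a vertex of higher multiplicity.
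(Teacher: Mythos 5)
Your proposal is correct, and it reaches the count by a genuinely different route through the same machinery. Both arguments hinge on identifying the combinatorial map $\partial$ with the Mayer--Vietoris connecting homomorphism for the decomposition $\G = \Gp \cup \Gm$ with $\Gp\cap\Gm$ the discrete vertex set; but the paper then reads off $\dim\im(\partial)$ purely from exactness in degree zero, namely from the tail $H_0(\Gp\cap\Gm)\to H_0(\Gp)\oplus H_0(\Gm)\to H_0(\G)\to 0$, which gives $\dim\ker(\alpha)=N-(c(\Gp)+c(\Gm)-1)$ with no computation in degree one at all. You instead work at the other end of the sequence: you identify $\ker(\partial)=H_1(\Gp)\oplus H_1(\Gm)$ explicitly (your direct argument that $\partial_+\gamma_+=0$ forces both $\gamma_\pm$ to be cycles is a hands-on substitute for exactness at $H_1(\G)$, using $H_1(\Gp\cap\Gm)=0$) and then apply rank--nullity together with the cycle-rank formula $\dim H_1(X)=\av{E(X)}-\av{V(X)}+c(X)$. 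Your version is longer but more self-contained and makes visible \emph{which} cycles die under $\partial$ (exactly those supported on a single sign class), which is a useful structural fact the paper's degree-zero count does not expose; the paper's version is shorter and needs only the connectedness of $\G$. The one point you flag yourself --- extending the enter/exit rule for $v^\gamma$ from simple traversed cycles to all of $H_1(\G)$ and matching it with $\partial_+\gamma_+$ at vertices of higher multiplicity --- is indeed the only place requiring care, but the paper's own proof silently assumes the same identification, so this is not a gap relative to the published argument; verifying it on simple cycles and extending by linearity, as you propose, is the right way to close it.
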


\begin{proof}
  This is a straightforward application of the Mayer-Vietoris
  sequence~\cite[\S 25]{Munkres.book}. We have the exact sequence
\[
H_1(\G) \overset{\partial}\rightarrow H_0(\Gp\cap\Gm)
\overset{\alpha}\rightarrow H_0(\Gp)\oplus H_0(\Gm)
\overset{\beta}\rightarrow H_0(\G).
\]
The graph is assumed to be connected so $\dim(H_0(\G))=1$,
$\dim(H_0(\Gp)\oplus H_0(\Gm))=c(\Gp)+c(\Gm)$ and, since $\Gp$ and
$\Gm$ have no common edges, $\dim(H_0(\Gp \cap \Gm))=N$. The exactness
implies that $\dim(\ker(\beta))=\dim(\im(\alpha))=c(\Gp) + c(\Gm)-1$
and $\dim(\im(\partial))=\dim(\ker(\alpha))=N-(c(\Gp) + c(\Gm)-1) =
N+1-c(\Gp)-c(\Gm).$ Note that the last quantity is exactly the
flexibility. Thus the flexibility is equal to the number of linearly
independent cycles of mixed type.

\end{proof}

This construction shows that the dimension of the group of mixed
cycles is equal to the number of eigenvalues which cross from the left
to the right-half plane. It would be more satisfying to give an
explicit bijection between the basis of mixed cycles in the graph and
the eigenvalues which cross over, or the associated eigenspaces.  Of
course we cannot really expect a bijection, since the eigenspace is an
analytical object that depends sensitively on the edge weights,
whereas the group of mixed cycles is topological and doesn't depend on
the edge weights. Nevertheless, there is a sort of topological
stand-in for the eigenspace which nicely characterizes the modes which
do not cross over.  First we make a definition.

\begin{define}
  Let $S$ be a subspace (chosen independently of the weights
  $\gamma_{ij}$) and $P_S$ be the orthogonal projection onto the
  subspace $S$. The subspace $S$ is said to be a subspace of fixed
  index if the matrix
\[
 P_S {\mathcal L} : S \mapsto S
\]
has the same index regardless of the choice of weights.  A subspace of
fixed index is {\bf maximal} if $$\dim(S)=c(\Gp)+c(\Gm)-1.$$
\end{define}

It is a simple consequence of the Courant minimax principle that the 
projection of a symmetric matrix onto a subspace cannot have more 
positive or more negative eigenvalues than the original operator.
We know from theorem~\eqref{thm:main} that for appropriate choices of 
the weights the matrix can have as few as $c(\Gp)-1$ negative eigenvalues 
and (for a different choice of weights) $c(\Gm)-1$ positive eigenvalues. It 
thus follows that a subspace of fixed index cannot have more than $c(\Gp)-1$ 
negative eigenvalues, $c(\Gm)-1$ positive eigenvalues and one zero eigenvalue, 
and that the maximum possible dimension of a fixed subspace is $c(\Gp)+c(\Gm)-1.$ 
However it is not clear that one can actually have a maximal subspace of 
fixed index or, for that matter, any non-trivial subspace of fixed index at 
all. We conclude this section by showing that there is always a  maximal 
subspace of fixed index that has a natural topological construction. In 
essence this subspace gives a natural (orthogonal) decomposition into 
modes which do not have an eigenvalue crossing, and modes which do. 

First we need to define two complementary subspaces, one of which will 
be a maximal subspace of fixed index. The construction of these subspaces is
similar in spirit to the construction of the cut-space and cycle-space from 
algebraic topology, although the cut-space and cycle-space are subspaces 
of the vector space over the edge set, not the vector space over the vertices. 
For a nice description of the cut- and cycle-space and some applications to the 
theory of electrical networks see the paper of Bryant~\cite{Bryant.67}.

\begin{define}
Define $S_{\rm{free}}$ to be the following subspace of ${\mathbb R}^N:$ Given a 
basis for the mixed cycles $\{\gamma_i\}_{i=1}^{\tau}$ let $S_{{\rm free}}=\spanop{\{ v^{\gamma_i}\}_{i=1}^{\tau}.}$ 

Define $S_{\rm fixed}$ as follows: let $\Gp_i$ be the $i^{th}$
component of $\Gamma_+$, and let $\vec v^{i,+}$ be the characteristic
vector of $\Gp_i$:

\begin{equation*}
  v^{i,+}_j = \begin{cases}
    1, & j\in\Gamma_{+,i}, \\
    0, & j\not\in\Gamma_{+,i}.
    \end{cases}
\end{equation*}
Similarly, let $\vec v^{i,-}$ be the characteristic vector of
$\Gm_i$. Then
\[
S_{\rm fixed} = {\rm span}(\{v^{i,+}\}_{i=1}^{c(\Gamma_+)}, \{v^{i,-}\}_{i=1}^{c(\Gamma_-)}).
\]
\end{define}

\begin{lem}
The subspaces $S_{\rm free}$ and $S_{\rm fixed}$ are orthogonal complements:
\[
S_{\rm fixed} = S_{\rm free}^\perp
\]

\end{lem}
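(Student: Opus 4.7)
The proof splits naturally into an orthogonality step and a dimension step. First I would verify $S_{\rm free} \subseteq S_{\rm fixed}^\perp$. Take any mixed cycle $\gamma$ and any positive-component indicator $v^{i,+}$; the case $v^{i,-}$ is symmetric. One has $\langle v^\gamma, v^{i,+}\rangle = \sum_{j \in \Gp_i} v^\gamma_j$. The key observation is that every edge of $\G$ having one endpoint inside $\Gp_i$ and one outside must carry negative weight, since otherwise the two endpoints would lie in the same positive component. So each excursion of the closed cycle $\gamma$ into $\Gp_i$ enters via a negative edge at some vertex $u$ and exits via a negative edge at some vertex $w$. If the excursion consists of a single vertex ($u=w$ with both adjacent cycle-edges negative) the contribution is $0$; otherwise $u$ is a negative-to-positive vertex contributing $+1$, $w$ is a positive-to-negative vertex contributing $-1$, and the intermediate vertices of the excursion have both adjacent cycle-edges positive, contributing $0$. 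Each excursion therefore contributes $0$ to the sum, establishing orthogonality.

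Next I would compute $\dim(S_{\rm fixed})$ and show it equals $c(\Gp)+c(\Gm)-1$. The $c(\Gp)+c(\Gm)$ generators $\{v^{i,+}\}\cup\{v^{j,-}\}$ obey the obvious relation $\sum_i v^{i,+}=\1=\sum_j v^{j,-}$. To see this is the only relation, suppose $\sum_i a_i v^{i,+} + \sum_j b_j v^{j,-} = \0$. Evaluating at a vertex lying in $\Gp_i\cap\Gm_j$ forces $a_i+b_j=0$. Tracing any path in $\G$ between two vertices and using that a positive edge preserves the positive component while a negative edge preserves the negative component lets one propagate the identity $a_i+b_j=0$ along the path; connectedness of $\G$ then forces all $a_i$ equal (and all $b_j$ equal to the common negative). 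Hence the relation space is one-dimensional and $\dim(S_{\rm fixed}) = c(\Gp)+c(\Gm)-1$.

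Combining with the preceding Mayer--Vietoris result $\dim(S_{\rm free}) = \tau = N+1-c(\Gp)-c(\Gm)$ gives $\dim(S_{\rm free}) + \dim(S_{\rm fixed}) = N$, which together with the orthogonality $S_{\rm free}\perp S_{\rm fixed}$ forces $S_{\rm fixed}=S_{\rm free}^\perp$. The main obstacle is the bookkeeping in the orthogonality step: one must handle both isolated vertices of $\Gp_i$ (where the cycle enters and exits via two negative edges and contributes $0$) and cycles that make several separate excursions into the same component. Both cases are absorbed once one records that the boundary of a component in the ambient graph consists entirely of edges of the opposite sign, so the $+1/-1$ pairings at entry and exit always match.
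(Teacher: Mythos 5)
Your proof is correct and follows essentially the same route as the paper's: orthogonality of the mixed-cycle vectors $v^\gamma$ to the component indicators via entry/exit counting, a dimension count showing that the only relation among $\{v^{i,+}\}\cup\{v^{j,-}\}$ is $\sum_i v^{i,+} = \1 = \sum_j v^{j,-}$, and then complementary dimensions plus orthogonality. One point to tighten in the orthogonality step: a negative edge can join two vertices of the \emph{same} positive component, so an excursion of $\gamma$ into $\Gp_i$ may traverse negative edges internal to $\Gp_i$; then the entry vertex need not be negative-to-positive and intermediate vertices need not contribute $0$ as you assert. The excursion's total contribution is still zero, but the clean way to see it is the telescoping sum $\sum_j\l(s_{j+1}-s_j\r)$, where $s_j$ is $1$ or $0$ according to whether the $j$th edge of the excursion is positive or negative and the first and last edges are both negative; this is a looseness your argument shares with the paper's own proof.
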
  

\begin{proof}
First we check that $\dim(S_{\rm fixed})=c(\Gamma_+)+c(\Gamma_-)-1$. Note that the 
set of vectors $\{\vec v^{i,+}\}$ is linearly independent, as is the set of vectors $\{\vec v^{i,-}\}$. However  $\{\vec v^{i,+}\}\cup\{\vec v^{i,-}\} $ is 
not a linearly independent set, as one has 
\[
\sum_{i=1}^{c(\Gp)} \vec v^{i,+} = \sum_{i=1}^{c(\Gm)} \vec v^{i,-} = (1,1,1,\ldots,1). 
\]
We claim that this is the only relation. To see this note that we 
have the following identity for subspaces $Q,R$
\[
\dim({\rm span}(Q \cup R)) = \dim(Q) +  \dim(R) - \dim({\rm span}(Q\cap R)).
\]
Next note that ${\rm span}(\{\vec v^{i,+}\})\cap{\rm span}(\{\vec v^{i,-}\})$ consists of 
all vectors that are constant on components of $\Gamma_+$ and constant on components of 
$\Gamma_-$. Since $\Gamma$ is connected this means that these vectors 
must be constant on all of $\Gamma$, and are thus proportional to $(1,1,1,\ldots,1).$ 
It is clear that a basis for $S_{\rm free}$ is given by {\em any} $c(\Gamma_+)+c(\Gamma_-)-1$ 
vectors from $\{\vec v^{i,+}\} \cup \{\vec v^{i,-}\}.$ 

Next note that if $\gamma$ is a mixed cycle, with 
$\vec v^\gamma$ the corresponding vector, and $\vec v$ is constant on a 
component $\Gamma_{+,i}$, then 
\[
\langle\vec v^\gamma,\vec v\rangle=0.
\]
To see this first note that the number of times the cycle $\gamma$ enters 
 $\Gamma_{+,i}$ must equal the number of times it leaves  $\Gamma_{+,i}.$ When 
it enters and leaves  $\Gamma_{+,i}$ it must do so through a negative edge. 
Each time it enters  $\Gamma_{+,i}$ gives a $+1$ in some entry of $\vec v^\gamma$, 
and each time it leaves gives a $-1$ entry. Thus $\langle\vec v^\gamma,\vec v\rangle$ is 
the sum of an equal number of $+1$ and $-1$ entries and is therefore zero. 

Since $S_{\rm fixed}$ and $S_{\rm free}$ are orthogonal and have complementary dimensions they 
are orthogonal complements of one another.
\end{proof}

\begin{define}
 We define the following subspaces of $S_{\rm fixed}$
\begin{itemize}
\item $S_{\rm fixed}^+ = \{ w | w \in {\rm span}\{v^{i+}\}_{i=1}^{c(\Gp)} ~~{\rm and}~~ \langle w,(1,1,\ldots,1) \rangle=0\}$ 
\item $S_{\rm fixed}^- = \{ w | w \in {\rm span}\{v^{i-}\}_{i=1}^{c(\Gp)} ~~{\rm and}~~ \langle w,(1,1,\ldots,1) \rangle=0\}$ 
\item $S_{\rm fixed}^0 = {\rm span}((1,1,1,\ldots,1))$
\end{itemize}

\end{define}

\begin{lem}
The subspaces $S_{\rm fixed}^{+/-/0}$ are ${\mathcal L}$--orthogonal.
Specifically, what we mean by this is if $v$ and $w$ are chosen from
two different subspaces of $S_{\rm fixed}^{+}$, $S_{\rm fixed}^{-}$,
$S_{\rm fixed}^{0}$, then
\[
\langle v, {\mathcal L} w \rangle=0.
\]

\label{lem:block}
\end{lem}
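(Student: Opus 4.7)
The plan is to decompose $\mathcal L = \mathcal L(\Gp) + \mathcal L(\Gm)$ and exploit the fact that each generator $v^{i,+}$ of $S_{\rm fixed}^+$ lies in the kernel of $\mathcal L(\Gp)$, and each generator $v^{i,-}$ of $S_{\rm fixed}^-$ lies in the kernel of $\mathcal L(\Gm)$. The orthogonality with $S_{\rm fixed}^0$ is immediate from $\mathcal L\1 = \0$ together with the symmetry of $\mathcal L$, so the content of the lemma is really the $\mathcal L$--orthogonality of $S_{\rm fixed}^+$ and $S_{\rm fixed}^-$.

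First I would record the key observation: since $v^{i,+}$ is the indicator vector of the $i^{\rm th}$ connected component $\Gp^{(i)}$ of $\Gp$, and $\mathcal L(\Gp)$ is a genuine (positive-weight) graph Laplacian, we have $\mathcal L(\Gp) v^{i,+}=\0$. The same reasoning (the kernel of any weighted graph Laplacian with nonzero weights is spanned by indicators of its connected components) gives $\mathcal L(\Gm) v^{j,-}=\0$. Consequently, for any $v\in S_{\rm fixed}^+$ we have $\mathcal L v = \mathcal L(\Gm) v$, and for any $w\in S_{\rm fixed}^-$ we have $\mathcal L w = \mathcal L(\Gp) w$.

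Now pick $v\in S_{\rm fixed}^+$ and $w\in S_{\rm fixed}^-$. Using the symmetry of $\mathcal L(\Gp)$ and the previous observation:
\begin{equation*}
\langle v,\mathcal L w\rangle = \langle v,\mathcal L(\Gp) w\rangle = \langle \mathcal L(\Gp) v, w\rangle = 0,
\end{equation*}
since $v$ is a linear combination of the $v^{i,+}$, each of which is killed by $\mathcal L(\Gp)$. For $v\in S_{\rm fixed}^{\pm}$ and $w\in S_{\rm fixed}^0 = \spanop{\1}$, we have $\mathcal L w = \0$ trivially, so $\langle v,\mathcal L w\rangle = 0$. This covers all pairs from distinct subspaces.

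There is no real obstacle here, just a bookkeeping check that the kernel of a signed-weight Laplacian with all weights of one sign is still spanned by component indicators (which is the same argument as in the classical positive-weight case since one only uses that weights are nonzero). The one thing I would flag is that the decomposition $\mathcal L = \mathcal L(\Gp) + \mathcal L(\Gm)$ relies on the fact that $\Gp$ and $\Gm$ partition the edge set, which is built into the definitions.
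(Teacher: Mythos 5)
Your proof is correct. The paper establishes the $S_{\rm fixed}^+$ versus $S_{\rm fixed}^-$ orthogonality by a direct entrywise computation: it writes out $\mathcal L \vec v^{i^\prime,-}$ explicitly, observes that it is a linear combination of difference vectors supported on the two endpoints of positive edges crossing the boundary of the component $\Gamma_{i^\prime,-}$, and then notes that each such difference vector is orthogonal to $\vec v^{i,+}$ because $\vec v^{i,+}$ is constant on components of $\Gp$. You instead split $\mathcal L = \mathcal L(\Gp) + \mathcal L(\Gm)$, kill the $\mathcal L(\Gm)$ term via $\mathcal L(\Gm)\vec v^{j,-}=\0$, and then move $\mathcal L(\Gp)$ onto the other argument by symmetry, where it annihilates the elements of $S_{\rm fixed}^+$. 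Both arguments rest on the same underlying fact --- a positive edge never leaves a component of $\Gp$ and a negative edge never leaves a component of $\Gm$ --- but yours packages it more cleanly through the linearity of the Laplacian in the edge weights and self-adjointness, avoiding the explicit boundary bookkeeping; the paper's version has the minor virtue of exhibiting $\mathcal L\vec v^{i^\prime,-}$ concretely. One small remark: you only need that the component indicators \emph{lie in} $\ker(\mathcal L(\Gp))$ and $\ker(\mathcal L(\Gm))$, which holds for any edge-weighted Laplacian simply because no edges leave a connected component; the stronger claim that they span those kernels (true here since each subgraph has weights of a single sign, but false for general signed Laplacians) is not actually used. Your handling of $S_{\rm fixed}^0$ matches the paper's.
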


\begin{proof}
$S_{\rm fixed}^0 \subset \ker({\mathcal L})$, so the fact that any
  inner product involving that subspace is obvious.  Thus we restrict
  our attention to $S_{\rm fixed}^{\pm}$.  By definition $\vec
  v^{i^\prime,-}$ is constant on the component $\Gamma_{i^\prime,-}$
  and zero off of this component.  Let $\partial\Gamma_{i^\prime,-}$
  denote the set of vertices which are not in $\Gamma_{i^\prime,-}$
  but which are connected to it by a positive edge: the nearest
  neighbors of the component.  By direct computation it is easy to see
  that ${\mathcal L}\vec v^{i^\prime,-} $ takes the following form:
\[
({\mathcal L}\vec v^{i^\prime,-})_j = \left\{
\begin{array}{c} -\sum_{k \in \partial\Gamma_{i^\prime,-}} \gamma_{k,j} \qquad j \in \Gamma_{i^\prime,-} \\ 
\sum_{k \in  \Gamma_{i^\prime,-}}  \gamma_{k,j} \qquad j \in \ \partial\Gamma_{i^\prime,-}.
\end{array}\right. 
\]
Thus ${\mathcal L}\vec v^{i^\prime,-}$ is a linear combination of vectors that are $1$ on some vertex in $\Gamma_{i^\prime,-}$ and 
$-1$ on some vertex not in $\Gamma_{i^\prime,-}$ but connected to it by a positive edge. Each of these vectors is necessarily 
orthogonal to $\vec v^{i,+}$ since $\vec v^{i,+}$ is constant on components of $\Gamma_{+}$. 
\end{proof}

We need one more lemma to prove the final result, the well-known Sylvester theorem
\begin{lem}[Sylvester's law of Inertia]
If $A$ is a square matrix and $S$ a square non-singular matrix then $A$ and $B =S^\dagger A S$ 
have the same index.  The matrices $A$ and $B$ are said to be Sylvester equivalent.
\end{lem}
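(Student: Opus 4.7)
The plan is to establish the three index counts separately using the fact that $S$ induces a non-singular linear change of coordinates, combined with a variational characterization of the positive and negative indices in terms of maximal definite subspaces. Throughout I assume $A$ (and hence $B$) is Hermitian so that the indices $n_+,n_0,n_-$ are well-defined.

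First I would handle the nullity. Since $S$ is non-singular, so is $S^\dagger$, and the map $v\mapsto Sv$ restricts to a linear isomorphism $\ker B \to \ker A$: if $Bv=0$ then $S^\dagger(ASv)=0$, which forces $ASv=0$; conversely every $u\in\ker A$ is the image of $S^{-1}u\in\ker B$. Therefore $n_0(A)=n_0(B)$.

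For the positive index, I would use a subspace-transport argument. Let $V^+\subset\R^N$ be a subspace of maximal dimension on which $A$ is positive definite, so $\dim V^+=n_+(A)$. Set $W^+ := S^{-1}(V^+)$; since $S$ is non-singular, $\dim W^+=\dim V^+=n_+(A)$. For any nonzero $w\in W^+$, $Sw\in V^+\setminus\{0\}$, and hence
\[
  w^\dagger B w = w^\dagger S^\dagger A S w = (Sw)^\dagger A(Sw) > 0,
\]
so $B$ is positive definite on $W^+$, giving $n_+(B)\ge n_+(A)$. Applying the identical argument to the congruence $A=(S^{-1})^\dagger B\, S^{-1}$ reverses the inequality and yields $n_+(A)=n_+(B)$. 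The same argument with a maximal negative-definite subspace (or, alternatively, using $n_-+n_0+n_+=N$ together with the two equalities already established) gives $n_-(A)=n_-(B)$, completing the proof.

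I do not expect any genuine obstacle here: this is a classical result and the only thing one must be careful about is that the transported subspace $S^{-1}(V^+)$ does not collapse in dimension, which is exactly where non-singularity of $S$ is used. The entire argument is symmetric in the roles of $A$ and $B$, so no separate equality chain is needed.
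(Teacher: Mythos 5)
Your proof is correct, but it takes a genuinely different route from the one in the paper. You use the classical variational argument: the kernel is transported isomorphically by $S$, and a maximal subspace on which the form is positive (resp.\ negative) definite is carried by $S^{-1}$ to a subspace of the same dimension on which the congruent form is definite of the same sign, so the inertia indices can only increase in both directions and hence agree. The paper instead gives a homotopy proof: it interpolates $A(s) = (\cos(s) I - i \sin(s) S^\dagger)\, A\, (\cos(s) I + i \sin(s) S^\dagger)$ between $A$ at $s=0$ and $S^\dagger A S$ at $s=\pi/2$, observes that the conjugating factors remain invertible along the path so $\dim\ker A(s)$ is constant, and concludes that no eigenvalue can cross zero, so the index is preserved. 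Each approach has its merits: the paper's argument fits the spirit of the rest of the paper (tracking eigenvalue crossings along one-parameter families), while yours is more elementary --- it requires no perturbation theory or continuity of eigenvalues, only the characterization of $n_\pm$ as the maximal dimension of a definite subspace, which is the same Courant minimax fact the paper invokes elsewhere; it also sidesteps the need to verify that the interpolating family stays within the class of non-singular congruences. The one ingredient you use without proof is precisely that variational characterization of the inertia (that $n_+(A)$ \emph{equals}, rather than merely bounds, the maximal dimension of a positive-definite subspace); this follows from the spectral theorem and is standard, so the proof is complete as written.
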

\begin{proof}
  This is an old result and many proofs are known. We include a short
  one here for completeness. Consider the one-parameter family of
  Hermitian matrices $A(s) = (\cos(s) I - i \sin(s) B^\dagger) A
  (\cos(s) I + i \sin(s) B^\dagger).$ The matrices $(\cos(s) I - i
  \sin(s) B^\dagger)$ and $(\cos(s) I + i \sin(s) B^\dagger)$ are both
  invertible, so $\dim(\ker(A(s))$ is independent of $s$, and there
  are no eigenvalue crossings. This gives a homotopy from $A(0)=A$ to
  $A(\frac\pi{2})=B^\dagger A B$ that does not change the index, so
  the indices of the two must be equal.
\end{proof}

\begin{prop}
The subspace $S_{\rm fixed}$ is a maximal subspace of fixed index: The 
operator $P_{S_{\rm fixed}} {\mathcal L}$  (considered as 
an operator from $S_{\rm fixed}$ to $S_{\rm fixed}$) has index
\[
\ind(P_{S_{\rm fixed}} {\mathcal L}) = (c(\Gamma_+)-1,1,c(\Gamma_-)-1)
\] 
independent of the choice of edge weights. 
\end{prop}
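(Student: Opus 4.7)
The plan is to invoke Lemma~\ref{lem:block}, which establishes pairwise $\mathcal{L}$-orthogonality of $S_{\rm fixed}^+$, $S_{\rm fixed}^-$, and $S_{\rm fixed}^0$. With respect to the resulting decomposition $S_{\rm fixed}=S_{\rm fixed}^+\oplus S_{\rm fixed}^-\oplus S_{\rm fixed}^0$, the operator $P_{S_{\rm fixed}}\mathcal{L}$ is block-diagonal and its index splits as the sum of the three block indices. The block on $S_{\rm fixed}^0=\mathrm{span}(\mathbf{1})$ contributes $(0,1,0)$, since $\mathcal{L}\mathbf{1}=\mathbf{0}$, so the remaining task is to determine the sign pattern of the two other blocks, of dimensions $c(\Gp)-1$ and $c(\Gm)-1$ respectively.

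For $w\in S_{\rm fixed}^+$, $w$ is a linear combination of the indicators $v^{i,+}$, hence constant on each component of $\Gp$, so $\mathcal{L}_+ w=\mathbf{0}$ and $\langle w,\mathcal{L} w\rangle=\langle w,\mathcal{L}_- w\rangle$. The standard identity $-\langle v,\mathcal{L}(\G')v\rangle=\sum_{(i,j)\in E(\G')}\gamma_{ij}(v_i-v_j)^2$ shows that $\mathcal{L}_-$ is semidefinite on all of $\mathbb{R}^N$, with the sign determined purely by the universal sign of its edge weights. The mirror computation on $S_{\rm fixed}^-$, where $\mathcal{L}_- w=\mathbf{0}$ and the quadratic form reduces to $\langle w,\mathcal{L}_+ w\rangle$, gives semidefiniteness of the opposite sign. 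Thus each nonzero block is semidefinite of a sign determined only by the combinatorial data, and the dimensions of the blocks give the correct counts in the claimed index triple.

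The one real obstacle is promoting weak semidefiniteness to strict definiteness, and this is where the global connectedness of $\Gamma$ enters. If $w\in S_{\rm fixed}^+$ kills the quadratic form then $w\in\ker\mathcal{L}_-$, so $w$ is constant on each component of $\Gm$ as well as on each component of $\Gp$. Given any two vertices of $\Gamma$, connectedness supplies a path between them, and propagating the constancy along this path (alternately using constancy on a positive component and constancy on a negative component at each vertex) forces $w$ to be globally constant, i.e.\ a scalar multiple of $\mathbf{1}$. The defining orthogonality $\langle w,\mathbf{1}\rangle=0$ built into $S_{\rm fixed}^+$ then kills $w$; the argument for $S_{\rm fixed}^-$ is symmetric. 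Summing the three block contributions yields the claimed triple, and since no step used the magnitudes of the edge weights, only their signs, the index is fixed. Maximality is automatic from $\dim S_{\rm fixed}=c(\Gp)+c(\Gm)-1$, which saturates the Courant minimax bound recalled in the paragraph preceding the proposition.
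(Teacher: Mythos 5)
Your argument is correct, and it reaches the conclusion by a genuinely different route from the paper's. Both proofs start from Lemma~\ref{lem:block}, i.e.\ from the $\L$-orthogonality of $S_{\rm fixed}^{+}$, $S_{\rm fixed}^{-}$, $S_{\rm fixed}^{0}$, so that the inertia of the quadratic form $\langle v,\L v\rangle$ on $S_{\rm fixed}$ is the sum of the inertias of the three blocks. From there the paper identifies the block $\L_{-}\oplus\L_{0}$ (resp.\ $\L_{0}\oplus\L_{+}$) as Sylvester-congruent to the ordinary positive-weight Laplacian of the graph obtained by contracting the negative (resp.\ positive) edges, and imports the classical index of such a Laplacian on a connected graph. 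You instead evaluate the quadratic form directly: using $\L=\L(\Gp)+\L(\Gm)$, the vanishing of $\L(\Gp)$ on vectors constant on components of $\Gp$, and the identity $-\langle v,\L v\rangle=\sum\gamma_{ij}(v_i-v_j)^2$ to get semidefiniteness of each block, then upgrading to strict definiteness via $\ker\L(\Gp)\cap\ker\L(\Gm)={\rm span}(\mathbf{1})$ --- which is precisely the connectedness argument the paper deploys in Lemma~\ref{lem:transverse}. Your version is more elementary and self-contained (no separate appeal to Sylvester's law, no quotient graph); the paper's version has the structural payoff of exhibiting the fixed blocks as Laplacians of the contracted graphs, tying into the deletion--contraction and asymptotic-spectrum results of Section~\ref{sec:refinements}.

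Two small cautions. First, the three subspaces are $\L$-orthogonal but in general \emph{not} orthogonal in the standard inner product, so $P_{S_{\rm fixed}}\L$ need not be literally block-diagonal as an operator; what is block-diagonal is the bilinear form, and that is all the inertia computation requires (the paper's proof carries the same imprecision). Second, you should state explicitly which sign lands where: your computation shows the $S_{\rm fixed}^{+}$ block is positive definite and the $S_{\rm fixed}^{-}$ block is negative definite, i.e.\ $n_+=c(\Gp)-1$ and $n_-=c(\Gm)-1$. This agrees with the final line of the paper's own proof and with the lower bounds in Theorem~\ref{thm:main}; note that the triple displayed in the Proposition statement has its first and third entries transposed relative to the paper's convention ${\rm ind}=(n_-,n_0,n_+)$, so ``the claimed index triple'' should be read accordingly.
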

\begin{proof}
 Using Lemma \ref{lem:block} it follows that $P_{S_{\rm fixed}}
 {\mathcal L}$ has the following block structure:
\[
 P_{S_{\rm fixed}} {\mathcal L}P_{S_{\rm fixed}} = {\mathcal L}_- \oplus {\mathcal L}_0 \oplus  {\mathcal L}_+
\]
where ${\mathcal L}_+= P_{S_{\rm fixed}^+} {\mathcal L}$, ${\mathcal
  L}_-= P_{S_{\rm fixed}^-} {\mathcal L}$ and ${\mathcal L}_0=
P_{S_{\rm fixed}^0} {\mathcal L}=0.$

 Consider the block ${\mathcal L}_- \oplus {\mathcal L}_0 $. This
 arises by orthogonal projection of ${\mathcal L}$ onto the set of
 vectors constant on $\Gm$, and thus this matrix is Sylvester
 equivalent to the following graph Laplacian: one contracts on the
 negative edges, giving a graph with vertices corresponding to the
 components of $\Gm$.  This is a standard graph Laplacian on a
 connected graph with $c(\Gm)$ vertices and thus has $c(\Gm)-1$
 negative eigenvalues and one zero eigenvalue. Similarly $ {\mathcal
   L}_0 \oplus {\mathcal L}_+$ is Sylvester equivalent to the negative
 of the graph Laplacian given by contracting on the positive
 edges. This has, by the same argument, $c(\Gp)-1$ positive
 eigenvalues and one zero eigenvalue. The zero eigenvalue is obviously
 counted twice in this argument, giving
\[
{\rm ind}\left( P_{S_{\rm fixed}} {\mathcal L}P_{S_{\rm fixed}}\right) = (c(\Gm)-1,1,c(\Gp)-1).
\]

\end{proof}

\section{Refinements}\label{sec:refinements}

In this section, we present some refinements of
Theorem~\ref{thm:main}. In Section~\ref{sec:dct} we present the
Deletion-Contraction Theorem, which allows us to obtain recursive
formulas for the $\M(\G(t))$ polynomial in terms of ``smaller'' graphs
obtained by deleting and contracting edges; this allows us to give a
precise characterization of the bifurcation structure of $\L(\G(t))$
in many cases.  In Section~\ref{sec:spectrum}, we discuss the
asymptotics of the individual eigenvalues of $\L(\G(t))$ in the limits
$t\to0,\infty$.  Recall that in the statement and proof of
Theorem~\ref{thm:main}, we discuss the number of eigenvalues in the
left- and right-hand half-planes in these two limits; here we give
more precise statements of the locations of these eigenvalues.

\newcommand{\del}[2]{{#1}_{/{#2}}}
\newcommand{\con}[2]{{#1}_{.{#2}}}
\newcommand{\concon}[3]{{#1}_{.{#2}.{#3}}}
\newcommand{\condel}[3]{{#1}_{.{#2}\setminus{#3}}}
\newcommand{\deldel}[3]{{#1}_{\setminus{#2}\setminus{#3}}}
\newcommand{\delcon}[3]{{#1}_{\setminus{#2}.{#3}}}

\subsection{ Deletion--contraction theorem}\label{sec:dct}

\begin{define}\label{def:special}
  Let $\G = (V,E)$ be a weighted multigraph (loops and multiple edges
  allowed), and $e\in E(\G)$ an edge.  Let $\del\G e$ denote the graph
  obtained by removing edge $e$: the graph with vertex set $V(\G)$ and
  edge set $E(\G)-e$.  If $e$ is an edge which is not a loop let
  $\con\G e$ denote the graph obtained by contracting on the edge,
  which is obtained as follows:
\begin{itemize}
 \item If edge $e$ connects vertices $v_1$ and $v_2$ then $v_1$ and $v_2$ are identified as a single vertex 
$v^*$.
\item Each edge connecting a vertex to $v_1$ or $v_2$ becomes an edge connecting that vertex to $v^*$ 
\item Edge $e$ is removed from the edge set. 
\end{itemize}

Note that if $\G$ is a simple graph and $v_1$ and $v_2$ are part of a triangle then the contracted 
graph  $\con\G e$ will have multiple edges. Similarly if there are multiple edges connecting 
$v_1$ and $v_2$ then  $\con\G e$ will have loops. 
\end{define}

We can now state the Deletion-Contraction Theorem.

\begin{thm}[Deletion-Contraction theorem]\label{thm:dct}
  Let $\G$ be a weighted multigraph with $e\in E(\G)$.  Then $\M(\G)$ can be computed by applying the 
following rules
\begin{itemize}
\item If $e$ is not a loop then $\label{eq:dct} \M(\G) = \M(\del\G e) + \gamma_e \M(\con\G e).$
\item If $e$ is a loop then $\M(\G) = \M(\del\G e).$
\item If $\G$ is disconnected then  $\M(\G) =0$
\end{itemize}
\end{thm}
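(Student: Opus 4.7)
The plan is to prove all three cases via a direct combinatorial argument, leveraging the Weighted Matrix Tree Theorem (Lemma~\ref{lem:mtt}) as the bridge between the analytic definition of $\M(\G)$ and a sum over spanning trees. The first step is to observe that this theorem extends verbatim to weighted multigraphs, provided we interpret $\L(\G)_{ij}$ as the sum of weights of \emph{all} edges joining $i$ and $j$ (with loops not contributing, since they cancel between the off-diagonal and diagonal entries of $\L$), and regard a spanning tree as a choice of $N-1$ edges (distinguished from parallel edges) that connects $V(\G)$ acyclically. With this in hand, $\M(\G) = \sum_{T \in \ST(\G)} \pi(T)$ for any connected weighted multigraph.

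The three cases are then handled in increasing order of difficulty. For the disconnected case, $\ST(\G) = \emptyset$, so the sum is empty and $\M(\G) = 0$; this is also consistent with the spectral definition, since a disconnected graph has $\dim \ker \L(\G) = c(\G) \geq 2$, forcing $\prod_{i=2}^N \lambda_i = 0$. For the loop case, a loop is a cycle of length one and thus cannot appear in any spanning tree (which is acyclic), hence $\ST(\G) = \ST(\del\G e)$ with identical weights $\pi(T)$, giving $\M(\G) = \M(\del\G e)$.

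The main case is the non-loop deletion-contraction identity. Partition $\ST(\G) = \ST^e(\G) \sqcup \ST^{\bar e}(\G)$, where $\ST^e$ (resp.\ $\ST^{\bar e}$) consists of the spanning trees containing (resp.\ not containing) the edge $e$. The map $T \mapsto T$ is an obvious weight-preserving bijection between $\ST^{\bar e}(\G)$ and $\ST(\del\G e)$, since not using $e$ makes its presence or absence irrelevant. For $\ST^e(\G)$, I will define a bijection to $\ST(\con\G e)$ by sending $T$ to the multigraph $T/e$, obtained by contracting $e$ in $T$; acyclicity is preserved, and the edge count drops by one, matching the fact that $\con\G e$ has one fewer vertex. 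The inverse map lifts a spanning tree of $\con\G e$ back to $\G$ by adjoining $e$. Under this bijection, $\pi(T) = \gamma_e \cdot \pi(T/e)$, which yields $\sum_{T \in \ST^e(\G)} \pi(T) = \gamma_e \, \M(\con\G e)$. Adding the two pieces gives the claimed identity.

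The main obstacle is purely bookkeeping: one must verify that the bijection $T \mapsto T/e$ is well-defined when the contraction $\con\G e$ contains loops or multiple edges (so that distinct edges of $\G$ descend to distinct parallel edges in $\con\G e$, and the spanning tree structure is preserved). The loop case of the theorem then ensures that any loops created by contraction do not spuriously appear in any spanning tree, so the count on the right-hand side is correct. Once this is checked, the three bullets follow immediately, and the theorem is proved.
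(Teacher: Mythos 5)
Your proof is correct; the paper itself supplies no argument for this theorem but simply defers to Godsil and Royle, and the spanning-tree partition you give --- trees avoiding $e$ in weight-preserving bijection with $\ST(\del\G e)$, trees containing $e$ contracting bijectively onto $\ST(\con\G e)$ with $\pi(T)=\gamma_e\,\pi(T/e)$ --- is precisely the standard proof found in that reference. The one point that genuinely needs care, which you correctly flag, is extending the weighted matrix tree theorem to multigraphs (parallel edges summing into $\L$, loops contributing nothing) so that the recursion remains valid after contractions create loops and multiple edges.
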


\begin{proof}
  The deletion-contraction recursion is well-known --- see Chapter
  13.2 of the text of Godsil and Royle~\cite{godsil.royle} for one
  proof.
\end{proof}

\begin{figure}[ht]
\begin{centering}
  \includegraphics[height=1in]{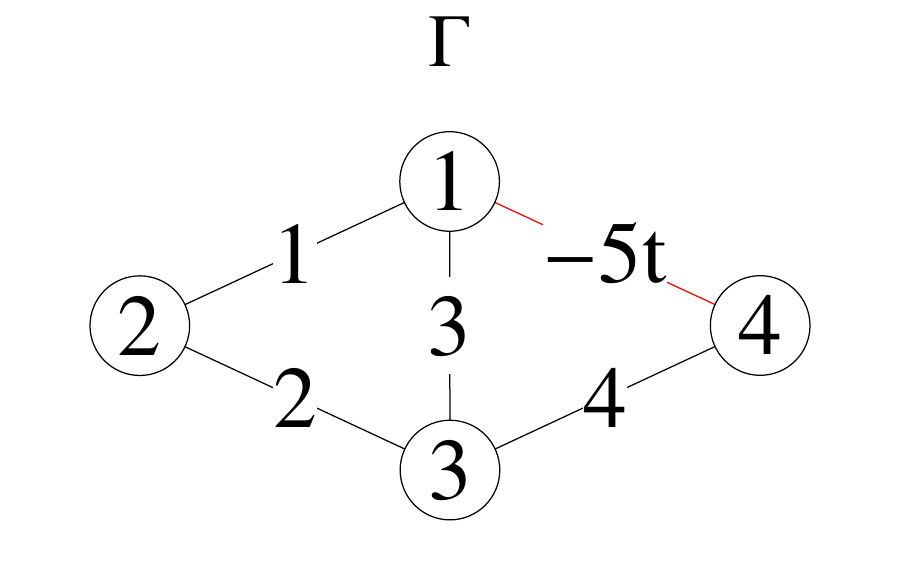}%
  \includegraphics[height=1in]{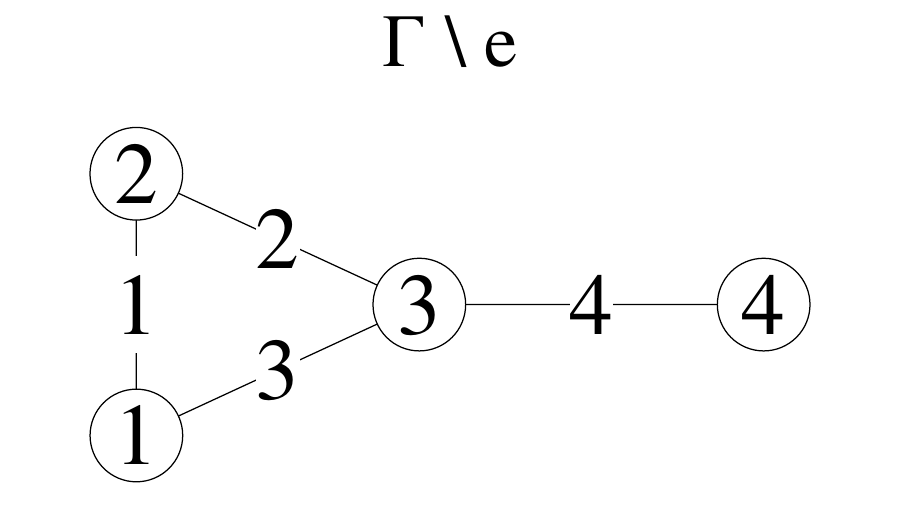}%
  \includegraphics[height=1in]{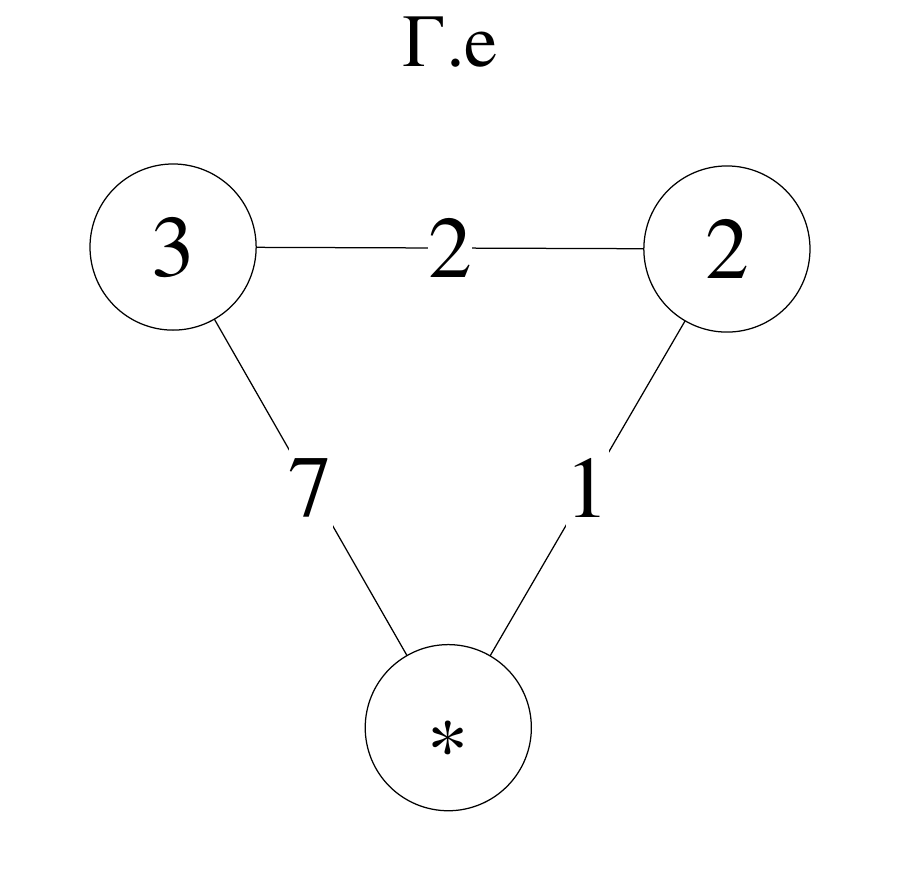}
  \caption{The graphs $\G; \G\setminus e; \G.e$, where $e$ is chosen
    to be the edge with negative weight.}
  \label{fig:dct}
\end{centering}
\end{figure}

\begin{example}
  As an example, let us consider the graphs given in
  Figure~\ref{fig:dct} (here, think of the $t$ as a symbol to separate
  out the terms containing the special edge).  First, notice that $\G$
  has three spanning trees not containing $e$, of weights $8,12,24$,
  and five spanning trees containing $e$, of weights $-10t, -15t,
  -20t, -30t, -40t$.  By Lemma~\ref{lem:mtt},
  \begin{equation*}
    \M(\G) = 44 - 115 t.
  \end{equation*}
  Notice that every spanning tree of $\G$ that does not contain $e$ is
  also a spanning tree of $\G\setminus e$, and thus
  \begin{equation*}
    \M(\G\setminus e) = 44.
  \end{equation*}
  On the other hand, $\Gamma.e$ has three spanning trees of weights
  $2,7,14$.  Therefore we have
  \begin{equation*}
    \M(\G.e) = 23.
  \end{equation*}
  And we see that 
  \begin{equation*}
    \M(\G) = 44-115t = 44 + (-5t) (23)= \M(\del\G e) - \g_e\M(\con\G e).
  \end{equation*}
\end{example}


In the context of dynamical systems, an important distinction to be
made is that between graphs for which $n_+(\L(\G)) = 0$ and those for
which $n_+(\L(\G)) > 0$; the former are called {\em stable} and the
latter {\em unstable}.  This notion comes from the fact that if we
consider the ordinary differential equation
\begin{equation*}
  \dot x = \L(G)x, 
\end{equation*}
then the origin is stable to perturbations iff $n_+(\L(G)) = 0$; if
not, then perturbations move away from the origin at an exponential
rate.

Moreover, as proved above, when we consider the homotopy $\G(t) = \G_+
+ t\G_-$, the function $n_+(\L(\G(t)))$ is a non-decreasing function
of $t$.  From this, it follows that if we define
\begin{equation*}
  \tstar(\G) := \sup_{t\ge 0} n_+(\L(\G(t))) = 0,
\end{equation*}
then for $t \le \tstar(\G)$, the Laplacian is stable, and for
$t>\tstar(\G)$, the Laplacian is unstable---in short, it undergoes a
dynamical {\em bifurcation}.

Two facts follow immediately from Theorem~\ref{thm:main}: first, that
$\tstar(\G) > 0$ if and only if $\G_+$ is connected, and $t^*(\G) <
\infty$ if and only if $\G_- \neq 0$.  We will concentrate the most in
what follows on the case of $\tstar(\G) \in (0,\infty)$.

We also point out that rigid graphs undergo no bifurcation; in fact
$n_+(\G(t))$ is constant on $\{t>0\}$.  From the bifurcation point of
view, then, these are the least interesting cases.


We now work out some special cases.

\subsubsection{One negative edge}

Let us first consider the case where $\Gp$ is connected, and there is
exactly one negative edge in $\Gm$, call it $e$.  Then we have
\begin{equation*}
  \M(\G(t)) = \M(\Gp) - t \av{\g_e} \M(\G.e).
\end{equation*}
This means that
\begin{equation*}
  \tstar = \frac{\M(\Gp)}{\av{\g_e} \M(\G.e)}.
\end{equation*}
Notice that this formula gives $t^\star>0$; since $\Gp$ and $\G.e$
have non-negative entries, they will have opposite signs, since their
dimensions differ by one~(q.v.~\eqref{eq:defofM}).  

%
%
%
%
%
%
%



Intuitively, we expect that the denser a graph is, the less powerful
each individual edge would be.  We present a couple of examples.

\begin{example}[Ring graph]
  Consider the case of the ring graph $R_N$, i.e. $V(R_N) = \{1,\dots,N\}$ and 
\begin{equation*}
  (R_N)_{ij} = \begin{cases} 1,& j = i\pm1 \pmod N,\\ 0,&\mbox{else.}\end{cases}
\end{equation*}
We also define the path graph $P_N = R_N \setminus \{1,N\}$.  Choose
any edge of $R_N$ and flip its sign to minus one.  According to the
Proposition above, the homotopy $\L(\G(t))$ will lose stability at
\begin{equation*}
  \tstar = \frac{\M(\Gamma\setminus e)}{\M(\Gamma.e)} = \frac{\M(P_N)}{\M(R_{N-1})}.
\end{equation*}
It is easy to see that $\M(R_N) = N, \M(P_N) = 1$, so we have
\begin{equation*}
  t^* = \frac{1}{N-1}.
\end{equation*}
\end{example}

\begin{example}[Complete graph] 
  Consider the complete graph $K_N$.  Again choose any edge, then the
  deletion operators give graphs $OG_N, DR_{N-1}$.  $OG_N$ is the complete
  graph minus one edge; $DR_{N}$ is a complete graph on $N$ nodes
  with all of the edges going to one distinguished vertex having twice
  the weight.

  Using standard counting arguments, we see that
  \begin{equation*}
    \M(OG_N) = (N-2) N^{N-3},\quad \M(DR_N) = 2(N+1)^{N-2},
  \end{equation*}
  giving
  \begin{equation*}
    \tstar = \frac{\M(OG_N)}{\M(DR_{N-1})} = \frac{(N-2) N^{N-3}}{2 N^{N-3}} = \frac{N-2}2.
  \end{equation*}
  Of course, this can be obtained by other means; in fact, one can
  compute that the eigenvalues of $\G(t)$ are
  \begin{equation*}
    \{0\}\cup \{-N\}^{(N-2)}\cup \{2 t + (N-2)\},
  \end{equation*}
  and the computation would also follow from this.
\end{example}

\begin{conjecture}
  These are the extreme cases; for any graph $\Gamma$ with $\Gp$
  connected and $\av{E(\Gm)} = 1$, we have $$\tstar(\Gamma) \in [(N-1)^{-1},
    (N-2)/2].$$
\end{conjecture}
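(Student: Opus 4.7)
The plan is to recognize $\tstar$ as a ratio of spanning-tree counts, reinterpret this ratio as an effective-resistance statement on the underlying unit-conductance electrical network, and then invoke two classical inequalities. I assume all edge weights have magnitude one, in line with the two worked examples in this subsection; the extension to arbitrary positive weights on the non-negative edges is the first thing I would revisit afterwards.

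First I would combine the displayed formula $\tstar(\Gamma) = \M(\Gp)/(\av{\g_e}\M(\Gamma.e))$ with Lemma~\ref{lem:mtt} and the standard bijection between spanning trees of $\Gamma.e$ and spanning trees of $\Gamma$ that contain $e$. With $\av{\g_e}=1$, this lets me rewrite
$$\tstar(\Gamma) = \frac{1 - p_e}{p_e},$$
where $p_e$ denotes the probability that a uniformly random spanning tree of $\Gamma$ contains $e$. Kirchhoff's classical random-spanning-tree identity then identifies $p_e$ with the effective resistance $R_{\mathrm{eff}}^{\Gamma}(u,v)$ across the endpoints $u,v$ of $e$, computed in the network with unit conductance on every edge. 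The rest of the argument is purely electrical.

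For the \emph{upper} bound $\tstar(\Gamma) \le (N-2)/2$, I would apply Rayleigh's monotonicity principle to the inclusion $\Gamma \subseteq K_N$: this gives $R_{\mathrm{eff}}^{\Gamma}(u,v) \ge R_{\mathrm{eff}}^{K_N}(u,v) = 2/N$, hence $p_e \ge 2/N$, with equality realized by $\Gamma = K_N$. For the \emph{lower} bound $\tstar(\Gamma) \ge 1/(N-1)$, I would use that $\Gp = \Gamma\setminus e$ is connected by hypothesis, so it contains a $u$-to-$v$ path of at most $N-1$ edges; the series-resistance bound then gives $R_{\mathrm{eff}}^{\Gamma\setminus e}(u,v) \le N-1$. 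Since $e$ has unit conductance and sits in parallel with $\Gamma \setminus e$,
$$R_{\mathrm{eff}}^{\Gamma}(u,v) = \frac{R_{\mathrm{eff}}^{\Gamma\setminus e}(u,v)}{1 + R_{\mathrm{eff}}^{\Gamma\setminus e}(u,v)} \le \frac{N-1}{N},$$
so $p_e \le (N-1)/N$ and $\tstar \ge 1/(N-1)$, with equality realized by the ring $\Gamma = R_N$.

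The main obstacle is really just recognizing the effective-resistance interpretation; once that link is made, both bounds collapse to classical electrical-network facts. The remaining subtlety I would want to pin down concerns the hypotheses: if the positive weights are not all unity, the ratios $2/N$ and $(N-1)/N$ should be replaced by weighted Rayleigh bounds involving $\av{\g_e}$ and the network conductances, while multi-edges between the endpoints of $e$ can in principle drive $\tstar$ below $1/(N-1)$. The clean bounds in the stated conjecture therefore seem to be genuinely a statement about \emph{simple}, uniformly-weighted graphs, and the most likely sharpening of the conjecture is to allow arbitrary conductances and replace the constants by their natural weighted analogues.
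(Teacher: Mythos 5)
The statement you are proving is left as an open \emph{conjecture} in the paper --- the authors only verify the two endpoint cases ($R_N$ and $K_N$) by direct computation of $\M(P_N)/\M(R_{N-1})$ and $\M(OG_N)/\M(DR_{N-1})$, and offer no argument for general $\G$. Your electrical-network argument appears to settle the conjecture in its intended reading (simple graph, all weights $\pm1$, which is the setting of both worked examples and is forced by the form of the bounds, as your triangle-type scaling observation shows). The chain of reductions is sound: with $\av{\g_e}=1$ the paper's formula $\tstar=\M(\Gp)/\M(\G.e)$ becomes $t_{\bar e}/t_e=(1-p_e)/p_e$ via the contraction/deletion bijections behind Lemma~\ref{lem:mtt}; Kirchhoff's identity $p_e=R_{\mathrm{eff}}^{\G}(u,v)$ (unit conductances) and the parallel law $1/R_{\mathrm{eff}}^{\G}=1/R_{\mathrm{eff}}^{\Gp}+1$ then give the clean reformulation
\begin{equation*}
\tstar(\G)=\frac{1}{p_e}-1=C_{\mathrm{eff}}^{\Gp}(u,v),
\end{equation*}
the effective conductance across the endpoints of $e$ in the positive subgraph. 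Rayleigh monotonicity applied to $K_N\setminus e\supseteq\Gp\supseteq(\text{a }u\text{--}v\text{ path of length}\le N-1)$ yields $2/(N-2)\le R_{\mathrm{eff}}^{\Gp}(u,v)\le N-1$, i.e.\ exactly $\tstar\in[(N-1)^{-1},(N-2)/2]$, with the extremes attained by the ring and the complete graph as required. Two small points to nail down in a write-up: state explicitly that simplicity of $\G$ is what licenses both the comparison $\G\subseteq K_N$ and the absence of a second (positive) edge parallel to $e$; and record the weighted generalization you sketch at the end, since $\tstar=C_{\mathrm{eff}}^{\Gp}(u,v)/\av{\g_e}$ holds verbatim for arbitrary positive conductances and makes precise which constants must be replaced.
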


\begin{prop}
  Assume that $\av{E(\Gm)} = 2$ and that the two edges $e,f\in E(\Gm)$
  do not share a vertex.  Then
  \begin{equation*}
    \M(\G(t)) = t^2 \av{\g_e\g_f}\M(\concon \G e f) - t\l(\av{\g_e}\M(\condel \G e f) + \av{\g_f}\M(\delcon\G e f)\r) + \M(\deldel \G e f),
  \end{equation*}
  where we have defined all of these terms in
  Definition~\ref{def:special}.  Also, $t^*(\Gamma)$ is the minimal
  positive root of this polynomial.
\end{prop}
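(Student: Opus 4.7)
The plan is to apply the Deletion--Contraction recurrence (Theorem~\ref{thm:dct}) twice in succession --- once to $e$, once to $f$ --- and then to identify $\tstar(\G)$ as the smallest positive root using the transversality established in Lemma~\ref{lem:transverse}.

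First I would apply the recurrence to the homotopy $\G(t) = \Gp + t\Gm$ along the edge $e$, whose weight in $\G(t)$ is $t\g_e = -t\av{\g_e}$:
\begin{equation*}
\M(\G(t)) = \M(\G(t)\setminus e) - t\av{\g_e}\,\M(\G(t).e).
\end{equation*}
In each of $\G(t)\setminus e$ and $\G(t).e$ the only remaining negative edge is $f$, still with weight $-t\av{\g_f}$. Applying Theorem~\ref{thm:dct} to $f$ in each piece and collecting gives four terms. Because $e$ and $f$ share no vertex, contracting one of them does not turn the other into a loop and the operations $\cdot\, e$, $\setminus e$, $\cdot\, f$, $\setminus f$ commute pairwise on $\G$; hence the four iterated graphs are well-defined independent of the order of operations and coincide with $\deldel{\G}{e}{f}$, $\condel{\G}{e}{f}$, $\delcon{\G}{e}{f}$, $\concon{\G}{e}{f}$, none of which contains a negative edge. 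Grouping by powers of $t$ then yields the quadratic displayed in the statement.

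For the claim about $\tstar(\G)$, I would invoke the framework from the proof of Theorem~\ref{thm:main}. By Lemma~\ref{lem:transverse} the nonzero eigenvalues of $\L(\G(t))$ restricted to $\mathbf{1}^\perp$ are non-decreasing in $t$ and cross zero only transversely, and the set of $t>0$ at which such a crossing occurs coincides with the positive real roots of $\M(\G(t))$, with multiplicity equal to the number of eigenvalues crossing simultaneously. Assuming $\Gp$ is connected --- the only regime in which $\tstar>0$ at all --- one has $n_+(\L(\G(0^+))) = 0$, and $n_+$ strictly increases at each crossing; therefore the supremum of $t$ with $n_+(\L(\G(t)))=0$ is precisely the smallest positive root of the quadratic obtained above.

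The only delicate point is sign bookkeeping in the Deletion--Contraction step: the factor $\g_e$ contributed by Theorem~\ref{thm:dct} is $-t\av{\g_e}$ in the homotopy rather than $+t\av{\g_e}$, which is what produces the minus signs in the linear coefficient. Verifying that the non-vertex-sharing hypothesis indeed makes all four iterated graphs well-defined regardless of the order of operations is also needed, but is routine.
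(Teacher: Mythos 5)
Your proposal is correct and follows essentially the same route as the paper: two successive applications of the Deletion--Contraction theorem (to $e$, then to $f$), using the fact that the non-vertex-sharing hypothesis makes the four iterated graphs well-defined, and then substituting $\g_e = -t\av{\g_e}$, $\g_f = -t\av{\g_f}$. You additionally spell out the argument for why $\tstar(\G)$ is the minimal positive root (via Lemma~\ref{lem:transverse} and the monotonicity of $n_+$), a point the paper's proof leaves implicit, and your caveat that this requires $\Gp$ connected is appropriate.
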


\begin{proof}
  Let $\Gamma$ be any signed graph; from Theorem~\ref{thm:dct}, we
  have 
  \begin{equation*}
    \M(\G) = \g_e \M(\con\G e) + \M(\del \G e),
  \end{equation*}
  and
  \begin{equation*}
    \M(\con\G e)  = \g_f \M(\con{(\con\G e)}f) + \M(\del{(\con\G e)}f),    \quad \M(\del\G e)  = \g_f \M(\con{(\del\G e)}f) + \M(\del{(\del\G e)}f),
  \end{equation*}
  and iterating~\eqref{eq:dct} gives
  \begin{equation*}
    \M(\G) = \g_e\g_f \concon\G e f + \g_e \M(\condel\G f e) + \g_f \M(\condel \G e f) + \M(\deldel\G e f).
  \end{equation*}
  If the edges $e$ and $f$ are in $E(\G_-)$, then $\g_{e} = -t
  \av{\g_{e}}, \g_f = -t\av{\g_f}$, and the result follows.
\end{proof}

\begin{remark}
  Note that this polynomial has positive roots by
  Theorem~\ref{thm:main}.  Consider, for example, an unweighted graph;
  this implies~(q.v.~Remark~\ref{rem:logconcave}) that
  \begin{equation*}
    \l(\M(\Gamma.e\setminus f) + \M(\Gamma\setminus e.f)\r)^2 > 4 \M(\Gamma.e.f)\M(\Gamma\setminus e\setminus f).
  \end{equation*}
\end{remark}

%
%
%
%
%
%
\subsection{Detailed Eigenvalue Asymptotics}\label{sec:spectrum}

We now consider more detailed asymptotics of the eigenvalue spectrum
in the limits in which the strength of the negative edges is much
weaker or much stronger than the strength of the positive
edges. Specifically we consider the one-parameter family of graph
Laplacians
\begin{equation*}
  \L(\G(t)) = \L(\Gp) - t \L(\Gm),
\end{equation*}
 in the limits $ t \rightarrow 0^+$ and $t \rightarrow \infty$.  The
 spectrum splits naturally into two parts, which correspond to the
 eigenvalues of graph Laplacians on the deleted and contracted
 graphs. This is an analog on the level of the spectrum of the
 contraction-deletion algorithm for computing the crossing polynomial:
 the crossing polynomial is given by the sum of the crossing
 polynomials for the contracted and deleted graphs, while the spectrum
 is given (asymptotically!) by the union of the deleted and contracted
 graphs.

\begin{thm}
Suppose that $t$ is large and positive. Then ${\mathcal L}(t)$ has exactly
$N-c(\Gm)$ negative eigenvalues, $c(\Gm)-1$ positive eigenvalues and one 
zero eigenvalue. If we take the convention that eigenvalues are numbered
in decreasing order then to leading order in $t$ the  $c(\Gm)-1$ positive 
eigenvalues are given by 
\[
\lambda_i(\L_\Gamma) = t \lambda_i(\L_{\Gamma_-}) + O(1) \qquad\qquad i\in \{1\ldots c(\Gm)-1\},  
\]
The $N-c(\Gm)$ negative eigenvalues are given to leading order by
\[
\lambda_i(\L_\Gamma) = \tilde\lambda_i(\L_{\Gamma\cdot-}) + o(1) \qquad\qquad i\in \{N\ldots c(\Gm)+1\},
\]
where $(L_{\Gamma\cdot-}$ is the graph formed by contracting on the negative 
edges. Here $ \tilde \lambda_j(\L_{\Gamma\cdot-})$ are solutions to 
\[
\L_{\Gamma^-} \vec v = \lambda S \vec v 
\]
where $L_{\Gamma^-}$ is the graph Laplacian formed by contracting on
the negative edges and $S$ the contracted inner product: the diagonal matrix 
with entries $S_{ii} =|V(\Gamma_{-,i})|$.
\end{thm}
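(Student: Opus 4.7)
The plan is to apply matrix perturbation theory to the self-adjoint pencil $\L(\G(t))=\L(\Gp)+t\L(\Gm)$ in the regime $t\to\infty$, exploiting the fact that $\L(\Gm)$ is semi-definite with a large kernel. First I would set $K:=\ker\L(\Gm)$, which has dimension $c(\Gm)$ and is spanned by the characteristic vectors $u_1,\dots,u_{c(\Gm)}$ of the components $\Gamma_-^{(1)},\dots,\Gamma_-^{(c(\Gm))}$. The orthogonal decomposition $\R^N=K\oplus K^\perp$ is invariant under $\L(\Gm)$ but not under $\L(\Gp)$, and I expect the spectrum of $\L(\G(t))$ to split accordingly into $N-c(\Gm)$ eigenvalues that scale linearly in $t$ together with $c(\Gm)$ eigenvalues that remain bounded as $t\to\infty$.

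For the linearly growing eigenvalues I would rescale by $s=1/t$ and examine the pencil $s\L(\Gp)+\L(\Gm)$ near $s=0^+$. On $K^\perp$ the operator $\L(\Gm)$ is definite with nonzero eigenvalues $\mu_1,\dots,\mu_{N-c(\Gm)}$; standard regular analytic perturbation theory then produces a smooth branch $\mu_j+O(s)$ for each $\mu_j$, which corresponds to an eigenvalue $t\mu_j+O(1)$ of $\L(\G(t))$. These account for the $N-c(\Gm)$ large eigenvalues, and their common sign is forced by the definite sign of $\L(\Gm)$ restricted to $K^\perp$.

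For the bounded branches I would use degenerate perturbation theory on $K$. Writing an eigenvector as $v=\sum_i c_i u_i+t^{-1}w+\cdots$ with $w\perp K$, substituting into $\L(\G(t))v=\lambda v$, and projecting onto $K$ annihilates the $t\L(\Gm)$ contribution and produces, to leading order, the generalized eigenvalue problem
\begin{equation*}
Mc=\lambda Sc,\qquad M_{ij}:=\ip{u_i}{\L(\Gp)u_j},\qquad S_{ij}:=\ip{u_i}{u_j}=|V(\Gamma_-^{(i)})|\,\delta_{ij}.
\end{equation*}
A direct computation identifies $M$ with the Laplacian of the contracted graph $\G\cdot-$ obtained by collapsing each component of $\Gm$ to a single vertex: for $i\neq j$, $M_{ij}$ is exactly the total weight of $\Gp$-edges running between $\Gamma_-^{(i)}$ and $\Gamma_-^{(j)}$, and the diagonal entries are forced by the row-sum identity $M\mathbf{1}=\mathbf 0$, inherited from $\L(\Gp)\mathbf{1}=\mathbf 0$. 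Because $\G$ is connected so is the contracted graph, hence $M$ is an honest positive-weight graph Laplacian on a connected graph: it has a one-dimensional kernel spanned by $\mathbf{1}$ and $c(\Gm)-1$ nonzero eigenvalues of definite sign. Combined with positive-definiteness of $S$, the pencil $(M,S)$ has the same inertia, which gives both the persistent zero eigenvalue of $\L(\G(t))$ and the $c(\Gm)-1$ bounded nonzero eigenvalues identified with the $\tilde\lambda_i$ in the statement.

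The main technical obstacle is justifying that the formal projection onto $K$ actually produces the claimed leading order with a genuine $o(1)$ error rather than a merely formal expansion. My preferred route is a Schur-complement reduction: relative to the decomposition $K\oplus K^\perp$, write $\L(\G(t))-\lambda I$ as a $2\times 2$ block operator and use that the $K^\perp$--$K^\perp$ block $t\L(\Gm)|_{K^\perp}+O(1)-\lambda I$ is uniformly invertible for bounded $\lambda$ once $t$ is large enough, with inverse of size $O(1/t)$. Eliminating the $K^\perp$ block then leaves an effective $c(\Gm)\times c(\Gm)$ operator of the form $M-\lambda S+O(1/t)$ on $K$, whose roots are exactly the bounded eigenvalues of $\L(\G(t))$ with $o(1)$ error. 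Once this reduction is in place, combining the two eigenvalue families recovers the inertia predicted at the upper bound of Theorem~\ref{thm:main} and finishes the proof.
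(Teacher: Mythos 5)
Your proposal is correct and follows essentially the same route as the paper: rescale to the pencil $\L(\Gm)+t^{-1}\L(\Gp)$, treat the $N-c(\Gm)$ nonzero eigenvalues of $\L(\Gm)$ by regular perturbation theory, and apply degenerate perturbation theory on $\ker\L(\Gm)$ to arrive at the generalized eigenvalue problem for the contracted Laplacian with the weighted inner product $S$. The only differences are that you justify the degenerate step with an explicit Schur-complement reduction where the paper cites Kato, and that your bookkeeping (correctly) attaches the $N-c(\Gm)$ growing branches to the positive eigenvalues and the $c(\Gm)-1$ bounded branches to the negative ones, which is the intended reading of the theorem even though the printed statement transposes those two counts.
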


\begin{proof}
The proof follows in a straightforward way from perturbation theory for 
eigenvalues of a symmetric matrix. The full graph Laplacian can be written 
\[
{\mathcal L}(t) = t \left({\mathcal L}(\Gamma_-) + t^{-1}{\mathcal L}(\Gamma_+)\right)
\]
so it suffices to understand the eigenvalues of ${\mathcal
  L}(\Gamma_-) + t^{-1}{\mathcal L}(\Gamma_+)$ for $t$ large. The
spectrum of ${\mathcal L}(\Gamma_-)$ consists of $c(\Gamma_-)$ zero
eigenvalues and $N-c(\Gamma_-)$ positive eigenvalues. For the non-zero
eigenvalues straightforward eigenvalue perturbation theory gives the
asymptotic above.

To understand how the $c(\Gamma_-)$-dimensional kernel breaks under
perturbation we must do a degenerate perturbation theory
calculation. Well-known results (again see Kato) show that to leading
order the eigenvalues are given by the eigenvalues of the reduced
matrix
\[
P_{\ker({\mathcal L}_-)} L_{\Gamma_+} P_{\ker({\mathcal L}_-)} 
\] 
where $P_{\ker({\mathcal L}_-)}$ is the orthogonal projection onto the
kernel of ${\mathcal L}_-$. It is straightforward to compute
$\ker({\mathcal L}_-)$: it consists of vectors that are {\em constant
  on components of ${\mathcal L}_-$.} We can identify a vector $\vec
w\in \ker({\mathcal L}_-)$ with a vector $\tilde w \in {\mathbb
  R}^{c(\Gamma_-)}$ by the following rule: if $w_i = \alpha$ for all
vertices in component $\Gamma_-,j$ then $\tilde w_j=\alpha$. Under
this identification the natural inner product on ${\mathbb R}^N$ maps
to the inner product
\[
\langle\tilde v, \tilde w\rangle = \sum_{i=1}^{c(\Gamma_-)} |V(\Gamma_{i,-})|\tilde v_i \tilde w_j.
\]  
In other words the there is one entry per component of $\Gamma_-$, and the 
inner product is diagonal with weights given by the number of vertices 
in the corresponding component of $\Gamma_i.$  It is straightforward to 
see that the matrix 
\[
P_{\ker({\mathcal L}_-)} L_{\Gamma_+} P_{\ker({\mathcal L}_-)} 
\] 
is exactly the Laplace matrix obtained by contracting on the negative edges 
of the graph, completing the proof. 
\end{proof}

\newcommand{\specstar}{{\mathrm {Spec}}^\star}

\begin{define}
  Given a matrix $A$, we define $\specstar(A)$ as the eigenvalues of
  the matrix $A$ restricted to the subspace $(1,1,1,\ldots,1)^\perp$
  of mean zero vectors.
\end{define}

\begin{remark}
 The previous theorem can be written in the following compact way
\[
 \specstar({\mathcal L}(t)) \approx \left\{\begin{array}{c} \
     \specstar(\Gm) \cup \specstar(\G_{\cdot -})\qquad t \rightarrow +\infty \\
     \specstar(\Gp) \cup \specstar(\G_{\cdot +})\qquad t\rightarrow  0^+             
                                          
\end{array}\right.
\]
with the understanding that the eigenvalues for the contracted graph
are taken with respect to the natural inner product $S$. This shows that 
there is an (approximate) contraction-deletion relation at the level of the 
spectrum analogous to the contraction-deletion relation satisfied by the crossing polynomial. 
\end{remark}

\begin{example}\label{exa:asymptotics}
We consider the following graph, where all positive (solid) edges are
weighted $+1$ and all negative (dashed) edges weighted $-1$:
\begin{figure}[ht]
\begin{centering}
\includegraphics[width=2in]{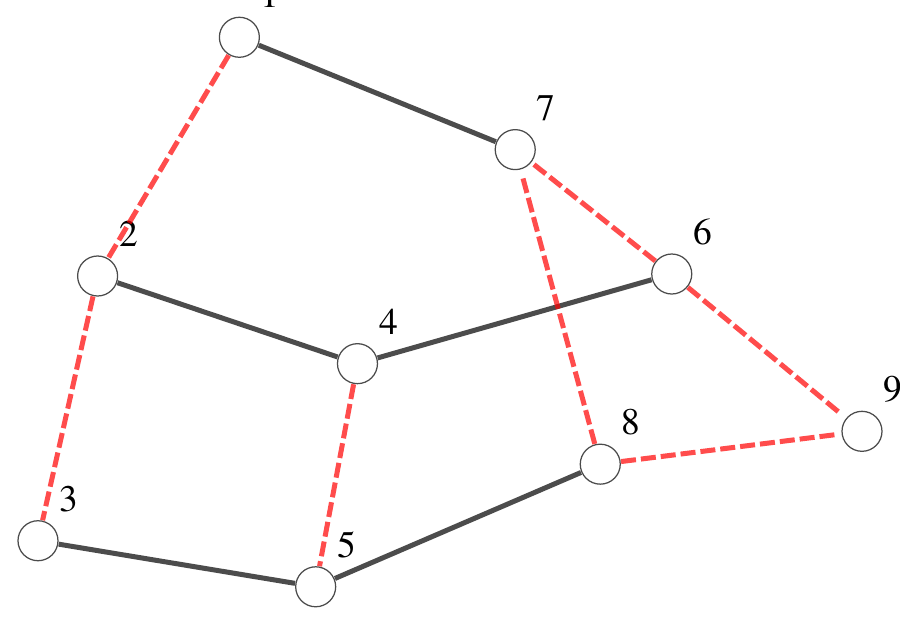}
\includegraphics[width=2in]{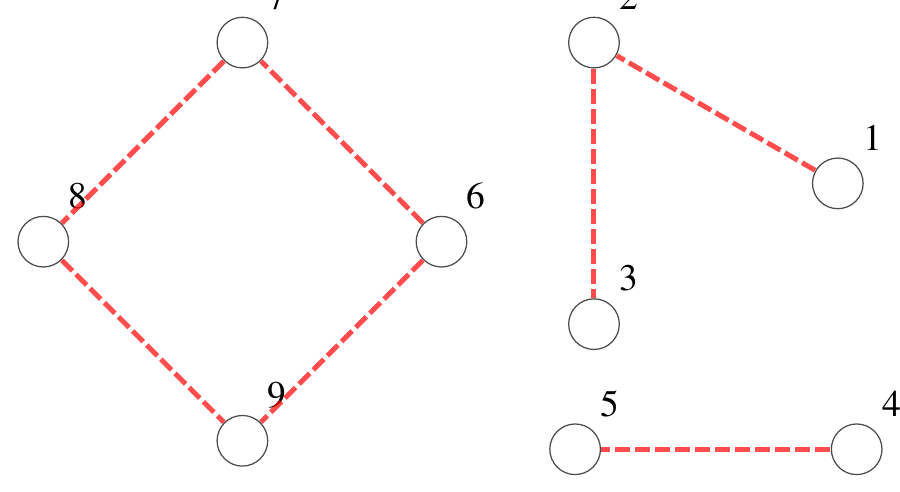} 
\includegraphics[width=1in]{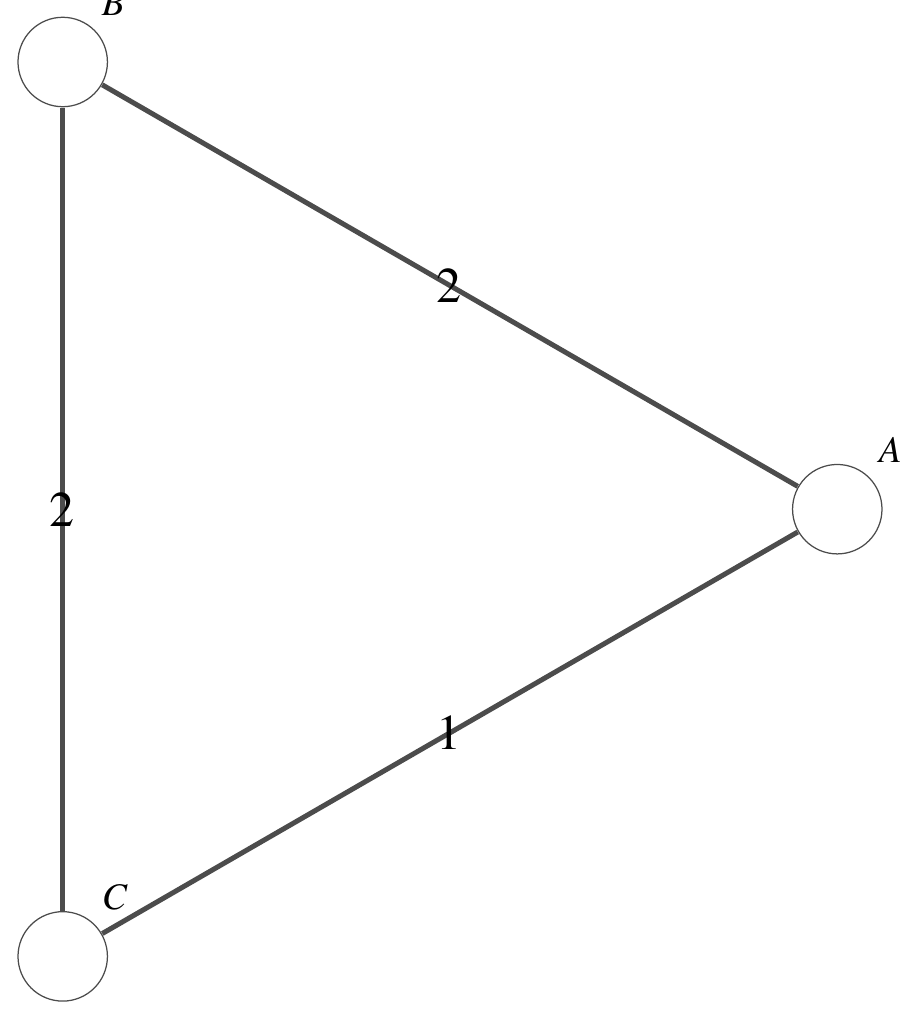}
\caption{The graph $\G$; the graph $\Gm$; the graph $\G_{\cdot-}$}
\label{fig:asymptoticGraphs}
\end{centering}
\end{figure}
The full graph has five positive edges, seven negative edges and nine
vertices. The subgraph $\Gamma_-$ consisting of only the negative
links has three components. Component A consists of vertices 1,2 and 3
and connecting edges , component B consists of vertices 4 and 5 and
the connecting edge, and Component C consists of vertices 6, 7, 8, and
9 and connecting edges. There are $N - c(\Gm)=9-3=6$ non-zero
eigenvalues corresponding to the graph Laplacian associated with the
negative edges. The non-zero eigenvalues associated to component A are
$1$ and $3$, to component B is $2$ and to component C are $4$, $2$ and
$2$. This gives six eigenvalues that grow linearly:
\begin{eqnarray*}
\lambda_9 \approx 4 t + O(1) \\
\lambda_8 \approx 3 t + O(1) \\
\lambda_7 \approx 2 t + O(1) \\
\lambda_6 \approx 2 t + O(1) \\
\lambda_5 \approx 2 t + O(1) \\
\lambda_4 \approx 1 t + O(1) \\
\end{eqnarray*}
If one contracts on all of the dashed edges the full graph reduces to
the three cycle, with one vertex corresponding to each component of
$\Gm$. There are two edges between components A and B, two between
components B and C, and one between A and C, so these edges are
weighted accordingly. The norm is contracted as well, and the norm
over the new vertex space can be written as
\[
\Vert \vec v \Vert^2 = \vec v^t S \vec v
\]
where $S$ is the matrix 
\[
S =  \left(\begin{array}{ccc}3 & 0 & 0 \\ 0 & 2 & 0 \\ 0 & 0 & 4\end{array}\right)
\]
The diagonal entries reflect the fact that the components have three, two and 
four vertices respectively. Thus the eigenvalue problem becomes  
\[
\left(\begin{array}{ccc}-3 & 2 & 1 \\ 2 & -4 & 2 \\ 1 & 2 & -3\end{array}\right)\vec v = \lambda \left(\begin{array}{ccc}3 & 0 & 0 \\ 0 & 2 & 0 \\ 0 & 0 & 4\end{array}\right)\vec v
\]
giving the negative eigenvalues as 
\begin{eqnarray*}
\lambda_2(t) \approx \frac18\left(\sqrt{33}-15\right) + O({1}/{t})\\
\lambda_1(t) \approx \frac18\left(-\sqrt{33}-15\right) + O({1}/{t})
\end{eqnarray*}
Finally the flexibility is equal to $\tau(\G) = 10 - 3 - 4 = 3$, so there are 
three eigenvalue crossings. It is straightforward though tedious to compute that 
the crossing polynomial is given by $P_\Gamma(t) = 171 t^3 - 702 t^4 + 828 t^5 - 288 t^6.$
The non-zero roots occur at $t \approx .43, t \approx .90, t \approx 1.55$.

 Some numerical results are shown in Figure {}. The first plot shows a graph of $\lambda_i(t)/t$ for $i = 3 \ldots 9$ and  $t\in (0,8).$ It seems clear 
that the scaled eigenvalues are converging to the correct values. The second 
plot shows a plot of the (unscaled) negative eigenvalues for  $t\in (0,8).$ Again it is clear that they are converging 
to $-\frac{1}{8}(\sqrt{33}+15)$ and  $-\frac{1}{8}(15-\sqrt{33})$ respectively. One can also see that there are three 
eigenvalue crossings at the correct $t$ values.  
\begin{figure}[ht]
\begin{centering}
\includegraphics[width=2in]{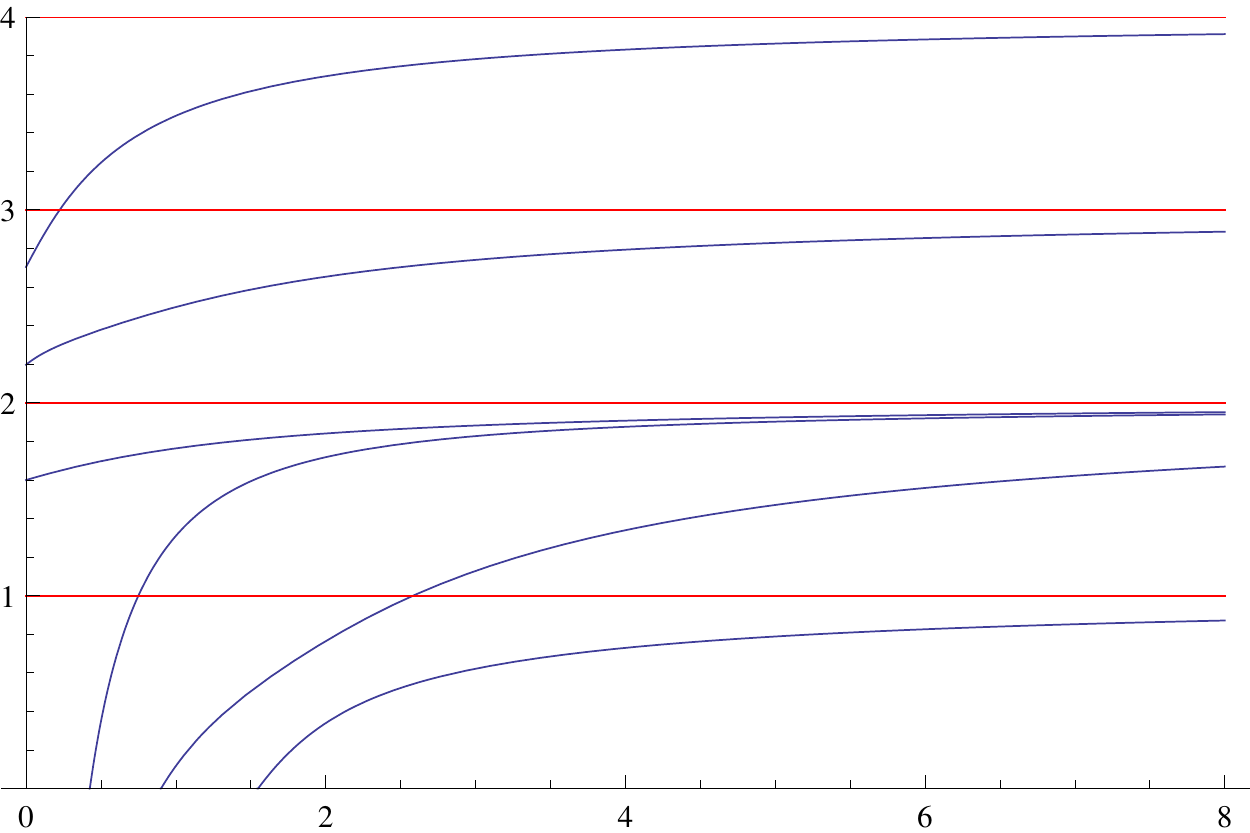} \includegraphics[width=2in]{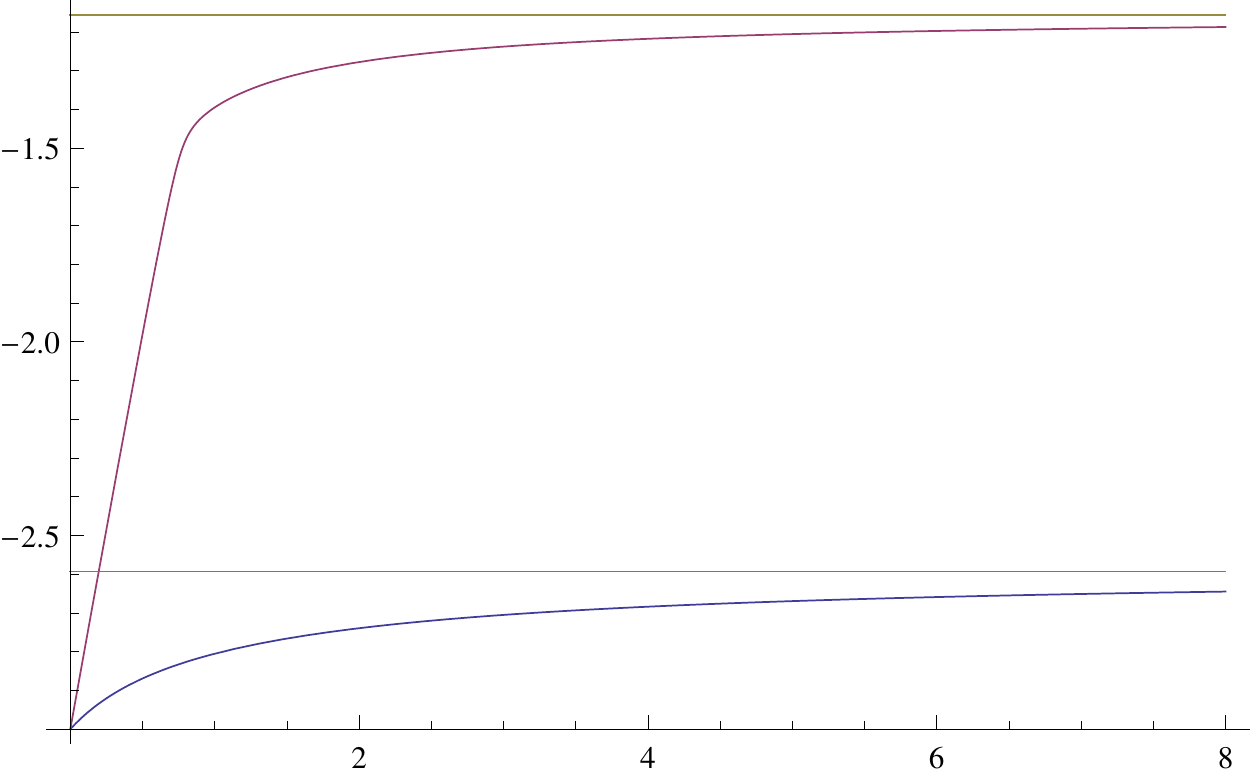} 
\caption{Eigenvalues of $\G(t)$ as a function of $t$, where $\G(t)$ is
  defined in Example~\ref{exa:asymptotics}.}
\label{fig:asymptotics}
\end{centering}
\end{figure}
\end{example}

\subsection{Comparison to the Gershgorin Theorem}\label{sec:Gershgorin}

In applied mathematics and numerical analysis it is often necessary to estimate 
the locations of the eigenvalues of a matrix or linear operator. One simple 
and very widely used tool for this is the Gershgorin 
disc theorem, which says the following: 
\begin{thm}[Gershgorin]
Given a matrix $M$ with entries $M_{ij}$ define the following $n$ closed disks:
\[
 D_i = \{ z | |z-M_{ii}|\leq \sum_{j \neq i} |M_{ij}|\}.
\]
 Then the eigenvalues of $M$ lie in the union of the disks, ${\rm spec}(M) \in \cup_{i=1}^n D_i.$
\end{thm}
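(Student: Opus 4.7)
The plan is to prove Gershgorin's disc theorem by the classical ``maximal component'' argument applied to an eigenvector of $M$. First I would fix an arbitrary eigenvalue $\lambda$ of $M$ with associated eigenvector $v\in\C^n\setminus\{0\}$, and choose an index $i$ such that $|v_i| = \max_j |v_j|$. Since $v$ is nonzero, $|v_i|>0$, so division by $v_i$ is legitimate in what follows.

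Next I would write out the $i^{\mathrm{th}}$ row of the eigenvalue equation $Mv = \lambda v$ and isolate the diagonal contribution:
\[
(\lambda - M_{ii}) v_i \;=\; \sum_{j\neq i} M_{ij} v_j.
\]
Taking absolute values, applying the triangle inequality, and dividing through by $|v_i|$ gives
\[
|\lambda - M_{ii}| \;\le\; \sum_{j\neq i} |M_{ij}|\,\frac{|v_j|}{|v_i|} \;\le\; \sum_{j\neq i} |M_{ij}|,
\]
where the last step uses the maximality of $|v_i|$, so that $|v_j|/|v_i|\le 1$ for every $j$. This is precisely the statement that $\lambda \in D_i$, and therefore $\lambda \in \bigcup_{k=1}^n D_k$. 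Since $\lambda$ was an arbitrary eigenvalue, the proof is complete.

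I do not anticipate any real obstacle: the argument is essentially a one-line application of the triangle inequality once the ``largest coordinate'' trick is identified. The only thing worth remarking on is that the disc index $i$ depends on $\lambda$ (and on the particular eigenvector chosen when $\lambda$ is degenerate), so the conclusion is merely that each eigenvalue lies in \emph{some} disc $D_i$, not that every disc must contain an eigenvalue. This matches exactly the statement of the theorem as given.
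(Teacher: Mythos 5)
Your proof is correct and is the standard ``maximal component'' argument for Gershgorin's theorem; the paper itself gives no proof of this statement, simply quoting it as a classical result in order to compare its disc bounds against the topological bounds of the main theorem. There is nothing to reconcile: your argument is complete, and your closing remark that the disc index depends on the eigenvalue (so only membership in the union is asserted) is the right caveat.
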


The fact that the disks are defined using the edge weights might make
one think that the Gershgorin theorem might give more information on
the signs of the eigenvalues than the purely topological arguments,
but this is not, in fact, the case.  It is not difficult to see that,
for a non-trivial signed Laplacian the Gershgorin theorem always gives
results that are strictly worse than those given by Theorem
\ref{thm:main}.

First note that if the matrix $M$ represents a graph Laplacian then
there are three kinds of discs. If all of the edges emanating from the
vertex are all positive (resp. all negative) then the associated disc
lies in the closed left half-plane (resp. right half-plane) and is
tangent to the origin and the corresponding eigenvalue is non-positive
(resp. non-negative). If the vertex has edges of both signs then the
origin lies in the interior of the disk and the sign of the eigenvalue
cannot be determined. It is these discs that correspond to eigenvalues
whose sign cannot be determined. The main observation in this section
is that the number of such discs is always strictly larger than
$\tau.$

\begin{prop}
Suppose the graph $\G$ is connected and contains edges of both signs.
Let $n$ be the number of Gershgorin discs for which the origin lies in
the interior. Then $n \ge \tau+1$.
\end{prop}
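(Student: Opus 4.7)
The plan is to reduce the problem to a purely combinatorial statement about when a Gershgorin disc contains the origin in its interior, and then carry out a component-counting argument. First I would check that the $i$-th Gershgorin disc is centered at $\L_{ii} = -\sum_{j\neq i}\gamma_{ij}$ with radius $\sum_{j\neq i}|\gamma_{ij}|$, so the origin lies on the boundary or outside the disc iff the triangle inequality $|\sum_j \gamma_{ij}| \leq \sum_j |\gamma_{ij}|$ is an equality; this happens iff all incident edge weights share a common sign. Call a vertex \emph{mixed} if it has both positively and negatively weighted edges incident, so that $n$ equals the number of mixed vertices.

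Second, I would partition the vertex set as $V(\G) = V_+ \sqcup V_- \sqcup V_{\mathrm{mix}}$, where $V_\pm$ consists of those vertices whose incident edges are all of sign $\pm$, and then perform a component count on $\Gp$ and $\Gm$. A vertex in $V_-$ has no incident positive edges, hence is an isolated vertex of $\Gp$; conversely, every vertex of $V_+ \cup V_{\mathrm{mix}}$ is incident to at least one positive edge, so lies in a component of $\Gp$ of size at least two. Letting $c''(\Gp)$ denote the number of non-singleton components of $\Gp$, this gives the identity $c(\Gp) = |V_-| + c''(\Gp)$, and symmetrically $c(\Gm) = |V_+| + c''(\Gm)$.

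Third, substituting these identities into $\tau + 1 = N + 2 - c(\Gp) - c(\Gm)$ and using $|V_+| + |V_-| + n = N$ collapses the expression to
\begin{equation*}
\tau + 1 \;=\; n + 2 - c''(\Gp) - c''(\Gm),
\end{equation*}
so the desired inequality $n \geq \tau + 1$ is equivalent to $c''(\Gp) + c''(\Gm) \geq 2$. The hypothesis that $\G$ contains edges of both signs guarantees $\Gp$ has at least one edge (so $c''(\Gp) \geq 1$) and likewise $c''(\Gm) \geq 1$, closing the argument.

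There is no real obstacle in this proof; it is essentially a bookkeeping exercise. The only point that needs care is the identification $c(\Gp) = |V_-| + c''(\Gp)$: one must check that vertices in $V_+ \cup V_{\mathrm{mix}}$ are never isolated in $\Gp$ (immediate from their definition), so that the isolated vertices of $\Gp$ are exactly $V_-$. Note that connectedness of $\G$ is used only implicitly through the form of $\tau$; the argument does not otherwise invoke it.
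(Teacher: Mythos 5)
Your proof is correct, but it takes a genuinely different route from the paper's. The paper deduces the bound from its homological characterization of $\tau$: a basis of $\tau$ mixed cycles produces $\tau$ linearly independent vectors $v^\gamma$, each supported only on vertices incident to edges of both signs and each orthogonal to $(1,1,\ldots,1)$, so these vectors live in an $(n-1)$-dimensional subspace and $\tau\le n-1$. You instead argue by direct component counting: the isolated vertices of $\Gp$ are exactly the all-negative vertices and vice versa, which yields the exact identity $n = \tau - 1 + c''(\Gp) + c''(\Gm)$, and the bound follows because each of $\Gp,\Gm$ contains an edge and hence a non-singleton component. Your route is more elementary --- it does not invoke the Mayer--Vietoris lemma at all --- and it is sharper, since the identity pins down the defect $n-(\tau+1)$ exactly and characterizes equality (each of $\Gp$ and $\Gm$ has exactly one non-singleton component), which subsumes both examples in the paper's subsequent remark. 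What the paper's argument buys instead is the conceptual link between the Gershgorin count and the mixed-cycle interpretation of $\tau$. One small caveat on your closing comment: connectedness is used not only through the form of $\tau$ but also to guarantee that every vertex is incident to at least one edge, without which the partition $V = V_+\sqcup V_-\sqcup V_{\mathrm{mix}}$ and the identification of the isolated vertices of $\Gp$ with $V_-$ would need amending.
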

\begin{proof}
 This follows immediately from the topological characterization of the
 number of of modes for which there is an eigenvalue crossing. The
 results of the previous section show that these can be associated
 with mixed cycles. By the construction given in that section we
 associate to each of these cycles a vector that has $+1$ for each
 time the cycle enters a vertex from a positive edge and leaves it by
 a negative edge, and $-1$ for each time the cycle enters a vertex
 from a negative edge and leaves it by a positive edge. These vectors
 can only have non-zero entries in vertices which have both types of
 edge, so they obviously lie in a subspace isomorphic to ${\mathbb
   R}^n$. These vectors are necessarily orthogonal to
 $(1,1,1,\ldots,1)$, so there can be at most $n-1$ linearly
 independent ones.
\end{proof}

\begin{remark}
 Note that $n=\tau+1$ can be achieved --- one example is when
 $c(\Gp)=1$ and $c(\Gm)=1$, when $\tau = N-1$ and $n=N$. It can also
 happen that $n$ is much larger than $\tau$. Consider, for example, an
 even cycle with edges of alternating sign. In this case $n=N$ --- all
 vertices have edges of both type --- while $\tau=1$ since there is
 only one linearly independent loop.
\end{remark}
   
\begin{example}
  The graph depicted in Figure~\ref{fig:asymptoticGraphs} has a
  flexibility of $\tau = 9 + 1 - 4 -3=3$.  This graph has eight
  vertices which have edges of both signs, and thus eight Gershgorin
  discs that contain the origin in the interior.

In the second graph in figure \ref{fig:motivate} there are three
vertices that have edges of both types, and thus three Gershgorin
discs that contain the origin as an interior point. The flexibility of
this graph, however, is zero so that the number of positive, negative
and zero eigenvalues is fixed and does not vary with the edge weights.

\end{example}

\section{Applications and numerical computations}\label{sec:apps}

\subsection{Random graphs and bifurcations}\label{sec:random}
\begin{figure}[ht]
\begin{centering}
  \includegraphics[width=5in]{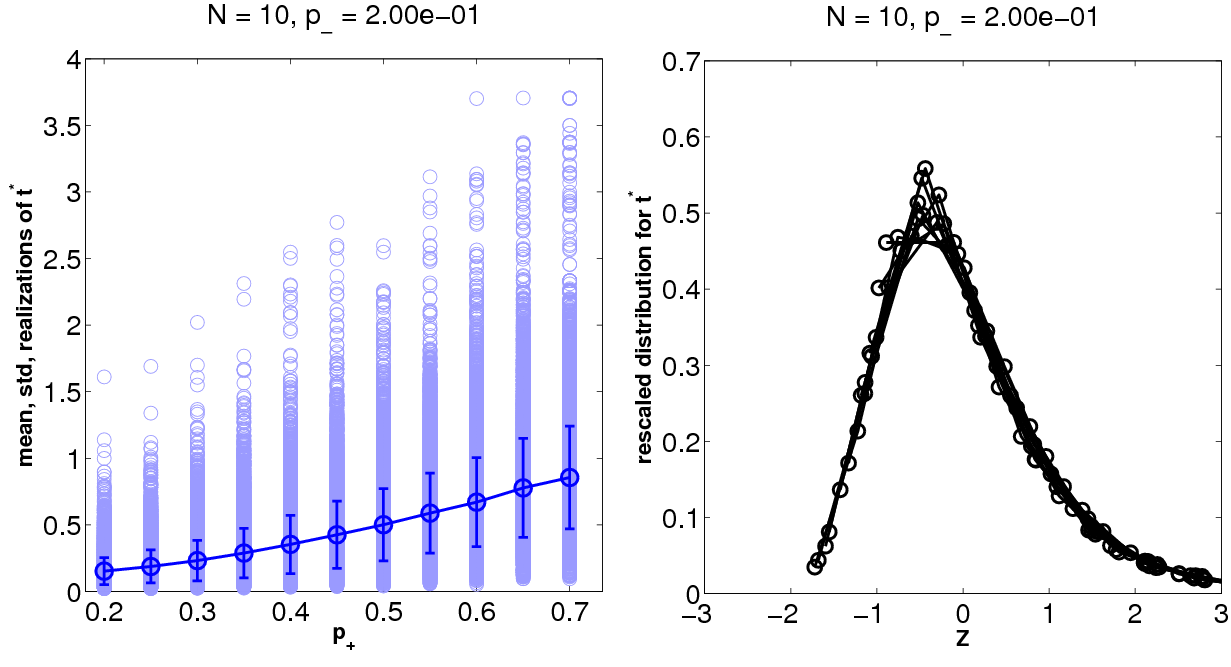}
  \caption{Each of the rows corresponds to a fixed $N$; the top row is
    $N=10$ and the bottom row is $N=50$.  In each row, in the left
    panel, we plot in solid blue the mean and standard deviation for
    ensembles of random matrices where we vary $p_+$, and in light
    blue, we plot the value of $\tstar$ for each individual matrix.
    In the right panel, we have plotted the distributions for all
    values of $p_+$, but rescaled to have mean zero and variance one.
    For the case of $N=50$, we compare these rescaled distributions to
    the standard Gaussian.}
  \label{fig:randomtstar10}
\end{centering}
\end{figure}

\begin{figure}[ht]
\begin{centering}
  \includegraphics[width=5in]{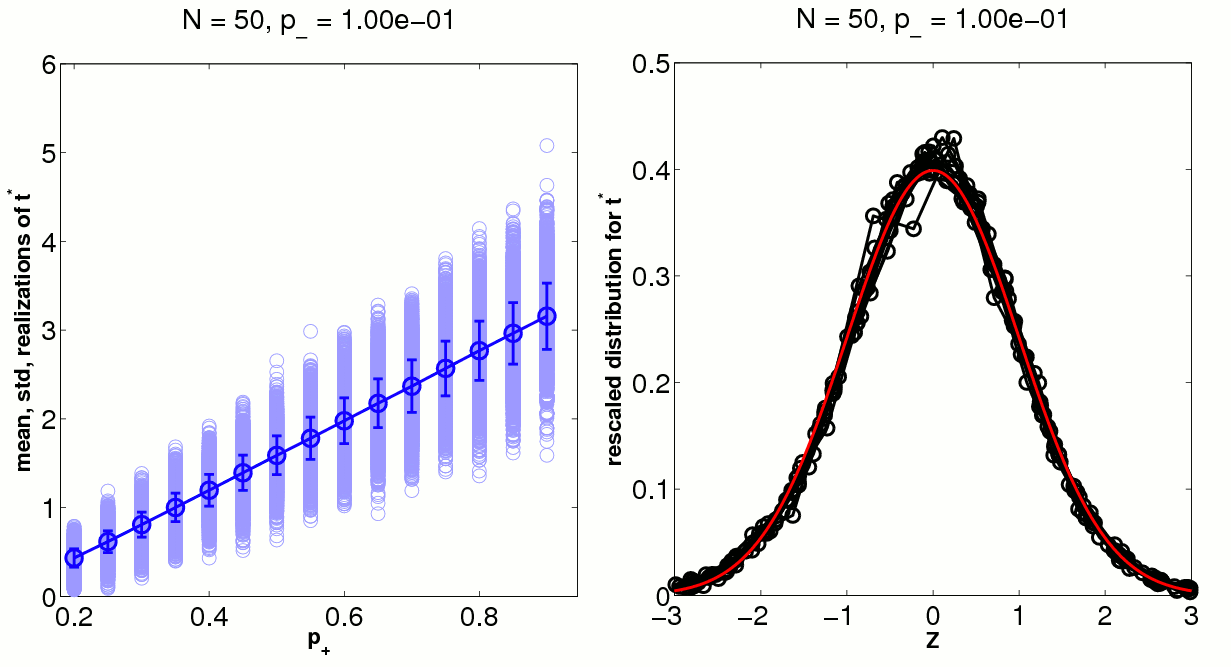}
  \caption{In the left panel, in solid blue we plot the mean and
    standard deviation for an ensemble of random matrices with a given
    $p_+$, $p_- = 0.1$, and $N=50$, versus $p_+$.  In light blue, we
    plot the value of $\tstar$ for each individual matrix.  In the
    right, we have plotted the distributions for all values of $p_+$,
    but rescaled to have mean zero and variance one.  We compare with
    the Gaussian in red.}
  \label{fig:randomtstar50}
\end{centering}
\end{figure}

It was shown in Section~\ref{sec:dct} that for any signed $\Gamma$,
\begin{equation*}
  \tstar := \sup_{t\ge 0} \{t: n_+(\G(t)) = 0\}
\end{equation*}
satisfies $\tstar\in[0,\infty]$, and that $\tstar = 0$ if and only if
$\Gp$ is not connected, and $\tstar = \infty$ if and only if $\Gm =
0$.  Thus we can think of $\tstar$ as a map from the set of finite
graphs to $[0,\infty]$.  For any collection $\mathcal{G}$ of graphs
and a probability measure $\mathbb{P}$ on $\mathcal{G}$, this induces
a random variable $T\colon\mathcal{G}\to[0,\infty]$.

We present some numerically-computed distributions in
Figure~\ref{fig:randomtstar10} and~\ref{fig:randomtstar50} below.  The
random ensemble of matrices is a signed generalization of the
classical Erd\H{o}s--R\'{e}nyi random graph $G(n,p)$ whose specific
definition is given as follows: we fix $N$ and $p_\pm \in[0,1]$.  For
each $1\le i<j\le N$, choose $X_{ij}, Y_{ij}$ independently, with
\begin{equation*}
  \P(X_{ij} = 1) = p_+, \quad \P(X_{ij} = 0) = 1-p_+, \quad \P(Y_{ij} = 1) = p_-, \quad \P(Y_{ij} = 0) = 1-p_-,
\end{equation*}
and we set $\gamma_{ij} = X_{ij} - Y_{ij}$.  From this, we have
\begin{equation*}
  \P(\gamma_{ij} = +1) = p_+(1-p_-),\quad \P(\gamma_{ij} = -1) = p_-(1-p_+),
\end{equation*}
and $\gamma_{ij}= 0$ otherwise, and of course the $\gamma_{ij}$ are
independent as well.  We then set $\gamma_{ij}$ with $i>j$ by
symmetry.  This gives a random distribution on the set of symmetric
graphs with $N$ vertices.  We condition on $\Gp$ being connected and
$\Gm\neq 0$, and then compute the distribution of $\tstar$ over this
ensemble.

We performed a series of numerical experiments on these random
variables, both for $N=10$ (``small matrices'') and $N=50$ (``large
matrices''), the numbers 10 and 50 being chosen arbitrarily.  In each
of these cases, we fixed $p_-$ for all simulations (we chose $p_- =
0.20$ for $N=10$ and $p_- = 0.10$ for $N=50$) and varied $p_+$ over a
range.  For each choice of $p_\pm$, we chose a random ensemble of
$10^4$ matrices using the rules above, and computed $t^*$ for each of
the matrices in the ensemble using a bisection method.  We present the
findings in Figures~\ref{fig:randomtstar10}
and~\ref{fig:randomtstar50}.  We first observe that $t^*$ tends to
increase as $p_+$ increases, which makes sense: adding more positive
edges will make the Laplacian more stable, and higher $p_+$ values
tend to give more positive edges.  What is perhaps surprising is that
the ensemble mean is very close to a linear function of $p_+$ (at
least for a certain range for $N=10$, and for all $p_+$ for $N=50$).
Moreover, we find that there seems to be a universal scaling
distribution for the different values of $p_+$; in each case, what we
do is consider the distribution of $t^*$ for each choice of $p_\pm$,
then normalize this distribution to have mean zero and variance one,
and plot these on top of each other.  For the small ($N=10$) case, the
rescaled distributions overlap well and resemble a lognormal plot; for
the $N=50$ case, the rescaled distributions overlap well and resemble
a normal plot.

To make these observations more precise, in each case we chose $p_+ =
0.45$ and made a QQ-plot of the ensemble distribution versus either a
lognormal (for $N=10$) or a normal (for $N=50$), and these match quite
well.  See Figure~\ref{fig:qq}.

\begin{figure}[ht]
\begin{centering}
  \includegraphics[width=5in]{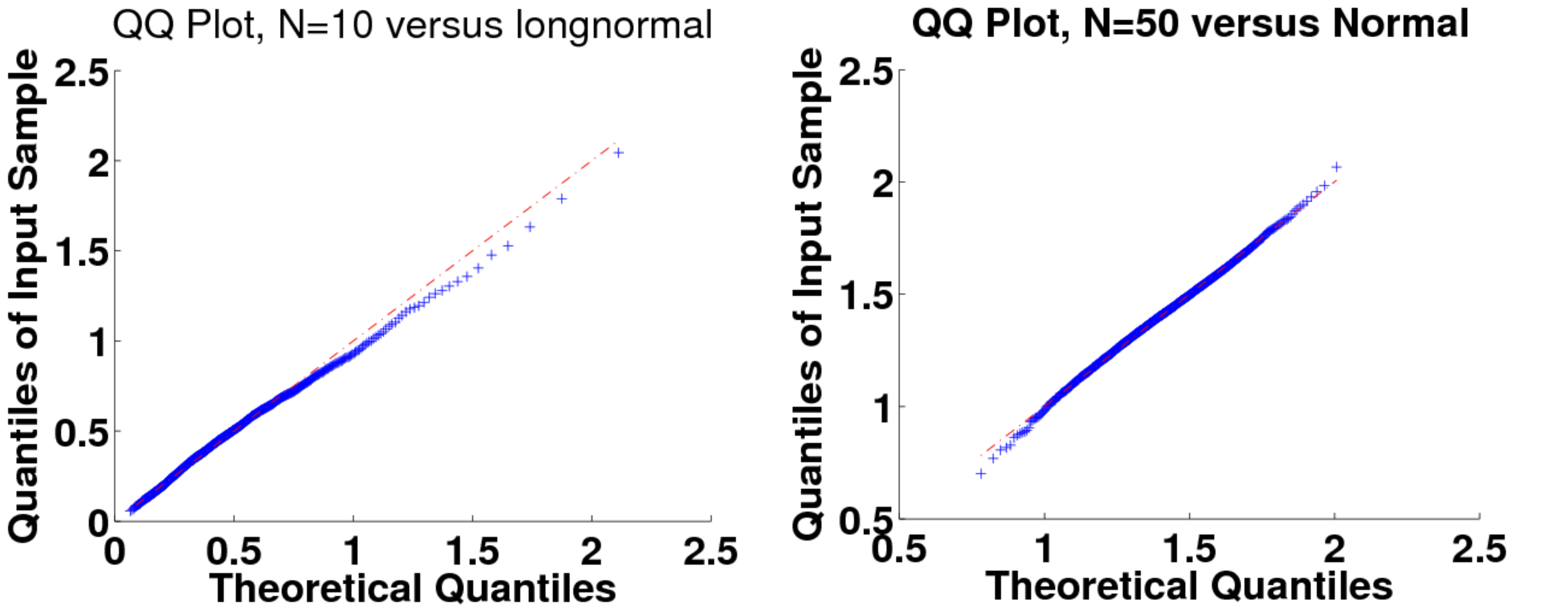}
  \caption{QQ-plots of ensemble distributions.  In the left frame, we
    are plotting a QQ-plot of the ensemble distribution $N=10$, $p_- =
    0.20$, $p_+ = 0.45$ versus a lognormal; in the right frame, we
    plot the ensemble distribution for $N=50, p_- = 0.10, p_+ = 0.45$
    versus a normal.}
  \label{fig:qq}
\end{centering}
\end{figure}

\subsection{Feuds in social networks}\label{sec:social}

We consider two datasets from social networks and compute the
flexibility of the graphs and, in one case, the bifurcations.  Our
computations were facilitated by the {\texttt{matlab\_bgl}}
library~\footnote{\url{http://www.mathworks.com/matlabcentral/fileexchange/10922}}.

The first dataset we consider is from Read~\cite{Read.54}, and
represents sympathetic and antagonistic relationships amongst sixteen
sub-tribes of the Gahuku-Gama people in the highlands of New Guinea:
Gaveve, Kotuni,Gama, Nagamidzhuha, Seu've, Kohika, Notohana, Uheto,
Nagamiza, Masilakidzuha, Asarodzuha, Gahuku, Gehamo, Ove, Ukudzuha and
Alikadzuha.  Warfare between subtribes in this society was very
common. While temporary alliances are common this dataset represents
traditional relationships between subtribes. The positive, or hina,
edges represent subtribes that are traditionally close
politically. Warfare between these subtribes occurs but is limited. It
is understood to be short term condition and is often resolved by
payment of blood money or other concessions. The negative, or rova,
edges represent relations between subtribes that are traditionally
antagonistic. Warfare between these subtribes is much less
constrained.  This has become a somewhat popular data set to analyze:
see the pioneering work of Hage and Harary~\cite{Hage.Harary.book} and
the recent work of Kunegis et. al~\cite{Kunegis2010SAO}.  The graph
has $c(\Gp)=2$ and $c(\Gm)=3$, giving $\tau = 12$. The first component
(A) of $\Gp$ consists of a collection of four tribes (Gaveve,
Kotuni,Gama, Nagamidzhuha; here numbered 1-4) all of whom have
friendly relations. The second component (B) consists of the remaining
twelve tribes (Seu've, Kohika, Notohana, Uheto, Nagamiza,
Masilakidzuha, Asarodzuha, Gahuku, Gehamo, Ove, Ukudzuha and
Alikadzuha; here numbered 5-16), all of whom are connected by at least
one chain of sympathetic relationships.  Relations are more
complicated within component (B) than in component A but there are two
main features to be noted. The first is that removing the
Masilakidzuha (tribe number 10) splits this component into two
subcomponents, the first subcomponent (B1) consisting of the Seu've,
Kohika, Notohana, Uheto, Nagamiza and the second subcomponent (B2)
consisting of the Asarodzuha, Gahuku, Gehamo, Ove, Ukudzuha and
Alikadzuha. There are numerous antagonistic relationships between
tribes in subcomponents B1 and subcomponent B2 but there are no
antagonistic relationships within subcomponents B1 or B2. This
suggests the possibility of a rift developing within component B.

The second dataset is from the Slashdot Zoo.
Slashdot~\footnote{\url{http://www.slashdot.com}} is a user-run
website where links are submitted and voted on by the userbase, and,
furthermore, users can make comments on the links and these comments
are also voted on by the individual users.  A significant amount of
discussion occurs on this website, sometimes friendly and sometimes
fractious, and thus it is not hard to imagine that connections, both
positive and negative, form between users.  Each user is allowed to
tag other users in the database as a ``fan'' or a ``foe'', i.e. if
user $i$ likes the types of comments made by user $j$, user $i$ has
the possibility to tag user $j$ and become a ``fan''.  If this occurs,
then in the Slashdot
Zoo~\footnote{\url{http://konect.uni-koblenz.de/networks/slashdot-zoo},
  dump of userbase, May 2009} database, the edge $(i,j)$ is given
weight $+1$.  Similarly, user $i$ can be a ``foe'' of user $j$ and
this adds a $-1$ on edge $ (i,j)$.  This dataset contained connection
data on 82,144 users, with 549,202 edges in the graph. This network is
not {\em a priori} symmetric, since the fan/foe operations have
directionality, so we imposed symmetry.  Given users $i$ and $j$, if
$i$ and $j$ are both fans of each other, or $i$ is a fan to $j$ and
$j$ is neutral to $i$, then we placed a $+1$ in edge $(i,j)$.
Similarly for foes, if $i$ and $j$ are both foes, or $i$ is a foe to
$j$ and $j$ is neutral to $i$, then we placed a $-1$ in edge $(i,j)$.
In short, we allowed simply extended unidirectional relationships to
be bidirectional as long as the other direction was neutral.  The only
case to think about is what one should do if the two directions are of
opposite sign, i.e. if $i$ was a fan of $j$ and $j$ a foe of $i$; in
this case we decided to assume that the relationship canceled and
placed a 0 on edge $(i,j)$.  As one can imagine, this is relatively
rare, and this happened only 1,949 times in this dataset.

Thus, in both cases, we are considering a network with positive and
negative weights.  Since it has a smaller vertex set, the PNG dataset
was easier to analyze.  We present this data in Figure~\ref{fig:png}.
Since $c(\Gp) = 2$, and $\tau = 12$ the results of Theorem~\ref{thm:main}
imply that we will have  we will have
\begin{equation*}
  \lim_{t\to0^+} n_+(\G(t)) = 1,\quad   \lim_{t\to\infty} n_+(\G(t)) = 13.
\end{equation*}
A symbolic computation using Mathematica gives the crossing polynomial as 
\begin{align*}
  {\mathcal M}(t) &= -45432223 t^{13}+657635624 t^{12}-4187415940 t^{11}+15505043366
  t^{10}-37159886129 t^9\\
  &\quad+60647687776 t^8-68960526571 t^7+54844706645
  t^6-30103762121 t^5+11015925656 t^4\\&\quad-2508107376 t^3+308319872 t^2-14192640
  t.
\end{align*}
The twelve positive roots of the crossing polynomial range from $t\approx.1$ 
to $t\approx 3$. For small positive $t$ we have one positive eigenvalue. 
This, of course, represents the mutual antagonism between the subtribes in component A
and the subtribes in component B. The first bifurcation occurs around $t \approx .1$ when a 
secondary instability develops. Intuitively one expects that this corresponds to 
a split in component $B$, with antagonism between sub-components B1 and B2. 
This is confirmed by the numerics. If we compute the spectrum of the Laplacian 
at the first bifurcation point  $t\approx .988$ there is a single positive 
eigenvalue with eigenvector 
\[
 \vec v = (-.43,-.43,-.44,-.43,.14,.16,.16,.15,.16,.13,.16,.15,.14,.14,.13,.12)^t.
\]
This obviously represents the mutual aggression between components A and B.  
There is a second linearly independent vector in the kernel of the Laplacian 
representing the emerging instability. This eigenvector is given by 
\[
\vec v = (.03, -.002 , .01 , -.003 , .34 , .42 , .38
 , .30 , .22 ,  -.12 , -.27 , -.29 , -.29 , -.24 , -.24 , -.25)^t.
\]
We see that for this mode component $A$ is only weakly involved - component A has only about 3\% of the mass of the 
eigenvector - and the eigenvector clearly describes a rift in component B. The tribes in subcomponent 
B1 (subtribes 5-9: Kohika, Notohana, Seu've, Uheto and Nagamiza) move in one direction and the tribes in subcomponent 
B2 (subtribes 11-16: Asarodzuha, Ove, Gahuku, Gehamo, Ukudzuha and Alikadzuha) move in the opposing
direction. The Masilakidzuha (subtribe 10) have somewhat stronger ties to subcomponent B2 than to subcomponent B1 
(five sympathetic relationships with tribes of B2 vs. two sympathetic relationships with tribes in B1) 
and so move with subcomponent B2. 

The next bifurcation, which occurs around $t\approx .57$, describes a somewhat less obvious conflict. 
The associated null-vector at the bifurcation point is 
\[
 \vec v = (0.44, 0.29, -0.26, -0.37, -0.05, 0.16, 
-0.002, 0.19, -0.23, 0.024, 0.25, -0.04, 
0.32, -0.29, -0.022, -0.40)^t
\]
which describes a conflict with subtribes 1,2,6,8,11 and 13 (Gaveve, Kotuni, Notohana, Uheto, Asarodzuha and Gahuku) forming one 
faction and subtribes 3,4,9,14 and 16 (Gama, Nagamidzhuha, Nagamiza, Gehamo and Alikadzuha) forming another, with much smaller 
involvement of the remaining tribes.
As $t$ increases we see additional eigenvalue crossings, leading to additional conflicts, up to the maximum of thirteen 
unstable modes for $t\approx 3$.  Moreover, if we assume that friend and
foe links are of equal strength, then we have $n_+(\G) = n_+(\G(1)) =
7$. 

\begin{figure}[ht]
\begin{centering}
  \includegraphics[width=5in]{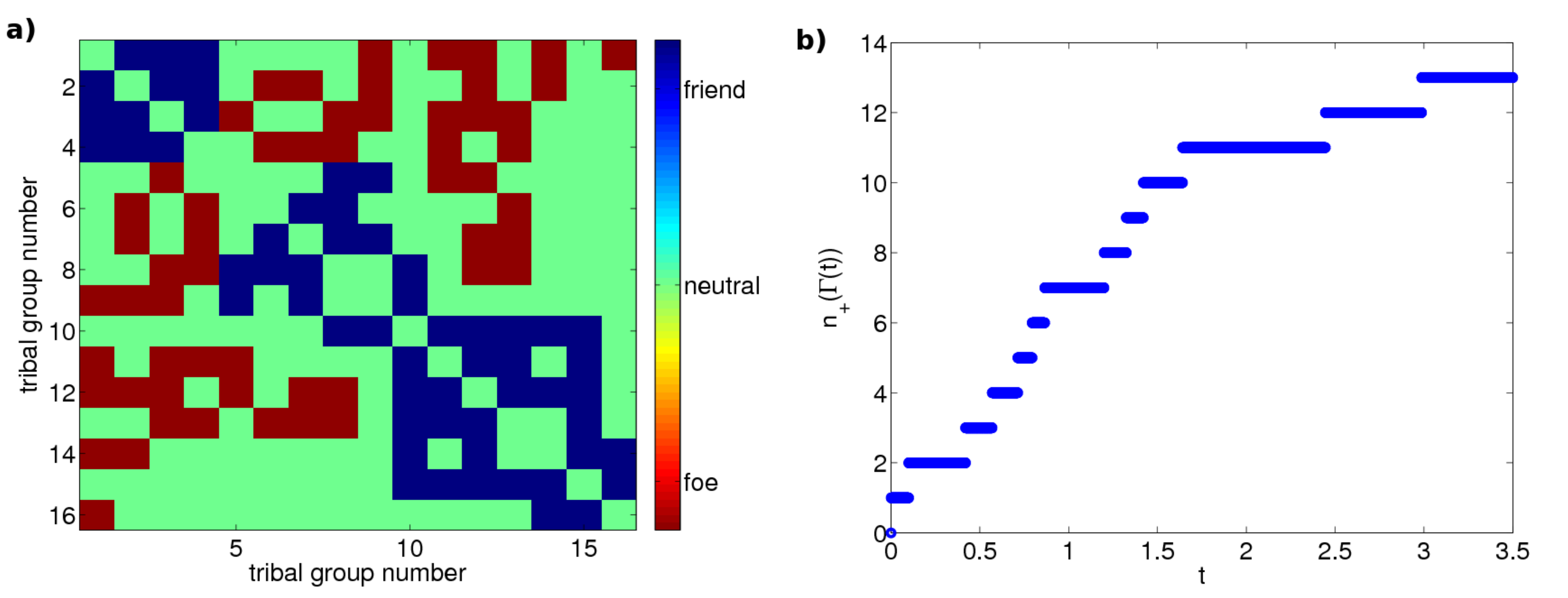}
  \caption{The PNG dataset from~\cite{Read.54}.  In frame (a), we are
    plotting the interaction matrix amongst the 16 tribes.  Blue
    pixels correspond to entries in $\Gp$, red to entries in $\Gm$.
    In frame (b), we plot $n_+(\G(t))$ as a function of $t$; we know
    that this is positive for all $t>0$, since $c(\Gp)>0$, and we see
    that it increases to its maximum value around $t=3.0$. }
  \label{fig:png}
\end{centering}
\end{figure}

For the Slashdot Zoo data set, we concentrated on the component of
the Slashdot database that is friendly to CmdrTaco, the founder of the
site and user number 1.  More specifically, we considered only those
users for which there existed a ``friendly path'' from that user to
CmdrTaco.  This subset was, not surprisingly, the largest connected
component of the full network, and contains 23,514 users.  We then
considered the subgraph of these 23,514 users to itself, and this is
what we define $\G$ to be.  This graph contains 415,118 positive edges
and 117,024 negative edges, and we computed components.  By
definition, $c(\Gp) = 1$, and we compute that $c(\Gm) =$ 10,327,
giving $\G$ a flexibility index of 13,187.  This is, admittedly, a
one-off calculation, but we conjecture that data from social networks
will show this pattern, that even amongst a group of ``friends'', or
common ``fans'' of a particular user, there will be a large number of
instabilities in exactly this manner.



\section*{Acknowledgments}

The authors would like to thank Alejandro Dom\'{\i}nguez-Garc\'{\i}a, 
Susan Tolman and Renato Mirollo for comments and suggestions that improved this work.
JCB was supported in part by NSF grant DMS--1211364 and by a Simons Foundation fellowship.
LD was supported in part by NSF grant CMG--0934491. JCB would also like to thank the 
Mathematics department at MIT and the Applied Mathematics department at Brown for their hospitality
during part of this work. 

\bibliographystyle{plain} \bibliography{Signed,Kuramoto}

\end{document}